\numberwithin{equation}{section}
\newtheorem{thm}{Theorem}[section]
\newtheorem{prop}[thm]{Proposition}
\newtheorem{lem}[thm]{Lemma}
\newtheorem{cor}[thm]{Corollary}
\newtheorem{q}[thm]{Question}
\newtheorem*{thm*}{Main Theorem}
\theoremstyle{definition}
\newtheorem{defn}[thm]{Definition}
\theoremstyle{remark}
\newtheorem{rem}[thm]{Remark}
\newtheorem{exmp}[thm]{Example}
\renewcommand{\hom}{\operatorname{Hom}}
\newcommand{\Z}{\mathbb{Z}}
\newcommand{\R}{\mathbb{R}}
\newcommand{\C}{\mathbb{C}}
\DeclareMathOperator{\tr}{tr}
\begin{document}

\title[Character varieties and splittings of $3$-manifolds]
{Character varieties of higher dimensional representations and splittings of $3$-manifolds}
\author[T.~Hara]{Takashi Hara}
\author[T.~Kitayama]{Takahiro Kitayama}
\address{Department of Mathematics, 
College of Liberal Arts, Tsuda University, 2-1-1 Tsuda-machi, Kodaira-shi, Tokyo 157-8577, Japan}
\email{t-hara@tsuda.ac.jp}
\address{Graduate School of Mathematical Sciences, the University of Tokyo,
3-8-1 Komaba, Meguro-ku, Tokyo 153-8914, Japan}
\email{kitayama@ms.u-tokyo.ac.jp}
\subjclass[2010]{Primary~57M27, Secondary~57Q10, 20E42}
\keywords{$3$-manifold, character variety,
complex of groups, Bruhat--Tits building}

\begin{abstract}
In 1983 Culler and Shalen established a way to construct essential surfaces 
in a $3$\nobreakdash-manifold from ideal points of the $\mathrm{SL}_2$-character variety
 associated to the $3$\nobreakdash-manifold group.
We present in this article an analogous construction
of certain kinds of branched surfaces
(which we call {\em essential tribranched surfaces})
from ideal points of the $\mathrm{SL}_n$-character variety 
for a natural number~$n$ greater than or equal to $3$.
Further we verify that
such a branched surface induces a nontrivial presentation
of the $3$\nobreakdash-manifold group
in terms of the fundamental group of 
a certain $2$-dimensional complex of groups.
\end{abstract}

\maketitle

\setcounter{section}{-1}
\section{Introduction} \label{sc:introduction}

In their notable work~\cite{CS} Culler and Shalen established a method
to construct {\em essential surfaces} in a $3$-manifold from information of the $\mathrm{SL}_2(\C)$-character variety of its fundamental group.
The method is based upon the interplay among hyperbolic geometry, the
theory of incompressible surfaces and the theory on the structure 
of subgroups of the special linear group $\mathrm{SL}(2)$ of degree~$2$.
Culler--Shalen theory provides a basic and  powerful tool in
low-dimensional topology, and it has given fundamentals 
for many significant breakthroughs; for example, 
Culler and Shalen themselves proved 
the generalised Smith conjecture as a special case of their main 
results in \cite{CS}. Meanwhile, Morgan and Shalen~\cite{MS1, MS2, MS3} proposed 
new understandings of Thurston's results: the characterisation of
$3$-manifolds with the compact space of hyperbolic structures
\cite{Thurston1} and a compactification of the Teichm\"{u}ller space 
of a surface \cite{Thurston2}. Further Culler, Gordon, Luecke and Shalen~\cite{CGLS} proved the cyclic surgery theorem on Dehn fillings of knots.
We refer the reader to the exposition \cite{Shalen} for more literature and related topics on Culler--Shalen theory.

The aim of this article is to present a theory analogous to Culler and
Shalen's for {\em higher dimensional representations} of the
$3$-manifold group.
We first introduce a special kind of branched surfaces embedded in a
$3$-manifold, which we call an {\em essential tribranched surface} (see
Definition~\ref{def:ETBS} for details), 
and observe that it induces 
a nontrivial presentation of the
$3$\nobreakdash-manifold group in terms of the fundamental group 
of a certain $2$-dimensional {\em complex of groups} (see Section~\ref{sc:CG}
for the definition of complexes of groups).
Then we show that an essential tribranched surface is constructed from
an {\em ideal point} of an affine curve 
in the $\mathrm{SL}_n(\C)$-character variety of the $3$-manifold group.
Note that in our terminology an essential surface (in the usual sense) 
can be regarded as an essential tribranched surface without any branched points.

We here explain our strategy to construct an essential tribranched
surface in more detail.
Let $M$ be a compact, connected, irreducible and  orientable $3$-manifold.
We suppose that the $\mathrm{SL}_n(\C)$-character variety $X_n(M)$ of $\pi_1 (M)$
is of positive dimension, and let $\tilde{x}$ be an ideal point of an
affine algebraic curve $C$ in $X_n(M)$. By construction 
$X_n(M)$ is obtained as the (geometric invariant theoretical) quotient 
of the affine algebraic set $\hom(\pi_1(M),\mathrm{SL}_n(\C))$ by the conjugate action of $\mathrm{SL}_n(\C)$, 
and we may take a lift $D$ of $C$ in $\hom(\pi_1(M),\mathrm{SL}_n(\C))$. 
Let $\tilde{y}$ be a ``lift'' of $\tilde{x}$, which is an ideal point of the affine curve $D$.
We denote by $\C(D)$ the field of rational functions on $D$. 
The construction of an essential tribranched surface from $\tilde{x}$ is
divided into the following three steps.
Firstly, on the basis of the theory of {\em Bruhat--Tits buildings}
elaborated by Iwahori and Matsumoto \cite{IM}, and Bruhat and Tits
\cite{BT1,BT2}, we may associate to the ideal point $\tilde{y}$ 
a canonical action of $\mathrm{SL}_n(\C(D))$ on
an $(n-1)$-dimensional Euclidean building
$\mathcal{B}_{n,\widetilde{D}, \tilde{y}}$  
(see Section~\ref{ssc:NAB} for details).
Pulling back this canonical action by the tautological representation
$\pi_1(M) \to \mathrm{SL}_n(\C(D))$, we obtain an action of $\pi_1(M)$ on
$\mathcal{B}_{n, \widetilde{D}, \tilde{y}}$.
Secondly, we prove that this action is {\em nontrivial}, that is, the
isotropic subgroup at each vertex of $\mathcal{B}_{n,\widetilde{D}, \tilde{y}}$ with
respect to this action is a proper subgroup of $\pi_1(M)$.
The important point to note here is that in the case of $n = 2$ this
step is an algebraic heart of Culler and Shalen's original work \cite[Theorem~2.2.1]{CS}.
Thirdly, we show that one can construct an essential tribranched surface in general from a nontrivial action of $\pi_1(M)$ on a Euclidean building.
In this step we consider certain modifications of classical techniques due to
Stallings and Waldhausen for constructing an essential surface 
as a {\em dual} of a nontrivial action of $\pi_1(M)$ on a tree.

Now let $\mathcal{B}_{n, \widetilde{D}, \tilde{y}}^{(2)}$ denote the $2$-skeleton of
the Bruhat--Tits building $\mathcal{B}_{n,\widetilde{D},\tilde{y}}$ and let  
$Y(\mathcal{B}_{n, \widetilde{D}, \tilde{y}}^{(2)} / \pi_1(M))$ denote 
the $1$-dimensional subcomplex of the first barycentric subdivision 
of the quotient complex $\mathcal{B}_{n, \widetilde{D}, \tilde{y}}^{(2)} / \pi_1(M)$
consisting of all the barycentres of $1$- and $2$-simplices
and all the edges connecting them.
We say that an ideal point $\tilde{x}$ of an affine curve in $X_n(M)$
{\em gives a tribranched surface} $\Sigma$ if there exists 
a map $f\colon M \to \mathcal{B}_{n, \widetilde{D}, \tilde{y}} / \pi_1(M)$ such that the
tribranched surface $\Sigma$ coincides with the inverse
image of $Y(\mathcal{B}_{n, \widetilde{D}, \tilde{y}}^{(2)} / \pi_1(M))$ under $f$.
The main theorem of this article is as follows:

\begin{thm*}[Theorem \ref{thm:main}]
Let $n$ be a natural number greater than or equal to $3$, 
and assume that the boundary $\partial M$ of $M$ is non-empty when 
$n$ is strictly greater than $3$.
Then an ideal point of an affine algebraic curve in $X_n(M)$ 
gives an essential tribranched surface in $M$. 
\end{thm*}

The assumption on the boundary of $M$ comes 
from a certain technical reason required in the proof of the main result.
See the proof of Theorem~\ref{thm:BT} for details.

This article is organised as follows.
In Section~\ref{sc:CG} we give a brief exposition on complexes of groups.
Section~\ref{sc:TBS} is devoted to introduce the notion of essential tribranched
surfaces and to describe splittings of the $3$\nobreakdash-manifold
groups induced by an essential tribranched surface.
In Section~\ref{sc:BT} we review fundamentals on Bruhat--Tits buildings, in
particular, for the special linear groups.
In Section~\ref{sc:construction} the main theorem stated above is proved.
We first review several standard facts on $\mathrm{SL}_n(\C)$-character varieties in
Section~\ref{ssc:charvar}.
We then show in Section~\ref{ssc:NAB} that the action of the $3$\nobreakdash-manifold group on the Bruhat--Tits
building associated to an ideal point is nontrivial, and construct
an essential tribranched surface from such a nontrivial action in Section~\ref{ssc:construction}.   
Section~\ref{sc:A} provides an application of the theory of this article to small Seifert manifolds.
In Section~\ref{sc:Q} we raise several questions to be further studied. 

The contents of Sections~\ref{sc:CG}, \ref{ssc:cgtbs} and
\ref{ssc:SETBS} (concerning complexes of groups associated to
essential tribranched surfaces) are rather independent 
of other parts of this article,
and hence readers who are only interested in the construction of nontrivial essential tribranched surfaces may skip these sections and proceed to Section~\ref{sc:construction}.

\bigskip
\paragraph{\bfseries Note}
After the first version of this article appeared on the arXiv, Question~\ref{q:detection} in Section~\ref{sc:Q} was solved affirmatively in a much stronger form by Friedl, Nagel and the second-named author~\cite{FKN2}.
In fact, based on the construction of essential tribranched surfaces developed in this article, the breakthroughs of Agol~\cite{A} and Wise~\cite{W} on the separability of subgroups in a $3$-manifold group and the subsequent works of Przytycki and Wise~\cite{PW1, PW2}, they proved that {\em every} connected essential surface (without any branched points) in $M$ is given by an ideal point of a rational curve in $X_n(M)$ for some natural number $n$.

\bigskip
\paragraph{\bfseries Acknowledgments}
The authors would like to express their sincere gratitude to Masanori Morishita
for his valuable comments suggesting
that their results might be extended in the direction of arithmetic topology.
They would also like to thank Steven Boyer for several helpful comments
drawing their attention to the contents of Section \ref{sc:A},
and Stefan Friedl, Matthias Nagel, Tomotada Ohtsuki, Makoto Sakuma and Yuji Terashima for helpful suggestions.
This research was supported by JSPS Research Fellowships (Grant-in-Aids for Early-Carrer Scientists: Grant Numbers 18K13395 and 18K13404, and Fund for the Promotion of Joint International Research (Fostering Joint International Research (A)): Grant Number 18KK0380).

\tableofcontents

\section{Preliminaries on complexes of groups} \label{sc:CG}

The theory of graphs of groups due to Hyman Bass and Jean-Pierre Serre \cite{Serre}
has been naturally generalised to the {\em theory of complexes of groups}
introduced independently by Jon Michael Corson \cite{Corson} (mainly for 
$2$\nobreakdash-complexes of groups) 
and Andr\'e Haefliger \cite{Haefliger} (in general).
We shall briefly recall the definitions of complexes of groups and 
their fundamental groups. Here we adopt a combinatorial 
approach proposed in \cite[Chapter~III.$\mathcal{C}$]{BH} 
rather than a topological approach based upon the concept of 
{\em complexes of spaces} especially when we define the
fundamental groups of complexes of groups (see \cite{Corson, Haefliger} 
for details of the latter approach). One of the great virtues of 
the combinatorial approach is that one may explicitly describe
generators and relations of the fundamental group of a complex of
groups, as we shall see later in Section~\ref{ssc:FG}. 
Furthermore we shall consider 
complexes of groups over {\em scwols} rather than complexes of groups 
over combinatorial CW-complexes (the latter notion is 
introduced in \cite[Section~2, Definition]{Corson}). 
One may readily observe that these two concepts of 
complexes of groups essentially coincide by considering scwols 
associated to combinatorial CW-complexes (essentially it is equivalent
to consider the ``first barycentric subdivision'' 
of a combinatorial CW-complex). 
We shall briefly explain how to associate 
a scwol to a combinatorial CW-complex (of dimension~$2$) via the first barycentric subdivision later in Section~\ref{ssc:cgtbs}.

\subsection{Scwols and their fundamental groups} 

Recall that a {\em scwol} $\mathcal{Y}$ (an abbreviation of a \underline{s}mall
\underline{c}ategory \underline{w}ith\underline{o}ut \underline{l}oops)
consists of two sets $V(\mathcal{Y})$ and $E(\mathcal{Y})$ equipped with 
set-theoretical maps $i \colon E(\mathcal{Y}) \rightarrow
V(\mathcal{Y})$ and $t \colon E(\mathcal{Y}) \rightarrow V(\mathcal{Y})$ 
satisfying the following properties:
\begin{enumerate}[labelindent=.5em, labelwidth=3.5em, labelsep*=1em,
 leftmargin=!, label=(Scw\arabic*)]
\setcounter{enumi}{0}
\item an element of $E(\mathcal{Y})$ denoted by $ab$ is associated to 
      each pair $(a,b)$ of elements of $E(\mathcal{Y})$
      satisfying $i(a)=t(b)$
      (the element $ab$ is called
      the {\em composition} of the {\em composable} pair $(a,b)$
      in $E^{(2)}(\mathcal{Y})$);
\item we have $i(ab)=i(b)$ and $t(ab)=t(a)$ for a composable pair $(a,b)$ in $E^{(2)}(\mathcal{Y})$;
\item the composition law is {\em associative}, that is, the composition
      $a(bc)$ coincides with $(ab)c$ for composable pairs $(a,b)$ and $(b,c)$ in $E^{(2)}(\mathcal{Y})$;
\item the elements $i(a)$ and $t(a)$ are distinct for each $a$ in $E(\mathcal{Y})$.
\end{enumerate}
Here $E^{(k)}(\mathcal{Y})$ denotes the set of
$k$\nobreakdash-sequences $(a_1, \dotsc, a_k)$ of elements of
$E(\mathcal{Y})$ satisfying 
the equality $i(a_j)=t(a_{j+1})$ for each $1\leq j\leq k-1$. 
Elements of $V(\mathcal{Y})$ are called {\em vertices} of
$\mathcal{Y}$, and those of $E(\mathcal{Y})$ are called (oriented) {\em
edges} of $\mathcal{Y}$. For an edge $a$ of $\mathcal{Y}$, the vertices 
$i(a)$ and $t(a)$ are respectively called the {\em initial} and {\em terminal}
vertices of $a$. A scwol $\mathcal{Y}$ has its {\em geometric realisation}
$|\mathcal{Y}|$ defined in an appropriate way, which is a (polyhedral)
complex all of whose cells are simplices; see
\cite[Chapter~III.$\mathcal{C}$ Section~1.3]{BH} for details.
A {\em morphism} $f \colon \mathcal{Y} \rightarrow \mathcal{Y}'$ of scwols 
consists of 
(set-theoretical) maps $V(\mathcal{Y})\rightarrow V(\mathcal{Y}')$ and 
$E(\mathcal{Y})\rightarrow E(\mathcal{Y}')$ which are compatible with the
scwol structures of $\mathcal{Y}$ and $\mathcal{Y}'$ (refer to
\cite[Chapter~III.$\mathcal{C}$ Section~1.5]{BH} for the precise definition).

In order to introduce the fundamental group of a scwol, we here 
summarise basic notion on edge paths.
Let $E^+(\mathcal{Y})$ (resp.\ $E^-(\mathcal{Y})$) denote the set 
consisting of an element denoted by $a^+$ (resp.\ $a^-$) for each edge
$a$ of $\mathcal{Y}$, whose initial and terminal vertices are determined
by $i(a^+)=t(a)$ and $t(a^+)=i(a)$ (resp.\ $i(a^-)=i(a)$ and
$t(a^-)=t(a)$). We denote by $E^\pm(\mathcal{Y})$ the disjoint union 
of $E^+(\mathcal{Y})$ and $E^-(\mathcal{Y})$. We also set
$(a^\pm)^{-1}=a^\mp$ (double sign in the same order). An {\em edge path} in
$\mathcal{Y}$ is a finite sequence $l=(e_1, \dotsc, e_n)$ of 
elements of $E^\pm(\mathcal{Y})$ satisfying $t(e_j)=i(e_{j+1})$ for 
each $1\leq j\leq n-1$. The vertices $i(e_1)$ and $t(e_n)$ are called 
the {\em initial} and {\em terminal} vertices of the edge path $l$, and 
denoted by $i(l)$ and $t(l)$ respectively. We define the {\em
concatenation} $l*l'$ of edge paths $l=(e_1, \dotsc, e_m)$ and $l'=(e_1',
\dotsc, e_n')$ by $l*l'=(e_1, \dotsc, e_m, e_1', \dotsc, e_n')$ when 
the pair $(l,l')$ satisfies $t(l)=i(l')$, and we also define the {\em
inverse edge path} $l^{-1}$ of an edge path $l=(e_1, \dotsc, e_n)$ by 
$l^{-1}=(e_n^{-1}, e_{n-1}^{-1}, \dotsc , e_1^{-1})$. 
An edge path is called an {\em
edge loop} when its initial vertex coincides with its terminal vertex; 
in the case its initial (and hence also terminal) vertex is called its
{\em base vertex}. 

Now let $\mathcal{Y}$ be a {\em connected} scwol (in the sense that arbitrary 
two vertices of $\mathcal{Y}$ are connected by an edge path in
$\mathcal{Y}$) 
and let us consider the set of all edge loops with base vertex $\sigma_0$. 
We endow this set with an equivalence relation $\sim$ (called {\em homotopy
equivalence}) generated by the following two elementary relations:
\begin{enumerate}[label=\roman*), widest=ii]
\item $(e_1, \dotsc, e_{j-1}, e_j, e_{j+1}, e_{j+2}, \dotsc , e_n) \sim
      (e_1, \dotsc, e_{j-1}, e_{j+2}, \dotsc, e_n)$ if $e_{j+1}$
      coincides with $e_j^{-1}$;
\item if $(a,b)$ is a composable pair in $E^{(2)}(\mathcal{Y})$, we impose
\begin{align*}
(e_1, \dotsc, e_{i-1}, e_i=a^+, e_{i+1}=b^+, e_{i+2}, \dotsc, e_m) \sim 
(e_1, \dotsc, e_{i-1}, (ab)^+, e_{i+2}, \dotsc, e_m)
\end{align*}
and
\begin{align*}
(e_1, \dotsc, e_{j-1}, e_j=b^-, e_{j+1}=a^-, e_{j+2}, \dotsc, e_n) \sim 
(e_1, \dotsc, e_{j-1}, (ab)^-, e_{j+2}, \dotsc, e_n).
\end{align*}
\end{enumerate}
The set $\pi_1(\mathcal{Y}, \sigma_0)$ of all (homotopy) equivalence classes 
of edge loops with base vertex $\sigma_0$  
is indeed equipped with a group structure whose group law is induced 
by the concatenation of edge loops: $[c]*[c']=[c*c']$. For an edge loop $c$, 
the inverse of $[c]$ is given 
by the homotopy class $[c^{-1}]$ of the inverse loop $c^{-1}$ of $c$, 
and the unit element is given by the homotopy class of the 
{\em constant loop $[c_{\sigma_0}]$ at $\sigma_0$}  (by definition 
the constant loop $c_{\sigma_0}$ corresponds to 
the ``empty word'' $c_{\sigma_0}=(\,)$ of $E^\pm (\mathcal{Y})$, both of 
whose initial and terminal vertices are defined as $\sigma_0$). 
We call $\pi_1(\mathcal{Y}, \sigma_0)$ the {\em fundamental group} of
the scwol $\mathcal{Y}$ at $\sigma_0$. 
We can construct an isomorphism $\pi_1(\mathcal{Y},
\sigma_0)\xrightarrow{\sim} \pi_1(\mathcal{Y}, \sigma_0'); [c]\mapsto
[l_{\sigma_0, \sigma_0'}^{-1}*c*l_{\sigma_0, \sigma_0'}]$ as in the case
of usual fundamental groups, where 
$l_{\sigma_0,\sigma_0'}$ is an edge path with initial vertex $\sigma_0$
and terminal vertex $\sigma_0'$ (since we assume that $\mathcal{Y}$ is connected, such $l_{\sigma_0,\sigma_0'}$ always exists; obviously this isomorphism is not a
canonical one). 

We end this subsection by quoting the following classical fact. 

\begin{prop} \label{prop:fundamental} 
Let $\mathcal{Y}$ be a connected scwol and $\sigma_0$ a vertex of $\mathcal{Y}$.
Then the fundamental group $\pi_1(\mathcal{Y}, \sigma_0)$ of the scwol
 $\mathcal{Y}$ is canonically isomorphic to the fundamental group 
$($in the usual sense$)$ 
$\pi_1(|\mathcal{Y}|, \sigma_0)$ of its geometric realisation. 
In particular, a connected scwol $\mathcal{Y}$ is 
{\em simply connected} $($in the sense that its
fundamental group is trivial$)$ if and only if its geometric realisation
$|\mathcal{Y}|$ is simply connected in the usual sense. 
\end{prop}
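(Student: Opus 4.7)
The plan is to construct an explicit homomorphism between the two groups and verify bijectivity, following the standard template by which the fundamental group of a polyhedral complex is computed in terms of its $2$\nobreakdash-skeleton.

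First, I would define a map $\Phi \colon \pi_1(\mathcal{Y}, \sigma_0) \to \pi_1(|\mathcal{Y}|, \sigma_0)$ on the level of edge loops. To each element $a^\pm \in E^\pm(\mathcal{Y})$ one associates the corresponding closed $1$\nobreakdash-simplex of $|\mathcal{Y}|$ parametrised so that it starts at $i(a^\pm)$ and ends at $t(a^\pm)$; concatenating these paths in order turns an edge loop $l = (e_1, \dotsc, e_n)$ at $\sigma_0$ into a piecewise-linear loop in $|\mathcal{Y}|$ based at $\sigma_0$. To see that $\Phi$ descends to the quotient by the elementary equivalence relations, I would observe that relation (i) realises literal backtracking along a $1$\nobreakdash-simplex, hence is null-homotopic, while relation (ii) is accommodated by the $2$\nobreakdash-cells of $|\mathcal{Y}|$: by construction, each composable pair $(a,b) \in E^{(2)}(\mathcal{Y})$ gives rise to a $2$\nobreakdash-simplex in $|\mathcal{Y}|$ with vertices $i(b)$, $t(b)=i(a)$, $t(a)$, whose three edges are $a^+$, $b^+$ and $(ab)^+$, so that traversing $b$ then $a$ is homotopic rel endpoints to traversing $ab$ through the interior of this $2$\nobreakdash-simplex. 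One then checks directly that $\Phi$ is a group homomorphism.

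Second, surjectivity would follow from simplicial approximation applied to the (polyhedral) complex $|\mathcal{Y}|$: any loop based at $\sigma_0$ is homotopic to a loop in the $1$\nobreakdash-skeleton, and such a loop obviously lies in the image of $\Phi$. For injectivity, I would put a null-homotopy of $\Phi([l])$ into cellular form and read off, cell by cell, a sequence of elementary moves on the scwol side: each crossing of a $2$\nobreakdash-cell instantiates relation (ii), each back-and-forth along a $1$\nobreakdash-cell instantiates relation (i), and higher-dimensional cells of $|\mathcal{Y}|$ (which correspond to longer composable tuples in $E^{(k)}(\mathcal{Y})$) do not introduce new relations because their attaching maps factor through the $2$\nobreakdash-skeleton in a way compatible with associativity (Scw3). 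This reduction produces the required chain of elementary equivalences showing $[l]$ is trivial in $\pi_1(\mathcal{Y}, \sigma_0)$.

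The main obstacle is the precise combinatorial dictionary between cells of $|\mathcal{Y}|$ and composable tuples, together with the verification that the $2$\nobreakdash-cell boundaries match relation (ii) on the nose with the correct signs; all of this is a bookkeeping exercise once the definition of the geometric realisation from \cite[Chapter~III.$\mathcal{C}$ Section~1.3]{BH} is unpacked. Since the statement is classical, in the final write-up I would give the construction of $\Phi$ explicitly and then refer the reader to \cite[Chapter~III.$\mathcal{C}$]{BH} for the verification of bijectivity.
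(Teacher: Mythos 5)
Your proposal is correct and follows essentially the same route as the paper, which offers no independent argument for this classical statement but simply refers to Bridson--Haefliger (Chapter~III.$\mathcal{C}$, Section~1.8) and Massey, where exactly this comparison of the edge-path group with $\pi_1$ of the geometric realisation (relations (i) and (ii) matched to backtracking and to the $2$-simplices indexed by composable pairs, surjectivity by simplicial approximation, injectivity by putting a null-homotopy into cellular form, higher cells irrelevant for $\pi_1$) is carried out. The only cosmetic caveat is that the justification for discarding cells of dimension at least $3$ is simply that attaching such cells never changes the fundamental group, not any compatibility with (Scw3).
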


For details, see \cite[Chapter~III.$\mathcal{C}$ Section~1.8]{BH} and
\cite{Massey}.

\subsection{Complexes of groups and their fundamental groups} \label{ssc:FG}

A {\em complex of groups} $G(\mathcal{Y})$ over a scwol $\mathcal{Y}$ 
consists of three types of data: a group $G_\sigma$ for each vertex
$\sigma$ of $\mathcal{Y}$ called the {\em local group} at $\sigma$, 
an injective group homomorphism $\psi_a \colon G_{i(a)} \rightarrow
G_{t(a)}$ for each edge $a$ of $\mathcal{Y}$, and a specific element $g_{a,b}$ 
of $G_{t(a)}$, called a {\em twisting element}, for each composable 
pair $(a,b)$ in $E^{(2)}(\mathcal{Y})$. We impose the following two
constraints on these data:
\begin{itemize}
\item[-] (twisted commutativity)
	 the equality $g_{a,b}\psi_{ab}(x)g_{a,b}^{-1}=\psi_a\circ
	 \psi_b(x)$ holds for
	 each composable pair $(a,b)$ in $E^{(2)}(\mathcal{Y})$ and 
	 every element $x$ of $G_{i(b)}$;
\item[-] (cocycle condition) the equality 
	 $\psi_a(g_{b,c})g_{a,bc}=g_{a,b}g_{ab,c}$ holds
	 for each pairwisely composable triple $(a,b,c)$ in
	 $E^{(3)}(\mathcal{Y})$.
\end{itemize}
A complex of groups $G(\mathcal{Y})$ over $\mathcal{Y}$ is called {\em
simple} if all the twisting elements are trivial, that is, the element 
$g_{a,b}$ equals
the unit of $G_{t(a)}$ for
each composable pair $(a,b)$ in $E^{(2)}(\mathcal{Y})$. 

\begin{rem} \label{rem:cocycle}
We here remark that for a complex of groups {\em of dimension
at most $2$}, that is, for a complex of groups 
whose geometric realisation is of dimension at most $2$, 
the cocycle condition among twisting elements introduced above is 
just the empty condition (simply because $E^{(3)}(\mathcal{Y})$ is empty). 
Later we shall mainly study complexes of groups associated 
to essential tribranched surfaces in a $3$\nobreakdash-manifold, 
which we shall define in Section~\ref{ssc:cgtbs}. 
Obviously by construction they are of dimension at most $2$, 
and hence we do not have to consider the cocycle conditions 
whenever we are concerned with complexes of groups 
associated to essential tribranched surfaces.
\end{rem}

Let $f\colon \mathcal{Y}\rightarrow \mathcal{Y}'$ be a morphism of
scwols, and let $G(\mathcal{Y})$ and $G(\mathcal{Y}')$ be complexes 
of groups over $\mathcal{Y}$ and $\mathcal{Y'}$ respectively. 
A {\em morphism of complexes of groups} (over $f$) $\phi \colon
G(\mathcal{Y})\rightarrow G(\mathcal{Y}')$ consists of two types of
data: a group homomorphism between local groups 
$\phi_\sigma \colon
G_\sigma \rightarrow G_{f(\sigma)}$ for each vertex $\sigma$ of
$\mathcal{Y}$ and a specific element $\phi(a)$ of $G_{t(f(a))}$, 
called a {\em twisting element}, for each edge $a$ of $\mathcal{Y}$. 
We impose the following two constraints on these data:
\begin{itemize}
\item[--] (twisted commutativity)
	 the element $\phi(a)\bigl(\psi_{f(a)}\circ
	 \phi_{i(a)}(x)\bigr)\phi(a)^{-1}$ of $G_{t(f(a))}$ 
	 coincides with $\phi_{t(a)}\circ \psi_a(x)$
	 for each edge $a$ of $\mathcal{Y}$ and each element $x$ of
	 $G_{i(a)}$; 
\item[--] (compatibility among twisting elements) 
	 the element $\phi_{t(a)}(g_{a,b})\phi(ab)$ of $G_{t(f(a))}$ 
	 coincides with 
	 $\phi(a)\psi_{f(a)}(\phi(b))g_{f(a),f(b)}$
	 for each composable pair $(a,b)$ in $E^{(2)}(\mathcal{Y})$.
\end{itemize}
A morphism $\phi$ of complexes of groups is said to be an {\em 
isomorphism} if its local group homomorphism $\phi_\sigma$ is 
an isomorphism for each vertex $\sigma$ of $\mathcal{Y}$.
By regarding an abstract group $G$ as a complex of groups over the trivial
scwol (that is, the scwol consisting of a single vertex)
whose local group at its unique vertex is $G$, we may also consider 
the notion of morphisms $G(\mathcal{Y})\rightarrow G$ from a complex
of groups $G(\mathcal{Y})$ to an abstract group $G$. 

Next we introduce the notion of the fundamental group of a complex of
groups. Let $G(\mathcal{Y})$ be a complex of groups over a scwol $\mathcal{Y}$. 
A {\em $G(\mathcal{Y})$-path} in $\mathcal{Y}$ is a finite sequence 
$l=(g_0, e_1, g_1, \dotsc, e_n, g_n)$ where $(e_1, \dotsc, e_n)$ is an
edge path in $\mathcal{Y}$, $g_0$ is an element of the local group
$G_{i(e_1)}$ at $i(e_1)$ and $g_j$ is an element of the local group
$G_{t(e_j)}$ at $t(e_j)$ for each $1\leq j\leq n$. For
$G(\mathcal{Y})$\nobreakdash-paths we define their initial and terminal 
vertices, concatenations and inverse paths similarly to those of edge paths as follows.
\begin{description}
\item[Initial and terminal vertices] for a
	   $G(\mathcal{Y})$\nobreakdash-path $l=(g_0, e_1, g_1, \dotsc, e_n,
	   g_n)$, set $i(l)=i(e_1)$ and $t(l)=t(e_n)$.
\item[Concatenation] for $G(\mathcal{Y})$\nobreakdash-paths $l=(g_0,
	   e_1, g_1, \dotsc, e_m, g_m)$ and $l'=(g'_0, e'_1, g'_1,
	   \dotsc, e'_n, g'_n)$ satisfying $t(l)=i(l')$, set 
	   $l*l'=(g_0, e_1, g_1, \dotsc, e_m, g_mg'_0, e'_1, g'_1,
	   \dotsc, e'_n, g'_n)$.
\item[Inverse path] for a $G(\mathcal{Y})$\nobreakdash-path
	   $l=(g_0,e_1,g_1, \dotsc, e_n, g_n)$, define its inverse
	   $G(\mathcal{Y})$\nobreakdash-path $l^{-1}$ as  $l^{-1}=(g_n^{-1},
	   e_n^{-1},g_{n-1}^{-1}, \dotsc, e_1^{-1}, g_0^{-1})$.
\end{description}
Now let $FG(\mathcal{Y})$ be the {\em universal group associated to
$G(\mathcal{Y})$} which is defined by the following generators and
relations.
\begin{description}
\item[Generators] elements of all local groups $G_\sigma$ and elements
	   of $E^\pm (\mathcal{Y})$. 
\item[Relations] we impose on the generators the following four types of
	   relations: 
\begin{itemize}
\item[--] the group relations for each $G_\sigma$; 
\item[--] $(a^\pm)^{-1}=a^\mp$ for each edge $a$ in $\mathcal{Y}$ (double
	 sign in the same order);
\item[--] $a^+b^+=g_{a,b}(ab)^+$ for each composable pair $(a,b)$ in
	 $E^{(2)}(\mathcal{Y})$; 
\item[--] $\psi_a(x)=a^+xa^-$ for each edge $a$ of $\mathcal{Y}$ and each
	 element $x$ of $G_{i(a)}$.
\end{itemize}
\end{description}
Then it is easy to check that the morphism 
$\iota \colon G(\mathcal{Y})\rightarrow FG(\mathcal{Y})$, which consists of 
a group homomorphism $\iota_\sigma \colon G_\sigma \rightarrow
FG(\mathcal{Y}); g\mapsto g$ for each vertex $\sigma$ of $\mathcal{Y}$ 
and a twisting element $\iota(a)=a^+$ for each edge $a$ of $\mathcal{Y}$,
has a universal property among morphisms from $G(\mathcal{Y})$ to 
abstract groups. More specifically, for every 
morphism $\phi \colon G(\mathcal{Y})\rightarrow G$ from $G(\mathcal{Y})$
to an abstract group
$G$ we obtain a unique group homomorphism $F\phi \colon FG(\mathcal{Y})\rightarrow
G$ which satisfies $\phi=F\phi
\circ \iota$ (see \cite[Chapter~III.$\mathcal{C}$ Section~3.2]{BH} for details).

We associate to each $G(\mathcal{Y})$-loop $c=(g_0, e_1, g_1, \dotsc ,
e_n, g_n)$ with base vertex $\sigma_0$ 
an element $[c]$ of $FG(\mathcal{Y})$ which is 
by definition the image of the word $g_0e_1g_1\cdots e_ng_n$ in
$FG(\mathcal{Y})$. The image of $[~\cdot~]$ (as a map from the set of 
$G(\mathcal{Y})$\nobreakdash-loops with base vertex $\sigma_0$) 
is equipped with a group structure 
induced by concatenations, which we denote by $\pi_1(G(\mathcal{Y}),
\sigma_0)$ and call the {\em fundamental group} of $G(\mathcal{Y})$.
We remark that the definition of the fundamental group
$\pi_1(G(\mathcal{Y}), \sigma_0)$ of a complex of groups
$G(\mathcal{Y})$ (of higher dimension) introduced here 
is a direct generalisation of 
the definition of the fundamental group of a graph of groups 
due to Bass and Serre \cite[Section~5.1]{Serre}.

\subsection{Group actions on scwols and developability}

Let $\mathcal{X}$ be a scwol and $G$ an abstract group.
An {\em action of $G$ on $\mathcal{X}$} is a group homomorphism 
$G \rightarrow \mathrm{Aut}(\mathcal{X})$ satisfying the following two
conditions: 
\begin{enumerate}[label=\roman*), widest=ii]
\item the vertex $g.i(a)$ does not equal $t(a)$ for each edge $a$ of
      $\mathcal{X}$ and each element $g$ of $G$;
\item if $g.i(a)=i(a)$ holds for an edge $a$ of $\mathcal{X}$ and an
      element $g$ of $G$, we have $g.a=a$.
\end{enumerate}
Here $g.\sigma$ (resp.\ $g.a$) denotes the image of 
a vertex $\sigma$ of $\mathcal{X}$ (resp.\ an edge $a$ of
$\mathcal{X}$) under the automorphism of $\mathcal{X}$ induced by $g$. 
For such an action of $G$ on $\mathcal{X}$, 
we may construct the {\em quotient scwol} $\mathcal{Y}=G \backslash \mathcal{X}$ 
by setting $V(\mathcal{Y})=G \backslash V(\mathcal{X})$ and 
$E(\mathcal{Y})=G \backslash E(\mathcal{X})$ (initial and terminal
vertices and compositions in $\mathcal{Y}$ are determined 
in obvious manners; that is, 
$i(G.a)$ and $t(G.a)$ are defined as $G.i(a)$ and $G.t(a)$ respectively 
for an edge $a$ of $\mathcal{X}$,
and the composition $(G.a)(G.b)$ is defined as $G.ab$ for each composable pair $(a,b)$ in
$E^{(2)}(\mathcal{X})$).   

We may endow the quotient scwol $\mathcal{Y}=G \backslash \mathcal{X}$
with the structure of a complex of groups in the following way. For each 
vertex $\sigma$ of $\mathcal{Y}$, we choose a lift $\tilde{\sigma}$ of
$\sigma$ to $\mathcal{X}$ (that is, $\tilde{\sigma}$ is a vertex of
$\mathcal{X}$ whose $G$-orbit coincides with $\sigma$). The condition
ii) of the group action on a scwol implies that for each edge $a$ of 
$\mathcal{Y}$ with initial vertex $\sigma$, there exists a unique lift 
$\tilde{a}$ of $a$ to $\mathcal{X}$ with initial vertex
$\tilde{\sigma}$. Let us choose an element $h_a$ of $G$ satisfying 
$h_a.t(\tilde{a})=\widetilde{t(a)}$. We define the local group
$G_\sigma$ at a vertex $\sigma$ of $\mathcal{Y}$ as 
the isotropy subgroup $G_{\tilde{\sigma}}$ of $G$ at $\tilde{\sigma}$ with
respect to the group action of $G$ on $\mathcal{X}$. For each edge $a$
of $\mathcal{Y}$, we define a group homomorphism $\psi_a \colon
G_{i(a)}\rightarrow G_{t(a)}$ by $\psi_a(g)=h_agh_a^{-1}$.
Finally for each composable pair $(a,b)$ in $E^{(2)}(\mathcal{Y})$, 
we define a twisting element $g_{a,b}$ 
as $h_ah_bh_{ab}^{-1}$.
It is easy to verify that these data determine the structure of 
a complex of groups $G(\mathcal{Y})$ over the quotient scwol
$\mathcal{Y}$, which we call the {\em complex of groups associated to 
the group action of $G$ on $\mathcal{X}$}. Note that 
if we choose a different lift $\tilde{\sigma}'$ for each vertex $\sigma$
of $\mathcal{Y}$ and a different element $h_a'$ for each edge $a$ of
$\mathcal{Y}$, the resultant complex of groups $G'(\mathcal{Y})$ is 
still isomorphic to $G(\mathcal{Y})$; see
\cite[Chapter~III.$\mathcal{C}$ Section~2.9 (2)]{BH} for details.
When a complex of groups $G(\mathcal{Y})$ is associated to an action 
of $G$ on a scwol $\mathcal{X}$ as above, 
we may construct a morphism $\phi \colon G(\mathcal{Y})\rightarrow
G$ by setting $\phi_\sigma(g)=g$ for each vertex $\sigma$ and
$\phi(a)=h_a$ for each edge $a$.

A complex of groups $G(\mathcal{Y})$ over a scwol $\mathcal{Y}$ is
called {\em developable} if there exists a scwol $\mathcal{X}$ equipped
with an action of a group $G$ such that $G(\mathcal{Y})$ is isomorphic to 
the complex of groups associated to the group action of $G$ on
$\mathcal{X}$. Unlike graphs of groups, complexes of groups of higher
dimension are not always developable. The following proposition 
proposes a necessary and
sufficient condition for a complex of groups to be developable.

\begin{prop}[{\cite[Chapter~III.$\mathcal{C}$ Corollary~2.15]{BH}}]
 \label{prop:development} 
 A complex of groups $G(\mathcal{Y})$ is developable if and only if 
there exists a morphism from $G(\mathcal{Y})$ to a certain
 $($abstract$)$ group $G$ 
which is injective on each local group $G_\sigma$ of $G(\mathcal{Y})$.
\end{prop}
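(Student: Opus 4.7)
My plan is to prove the two implications of the biconditional separately, with the substance concentrated in the converse direction.

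The only if direction is essentially tautological. If $G(\mathcal{Y})$ is the complex of groups associated to an action of $G$ on a scwol $\mathcal{X}$, then, as noted in the paragraph preceding the statement, one obtains a canonical morphism $\phi \colon G(\mathcal{Y}) \to G$ with $\phi_\sigma(g) = g$ and $\phi(a) = h_a$. By construction each local group $G_\sigma$ is realised as the isotropy subgroup $G_{\widetilde{\sigma}} \leq G$, so $\phi_\sigma$ is tautologically injective.

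For the converse, suppose $\phi \colon G(\mathcal{Y}) \to G$ is a morphism that is injective on every local group. The strategy is to construct explicitly from $\phi$ a scwol $\mathcal{X}$ carrying a $G$-action whose associated complex of groups recovers $G(\mathcal{Y})$. Take
\[
V(\mathcal{X}) = \bigsqcup_{\sigma \in V(\mathcal{Y})} G/\phi_\sigma(G_\sigma), \qquad E(\mathcal{X}) = \bigsqcup_{a \in E(\mathcal{Y})} G/\phi_{i(a)}(G_{i(a)}),
\]
in both cases with $G$ acting by left translation on each summand. For an edge $[g, a] \in E(\mathcal{X})$, declare $i([g, a]) = [g, i(a)]$ and prescribe $t([g, a])$ via transport by $\phi(a) \in G$; twisted commutativity of $\phi$ combined with injectivity of $\phi_{t(a)}$ ensures this descends to a well-defined map on cosets. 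Define compositions of edges using the twisting elements $g_{a,b}$ pushed into $G$ via $\phi_{t(a)}$.

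The verification then proceeds in four steps: (1) $\mathcal{X}$ is a scwol, with the no-loop axiom reducing to that of $\mathcal{Y}$; (2) the prescribed $G$-action satisfies conditions (i) and (ii) of a scwol action, which follows from the product structure of $V(\mathcal{X})$ and $E(\mathcal{X})$; (3) the quotient scwol $G \backslash \mathcal{X}$ is canonically identified with $\mathcal{Y}$; and (4) for the distinguished lifts $\widetilde{\sigma} = [e, \sigma]$ and transition elements $h_a = \phi(a)$, the complex of groups associated to the $G$-action on $\mathcal{X}$ is isomorphic to $G(\mathcal{Y})$ itself. The injectivity hypothesis is indispensable in step (4): it guarantees that the isotropy subgroup at $[e, \sigma]$ is precisely $\phi_\sigma(G_\sigma)$, canonically identified with $G_\sigma$ through $\phi_\sigma$, so that the local groups are recovered on the nose rather than merely up to a quotient.

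The main obstacle I anticipate is the bookkeeping in steps (1) and (4): associativity of the composition on $\mathcal{X}$ reduces to the cocycle condition for the twisting elements $g_{a,b}$, while the identification of twisting data in (4) requires the identity $h_a h_b h_{ab}^{-1} = \phi_{t(a)}(g_{a,b})$. Both reductions follow from the two defining axioms of a morphism of complexes of groups (twisted commutativity and compatibility of twisting elements) applied to $\phi$, so no input beyond the given hypothesis is required.
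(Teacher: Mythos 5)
Your proof is correct and takes essentially the same route as the paper, which simply cites [BH, Chapter~III.$\mathcal{C}$, Corollary~2.15] and then records exactly the development $D(\mathcal{Y},\phi)$ you construct (the same coset vertex and edge sets $G/\phi_\sigma(G_\sigma)$, the incidence maps twisted by $\phi(a)$, and the left-translation $G$-action). One tiny point: well-definedness of $t$ on cosets needs only the twisted commutativity of $\phi$, not injectivity of $\phi_{t(a)}$; injectivity is what recovers the local groups as isotropy subgroups, exactly as you use it in your step (4).
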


In fact if $G(\mathcal{Y})$ admits a morphism $\phi \colon 
G(\mathcal{Y})\rightarrow G$ which is injective 
on each local group of $G(\mathcal{Y})$, we may construct in a
canonical manner
a scwol $D(\mathcal{Y}, \phi)$ equipped with a group action of $G$ 
 (which is called the {\em development of $\mathcal{Y}$ with
respect to $\phi$}) by setting
\begin{align*}
V(D(\mathcal{Y}, \phi))&=\{ \, (g\phi_\sigma(G_\sigma), \sigma) \mid \sigma
 \in V(\mathcal{Y}), g\phi_\sigma(G_\sigma) \in
 G / \phi_\sigma(G_\sigma) \, \}, \\
E(D(\mathcal{Y}, \phi))&=\{ \, (g\phi_{i(a)}(G_{i(a)}), a) \mid a
 \in E(\mathcal{Y}), g\phi_{i(a)}(G_{i(a)}) \in
 G / \phi_{i(a)}(G_{i(a)}) \, \} 
\end{align*}
and
\begin{align*}
i((g\phi_{i(a)}(G_{i(a)}), a))&=(g\phi_{i(a)}(G_{i(a)}), i(a)), \\
t((g\phi_{i(a)}(G_{i(a)}), a))&=(g\phi(a)^{-1}\phi_{t(a)}(G_{t(a)}), t(a))
\end{align*}
for each $(g\phi_{i(a)}(G_{i(a)}), a)$ in $E(D(\mathcal{Y}, \phi))$.
The group $G$ acts on $D(\mathcal{Y},\phi)$ in a natural way; namely, 
an element $x$ of $G$ acts as 
\begin{align*}
x.(g\phi_\sigma(G_\sigma), \sigma)&=(xg\phi_\sigma(G_\sigma), \sigma),
 & x.(g\phi_{i(a)}(G_{i(a)}), a)&=(xg\phi_{i(a)}(G_{i(a)}),
 a).
\end{align*}
For details, see \cite[Chapter~III.$\mathcal{C}$ Theorem~2.13]{BH}.

\section{Tribranched surfaces and complexes of groups} \label{sc:TBS}

We introduce in this section the notion of {\em tribranched
surfaces} and {\em essential tribranched surfaces} which shall 
play key roles throughout this article. 
It is a certain generalisation of the concepts of surfaces (contained in a
$3$\nobreakdash-manifold) and essential surfaces (see, for example, 
\cite[Definition~1.5.1]{Shalen} for the definition of
essential surfaces).
After proposing the definitions of 
tribranched surfaces and essential tribranched surfaces 
in Section~\ref{ssc:TBS}, 
we observe that 
essential tribranched surfaces behave compatibly with 
the theory of complexes of groups in Sections~\ref{ssc:cgtbs} and
\ref{ssc:SETBS} (as 
the notion of essential surfaces is well adapted 
to Bass and Serre's theory on graphs of groups \cite{Serre}
in the original work of Culler and Shalen \cite{CS} ). 

\subsection{Tribranched surfaces and essential tribranched surfaces} \label{ssc:TBS}

Let $M$ be a $3$\nobreakdash-manifold with possibly nonempty boundary.
Let $\Sigma$ be a compact subset of $M$ such that the pair $(M, \Sigma)$ is
locally homeomorphic to $(\overline{\mathbb{H}}, Y \times [0, \infty))$,
where $\overline{\mathbb{H}}$ and $Y$ are defined by
\begin{align*}
\overline{\mathbb{H}} = \{\,  (z, s) \in \C \times \R \mid s \geq 0 \, \}, \qquad
Y = \{ \, r e^{\sqrt{-1} \theta} \in \C \mid r\in \R_{\geq 0} \text{ and
 }\theta = 0, \pm 2 \pi/3 \, \}.
\end{align*}
We denote by $C(\Sigma)$ the set of \textit{branched points} of $\Sigma$
corresponding to $\{0\} \times [0, \infty) \subset Y\times [0,\infty)$, by $S(\Sigma)$ the complement of a sufficiently small tubular neighbourhood of $C(\Sigma)$ in $\Sigma$, and by $M(\Sigma)$ the complement of a regular neighbourhood of $\Sigma$ in $M$.
The subsets $C(\Sigma)$ and $S(\Sigma)$ are a properly embedded
$1$-submanifold and a subsurface of $M$ respectively.
See Figure \ref{fig:TBS} for a local picture of $\Sigma$.

\begin{figure}[h]
\centering
\begin{tikzpicture}
\draw (0,0) rectangle (2.5,3);
\draw [very thick] (0,0) -- (0,3);
\draw (0,0) -- (-2,-1) -- (-2, 2) -- (0, 3);
\draw[dashed] (0,0) -- (-2,1);
\draw (-2,1) -- (-2.6,1.3) -- (-2.6,4.3) -- (0,3);

\draw (-3.2,3) -- (-4.7,-1.5) -- (2.8,-1.5) -- (4.2,3);

\draw [->,>=latex] (0.5,3.5) .. controls (0.3,3.55) and (0.1,3.5) .. (0,3.05);
\draw (0.5,3.5) node [right] {$C(\Sigma)$};
\draw (1.8,1.5) node {$S(\Sigma)$};
\draw (2,-1) node {$\partial M$};
\end{tikzpicture}
\caption{A tribranched surface $\Sigma$}
\label{fig:TBS}
\end{figure}
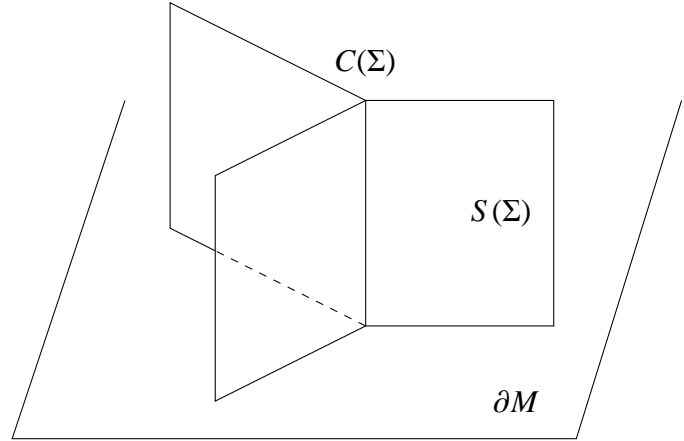  

\begin{defn}[Tribranched surfaces] \label{def:TBS}
Let $(M,\Sigma)$ be as above.
We call $\Sigma$ a {\em tribranched surface in $M$} if the following two conditions are satisfied:
\begin{enumerate}[labelindent=.5em, labelwidth=3.5em, labelsep*=1em,
 leftmargin=!, label=(TBS\arabic*)]
\item the intersection of $\Sigma$ and a sufficiently small tubular
      neighbourhood of $C(\Sigma)$ in $M$ is homeomorphic to $Y \times C(\Sigma)$;
\item the subsurface $S(\Sigma)$ is orientable.
\end{enumerate} 
\end{defn}

In the following, we will suppress the base point in the notation of
fundamental groups unless specifically noted.

\begin{defn}[Essential tribranched surfaces] \label{def:ETBS}
A tribranched surface $\Sigma$ in $M$ is said to be {\em essential} if the following three conditions are satisfied, other than the conditions (TBS1) and (TBS2) of Definition~\ref{def:TBS}:
\begin{enumerate}[labelindent=.5em, labelwidth=4em, labelsep*=1em,
 leftmargin=!, label=(ETBS\arabic*)]
\item for any component $N$ of $M(\Sigma)$, the homomorphism $\pi_1 (N)
      \to \pi_1(M)$ induced by the natural inclusion map
      $N\hookrightarrow M$ is not surjective; 
\item for any components $C$, $S$, $N$ of $C(\Sigma)$, $S(\Sigma)$, $M(\Sigma)$
      respectively, 
      if the homomorphisms $\pi_1 (C) \to \pi_1 (S)$ and $\pi_1 (S) \to \pi_1
      (N)$ are induced by  the natural
      inclusion maps, they are injective; 
\item no component of $\Sigma$ is contained in a $3$-ball in $M$ or a collar of $\partial M$.
\end{enumerate} 
\end{defn}

\begin{rem}
An essential surface (in the usual sense) in $M$ is regarded as an
 essential tribranched surface without any branched points. 
\end{rem}

\subsection{Complexes of groups associated to essential tribranched surfaces} \label{ssc:cgtbs}

It is well known that 
one may associate a {\em graph of groups} to an essential surface
 (without any branched points) embedded in a $3$-manifold $M$, which 
gives a {\em splitting} of the $3$\nobreakdash-manifold group $\pi_1(M)$; 
we refer the readers to \cite[Sections~1.4 and 1.5]{Shalen}. 
Then the concept of essential tribranched surfaces, which is a more general
notion including essential surfaces, should be 
closely related to the theory of {\em complexes of groups} of higher dimension.
Here we discuss the relation between them.

Now let $M$ be a $3$-manifold which is compact, connected, 
irreducible and orientable. Suppose that $M$ contains 
a tribranched surface $\Sigma$. 

\medskip
\paragraph{\bfseries The dual $2$-complex associated to $\Sigma$}
A cellular map between CW-complexes $f\colon X\rightarrow Y$ is 
said to be {\em combinatorial} if it maps each open cell of $X$
homeomorphically to an open cell of $Y$, and a CW-complex $X$ is said to
be {\em combinatorial} if, for each cell $e_\lambda$ of $X$ 
of dimension $n_\lambda$, the characteristic map 
$\varphi_\lambda \colon D^{n_\lambda} \rightarrow
X^{(n_\lambda-1)}$ of $e_\lambda$ is a combinatorial cellular map 
with respect to a certain cellular complex structure on the
$n_\lambda$\nobreakdash-dimensional closed unit ball $D^{n_\lambda}$. 
In this paragraph we associate to the pair $(M,\Sigma)$ 
a combinatorial CW-complex $Y_\Sigma=Y_{(M, \Sigma)}$ of dimension $2$. 
The construction of $Y_\Sigma$ which we shall explain below 
is a natural generalisation of a well-known construction 
of the {\em dual graph} of a bicollared surface 
contained in a $3$\nobreakdash-manifold. The readers are referred 
to the exposition \cite[Section~1.4]{Shalen}, for example, 
for details on the classical construction of dual graphs.

Recall that $C(\Sigma)$ denotes the set of branched points of $\Sigma$. 
Let $C$ be a connected component of $C(\Sigma)$, and 
let $D^2$ (resp.\ $\mathring{D}^2$) denote 
the closed unit disk $\{ \,z \in \mathbb{C} \mid
|z|\leq 1 \, \}$ (resp.\ the open unit disk $\{ \, z \in \mathbb{C} \mid
|z|<1 \, \}$). For each $C$, there exists a tubular neighbourhood 
$h_C \colon C \times D^2 \rightarrow M$ 
by virtue of the condition (TBS1) of tribranched surfaces; 
more specifically, $h_C$ induces a homeomorphism of $C\times
D^2$ onto a neighbourhood of $C$ in $M$ and satisfies $h_C(x,0)=x$ 
for each point $x$ of $C$. Furthermore $h\vert_{C\times
(\mathring{D}^2 \cap Y)}$ induces a homeomorphism 
of $C\times (\mathring{D}^2\cap Y)$ onto 
a regular neighbourhood of $C$ in $\Sigma$. We choose and fix 
such a tubular neighbourhood $h_C$ for each connected component $C$ 
of $C(\Sigma)$. We denote by $U_C$ (resp.\ $\bar{U}_C$) the 
open tubular neighbourhood $h_C(C\times \mathring{D}^2)$
(resp.\ the closed tubular neighbourhood $h_C(C\times D^2)$) of $C$ in $M$. 

Next let $S$ be an arbitrary connected 
component of $S(\Sigma)=\Sigma \setminus \bigcup_{C\in
\pi_0(C(\Sigma))} U_C$.  
The condition (TBS2) combined with the theory of 
regular neighbourhoods provides us with a homeomorphism 
$h_S \colon S\times [-1, 1] \rightarrow M$ 
onto a bicollar neighbourhood of $S$ 
in $M\setminus \bigcup_{C\in \pi_0(C(\Sigma))} U_C$; namely  
$h_S$ satisfies  $h_S(x,0)=x$ for each point $x$ of $S$ and 
$h_S(\partial S\times [-1,1])$ coincides with the intersection 
of $h_S(S\times [-1,1])$ and 
$\partial M \cup \bigcup_{C \in \pi_0(C(\Sigma))} \partial \bar{U}_C$.
We also choose and fix such a bicollar neighbourhood $h_S$ 
for each connected component $S$ of $S(\Sigma)$. 
We further assume that the closed sets
$h_S(S\times [-1,1])$ are pairwisely disjoint 
after replacing them by thinner ones if necessary.
We denote by $U_S$ (resp.\ $\bar{U}_S$) the subset $h_S(S \times
(-1,1))$ (resp.\ $h_S(S\times [-1,1])$) of $M$ which is an open (resp.\ a
closed) bicollar neighbourhood of $S$ in $M\setminus \bigcup_{C\in
\pi_0(C(\Sigma))} U_C$. 

We denote by $M(\Sigma)$ the complement of
$\bigcup_{C\in\pi_0(C(\Sigma))} U_C \cup \bigcup_{S \in
\pi_0(S(\Sigma))} U_S$ in $M$. 
Note that all of $\pi_0(C(\Sigma))$, $\pi_0(S(\Sigma))$ and
$\pi_0(M(\Sigma))$ are finite sets due to the compactness of $M$. 
We thus obtain a partition of $M$ into disjoint subsets:
\begin{align} \label{eq:disj}
M=\bigsqcup_{N \in \pi_0(M(\Sigma))} N \sqcup \underset{\substack{S \in
 \pi_0(S(\Sigma)) \\ t\in (-1,1)}}{\bigsqcup} h_S(S\times \{ t  \})
 \sqcup \underset{\substack{C
 \in \pi_0(C(\Sigma)) \\ s \in \mathring{D}^2}}{\bigsqcup} h_C(C \times
 \{  s \}).
\end{align}  
We use the notation $x\sim_\Sigma y$ 
to indicate that both of two points $x$ and $y$ of $M$  
are contained in one of the disjoint subsets 
occurring in the right hand side of (\ref{eq:disj}). Obviously 
$\sim_\Sigma$ defines an equivalence relation on $M$. Set
$Y_\Sigma=Y_{(M,\Sigma)}
=M/\! \! \sim_\Sigma$
and endow $Y_\Sigma$ with the quotient topology. 
One then easily observes that $Y_\Sigma$ 
is a combinatorial CW-complex of dimension $2$
whose $0$\nobreakdash-cells, $1$\nobreakdash-cells and 
$2$\nobreakdash-cells are labeled by elements of 
$\pi_0(M(\Sigma))$, $\pi_0(S(\Sigma))$ and $\pi_0(C(\Sigma))$
respectively. Moreover, for each $2$\nobreakdash-cell $e_C$ of
$Y_\Sigma$, the characteristic map $\varphi_C \colon D^2 \rightarrow 
Y_\Sigma^{(1)}$ is a combinatorial cellular map 
with respect to the following cellular complex structure on $D^2$:
\begin{align*}
\biggl. D^2=\mathring{D}^2 \sqcup \bigsqcup_{a=1}^3 \biggl\{\, e^{\sqrt{-1}\theta}
\; \biggr| \; \frac{2}{3}(a-1)\pi<\theta<\frac{2}{3}a\pi \, \biggr\} \sqcup
 \bigsqcup_{a=1}^3 \left\{ e^{\frac{2}{3}a\pi\sqrt{-1}} \right\}.
\end{align*}
Roughly speaking, this implies that each $2$\nobreakdash-cell of $Y_\Sigma$ 
may be identified with a $2$\nobreakdash-simplex whose boundaries
are appropriately glued to the $1$\nobreakdash-skeleton
$Y_\Sigma^{(1)}$ of $Y_\Sigma$. It is straightforward to check that a $1$\nobreakdash-cell $e_S$ 
(labeled by an element $S$ of
$\pi_0(S(\Sigma))$) occurs in the boundary of a $2$\nobreakdash-cell 
$e_C$ (labeled by an element $C$ of $\pi_0(C(\Sigma))$) if and only if 
the intersection of $\bar{U}_S$ and $\bar{U}_C$ is nonempty. 
Similarly a $0$\nobreakdash-cell $e_N$ (labeled by an element $N$ of
$\pi_0(M(\Sigma))$) occurs in the boundary of a $1$\nobreakdash-cell
$e_S$ (resp.\ a $2$\nobreakdash-cell $e_C$) if and only if the
intersection of $N$ and $\bar{U}_S$
(resp.\ $\bar{U}_C$) is nonempty. 
We call $Y_\Sigma$ the {\em dual {\upshape (2-)}complex} associated to the
tribranched surface $\Sigma$.
Figure~\ref{fig:dual} illustrates a
local picture of the dual $2$-complex $Y_\Sigma$ associated to a
tribranched surface $\Sigma$.

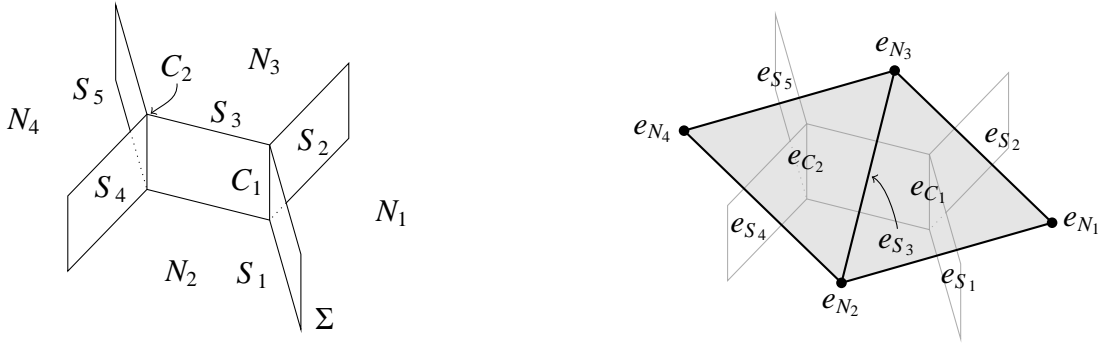
\begin{figure}[ht]
\centering
\begin{tabular}{ccc}
\begin{tikzpicture}
\coordinate (O) at (0,0);
\coordinate (A1) at (2.8,0.8);
\coordinate (P) at ($(O)!1!60:(A1)$);
\coordinate (A2) at ($(O)!1!60:(P)$);
\coordinate (G1) at ($1/3*(O)+1/3*(A1)+1/3*(P)$);
\coordinate (G2) at ($1/3*(O)+1/3*(A2)+1/3*(P)$);
\coordinate (M1) at ($(O)!.5!(A1)$);
\coordinate (M2) at ($(O)!.5!(A2)$);
\coordinate (N1) at ($(P)!.5!(A1)$);
\coordinate (N2) at ($(P)!.5!(A2)$);
\coordinate (C2) at ($(P-|G2)+(0.4,-0.1)$);

\path [name path=L1] ($(G1)-(0,0.5)$) -- ($(G1)!1.8!(N1)-(0,0.5)$);
\path [name path=L2] ($(G2)-(0,0.5)$) -- ($(G2)!1.8!(N2)-(0,0.5)$);
\path [name path=l1] ($(G1)+(0,0.5)$) -- ($(G1)!1.8!(M1)+(0,0.5)$);
\path [name path=l2] ($(G2)+(0,0.5)$) -- ($(G2)!1.8!(M2)+(0,0.5)$);
\path [name intersections={of=L1 and l1}];
\coordinate (Z1) at (intersection-1);
\path [name intersections={of=L2 and l2}];
\coordinate (Z2) at (intersection-1);

\draw ($(G1)+(0,0.5)$) -- ($(G2)+(0,0.5)$) -- ($(G2)+(0,-0.5)$) -- ($(G1)+(0,-0.5)$) --cycle;
\draw ($(G1)+(0,0.5)$) -- ($(G1)!1.8!(M1)+(0,0.5)$) -- ($(G1)!1.8!(M1)+(0,-0.5)$) -- ($(G1)+(0,-0.5)$);
\draw ($(G1)+(0,0.5)$) -- ($(G1)!1.8!(N1)+(0,0.5)$) -- ($(G1)!1.8!(N1)+(0,-0.5)$) -- (Z1);
\draw [dotted] (Z1) -- ($(G1)-(0,0.5)$);
\draw ($(G2)+(0,0.5)$) -- ($(G2)!1.8!(M2)+(0,0.5)$) -- ($(G2)!1.8!(M2)+(0,-0.5)$) -- ($(G2)+(0,-0.5)$);
\draw ($(G2)+(0,0.5)$) -- ($(G2)!1.8!(N2)+(0,0.5)$) -- ($(G2)!1.8!(N2)+(0,-0.5)$) -- (Z2);
\draw [dotted] (Z2) -- ($(G2)+(0,-0.5)$);

\node at (1.9,-0.6) {$\Sigma$};
\node at (G1) [left,xshift=2pt] {$C_1$};
\node at (C2) {$C_2$};
\node at (M1) [below, xshift=-13pt, yshift=-3pt] {$S_1$};
\node at (N1) [yshift=-2pt] {$S_2$};
\node at ($1/2*(P)$) [above, yshift=+11pt, xshift=7pt] {$S_3$};
\node at (M2) [xshift=3pt,yshift=3pt] {$S_4$};
\node at (N2) [left, xshift=-5pt] {$S_5$};
\node at (A1) {$N_1$};
\node at (O) {$N_2$};
\node at ($(P)+(0.4,0)$) {$N_3$};
\node at (A2) {$N_4$};

\draw [thin,->,>=latex] ($(C2)-(0,0.2)$) .. controls ($(C2)+(0.01,-0.4)$) and
 ($(G2)+(0.25,0.65)$) .. ($(G2)+(0.05,0.51)$); 
\end{tikzpicture}
& \hspace*{2cm} &
\begin{tikzpicture}
\coordinate (O) at (0,0);
\coordinate (A1) at (2.8,0.8);
\coordinate (P) at ($(O)!1!60:(A1)$);
\coordinate (A2) at ($(O)!1!60:(P)$);
\coordinate (G1) at ($1/3*(O)+1/3*(A1)+1/3*(P)$);
\coordinate (G2) at ($1/3*(O)+1/3*(A2)+1/3*(P)$);
\coordinate (M1) at ($(O)!.5!(A1)$);
\coordinate (M2) at ($(O)!.5!(A2)$);
\coordinate (N1) at ($(P)!.5!(A1)$);
\coordinate (N2) at ($(P)!.5!(A2)$);
\coordinate (S3) at ($7/11*(G1)$);

\path [name path=L1] ($(G1)-(0,0.5)$) -- ($(G1)!1.8!(N1)-(0,0.5)$);
\path [name path=L2] ($(G2)-(0,0.5)$) -- ($(G2)!1.8!(N2)-(0,0.5)$);
\path [name path=l1] ($(G1)+(0,0.5)$) -- ($(G1)!1.8!(M1)+(0,0.5)$);
\path [name path=l2] ($(G2)+(0,0.5)$) -- ($(G2)!1.8!(M2)+(0,0.5)$);
\path [name intersections={of=L1 and l1}];
\coordinate (Z1) at (intersection-1);
\path [name intersections={of=L2 and l2}];
\coordinate (Z2) at (intersection-1);

\fill [color=gray!50, opacity=.4] (O) -- (A1) -- (P) -- (A2) -- cycle;

\draw [color=gray!80, opacity=.8, thin] ($(G1)+(0,0.5)$) -- ($(G2)+(0,0.5)$) -- ($(G2)+(0,-0.5)$) -- ($(G1)+(0,-0.5)$) --cycle;
\draw [color=gray!80, opacity=.8,thin] ($(G1)+(0,0.5)$) -- ($(G1)!1.8!(M1)+(0,0.5)$) -- ($(G1)!1.8!(M1)+(0,-0.5)$) -- ($(G1)+(0,-0.5)$);
\draw [color=gray!80, opacity=.8,thin] ($(G1)+(0,0.5)$) -- ($(G1)!1.8!(N1)+(0,0.5)$) -- ($(G1)!1.8!(N1)+(0,-0.5)$) -- (Z1);
\draw [color=gray!80, opacity=.8,thin,dotted] (Z1) -- ($(G1)-(0,0.5)$);
\draw [color=gray!80, opacity=.8,thin] ($(G2)+(0,0.5)$) -- ($(G2)!1.8!(M2)+(0,0.5)$) -- ($(G2)!1.8!(M2)+(0,-0.5)$) -- ($(G2)+(0,-0.5)$);
\draw [color=gray!80, opacity=.8,thin] ($(G2)+(0,0.5)$) -- ($(G2)!1.8!(N2)+(0,0.5)$) -- ($(G2)!1.8!(N2)+(0,-0.5)$) -- (Z2);
\draw [color=gray!80, opacity=.8,thin, dotted] (Z2) -- ($(G2)+(0,-0.5)$);

\fill (A1) circle (2pt) node [right] {$e_{N_1}$};
\fill (O) circle (2pt) node [below] {$e_{N_2}$};
\fill (P) circle (2pt) node [above] {$e_{N_3}$};
\fill (A2) circle (2pt) node [left] {$e_{N_4}$};

\draw [thick] (O) -- (A1) -- (P) -- (A2) -- cycle;
\draw [thick] (O) -- (P);

\node at (M1) [below, xshift=5pt, yshift=-2pt] {$e_{S_1}$};
\node at (N1) [right, yshift=3pt] {$e_{S_2}$};
\node at (S3) [below, yshift=1pt] {$e_{S_3}$};
\node at (M2) [below, xshift=-5pt] {$e_{S_4}$};
\node at (N2) [above, xshift=-5pt] {$e_{S_5}$};  

\draw [thin,->,>=latex] ($(S3)-(0,0.05)$) .. controls ($(S3)+(-0.01,0.05)$) and
 ($1/2*(P)+(0.25,-0.1)$) .. ($1/2*(P)+(0.05,0)$);

\node at (G1) {$e_{C_1}$};
\node at (G2) {$e_{C_2}$};
\end{tikzpicture}
\end{tabular}
\caption{The dual 2-complex $Y_\Sigma$ associated to a tribranched
 surface $\Sigma$}
\label{fig:dual}
\end{figure}

\begin{rem}
The combinatorial CW-complex $Y_\Sigma$ is nothing but a
 ($M_0$-)polyhedral complex of dimension $2$ 
in the sense of \cite[Chapter~I.7, Definition~7.37]{BH} 
all of whose cells are (Euclidean) simplices. 
One often requires in many other references, however, 
that the intersection of two polytopes of a 
polyhedral complex should consist of a {\em single} common face of them 
unless it is empty. Therefore we here adopt the term a ``combinatorial
 CW-complex of dimension $2$'' rather than a ``polyhedral complex of
 dimension $2$'' in order to avoid terminological confusion. 
\end{rem}

\medskip
\paragraph{\bfseries The scwol associated to $\Sigma$} 
We next associate a scwol $\mathcal{Y}_\Sigma=\mathcal{Y}_{(M,\Sigma)}$
to the dual complex $Y_\Sigma$ in a canonical way (refer also to 
\cite[Chapter~III.$\mathcal{C}$, Section~1, Example~1.4~(2)]{BH}). 
By identifying each $2$\nobreakdash-cell 
of $Y_\Sigma$ with a $2$\nobreakdash-simplex, we may 
consider the {\em first barycentric subdivision} of the dual complex
$Y_\Sigma$ associated to $\Sigma$. We define the vertex set $V(\mathcal{Y}_\Sigma)$
of $\mathcal{Y}_\Sigma$ as the set of all cells of $Y_\Sigma$ 
(or equivalently, the set of the barycentres of all cells in
$Y_\Sigma$). Therefore every element of $V(\mathcal{Y}_\Sigma)$ 
is labeled by an element 
of the disjoint union of $\pi_0(C(\Sigma))$, $\pi_0(S(\Sigma))$ and
$\pi_0(M(\Sigma))$, which we denote by $\Lambda$ and regard as the index
set. We also define the edge set $E(\mathcal{Y}_\Sigma)$ 
of $\mathcal{Y}_\Sigma$ as the set of all $1$\nobreakdash-cells 
of the first barycentric subdivision of $Y_\Sigma$. 
One then readily observes that there exists an edge $a$ of
$\mathcal{Y}_\Sigma$ connecting two vertices $\sigma_\lambda$ and
$\sigma_\mu$ (labeled 
by elements $\lambda$ and $\mu$ of $\Lambda$ respectively) 
if and only if the cell $e_\mu$ of $Y_\Sigma$ labeled by $\mu$ 
occurs in the boundary of the cell $e_\lambda$ labeled by $\lambda$ 
or vice versa (in particular $\sigma_\lambda$ and $\sigma_\mu$ are
distinct). For an edge $a$ of $\mathcal{Y}_\Sigma$ connecting 
vertices $\sigma_\lambda$ and $\sigma_\mu$, 
we set $i(a)=\sigma_\lambda$ and $t(a)=\sigma_\mu$ if 
the cell $e_\mu$ of $Y_\Sigma$ corresponding to $\sigma_\mu$ 
occurs in the boundary of the cell $e_\lambda$ corresponding to
$\sigma_\lambda$.  
There exists a natural composition law among edges 
of $\mathcal{Y}_\Sigma$: namely $ab=c_{a,b}$ for each composable pair
$(a,b)$ in $E^{(2)}(\mathcal{Y}_\Sigma)$. Here $c_{a,b}$ denotes a unique edge
with $i(c_{a,b})=i(b)$ and $t(c_{a,b})=t(a)$ such that all of $a$, $b$
and $c_{a,b}$ occur in the boundary of a single $2$\nobreakdash-cell in
the first barycentric subdivision of $Y_\Sigma$ (see
Figure~\ref{fig:scwol} for details). 
Note that if a pair $(a, b)$ of edges of
$\mathcal{Y}_\Sigma$ is composable, $i(b)$ is labeled by 
an element of $\pi_0(C(\Sigma))$, $t(b)=i(a)$ is labeled by an
element of $\pi_0(S(\Sigma))$,
and $t(a)$ is labeled by an element of $\pi_0(M(\Sigma))$ respectively.
It is obvious that $\mathcal{Y}_\Sigma=(V(\mathcal{Y}_\Sigma),
E(\mathcal{Y}_\Sigma))$ equipped with the structures explained above 
satisfies all the conditions (Scw1)--(Scw4) of scwols 
(note that (Scw3) is now the empty condition). 

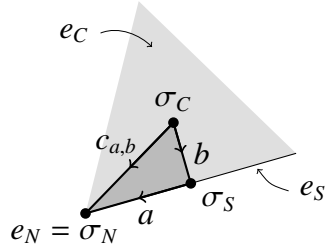
\begin{figure}[ht]
\centering
\begin{tikzpicture}
\coordinate (O) at (0,0) node [below] at (O) [xshift=-8pt] {$e_N=\sigma_N$};
\coordinate (A) at (2.8,0.8);
\coordinate (P) at ($(O)!1!60:(A)$);
\coordinate (M) at ($(O)!.5!(A)$);
\coordinate (G) at ($1/3*(O)+1/3*(A)+1/3*(P)$);

\coordinate (R) at ($1/8*(A)+3/4*(P)$);
\coordinate (T) at ($(O)!.8!(A)$);

\coordinate (C) at (0.2, 2.4) node at (C) [left] {$e_C$}; 
\coordinate (S) at (2.7, 0.3) node at (S) [right] {$e_S$};

\fill [color=gray!50, opacity=.5] (O) -- (A) -- (P) -- (O);
\fill [color=gray!60, opacity=.8] (O) -- (M) -- (G) -- (O);
\draw [thick] (O) -- (G) -- (M) -- cycle;
\draw [thick, ->,>=stealth] (G) -- ($(G)!.6!(M)$) node [right] {$b$};
\draw [thick, ->,>=stealth] (G) -- ($(G)!.5!(O)$) node [above, xshift=-5pt] {$c_{a,b}$};
\draw [thick, ->,>=stealth] (M) -- ($(M)!.5!(O)$) node [below, xshift=3pt] {$a$};
\draw (O) -- (A);

\node [right, yshift=-7pt] at (M) {$\sigma_S$};
\node [above] at (G) {$\sigma_C$};
\fill (O) circle (2pt) (G) circle (2pt) (M) circle (2pt);

\draw [->, ultra thin,>=latex] (C) .. controls ($(C)+(0.25,0.1)$) and ($(R)+(-0.15,0.15)$) .. (R);
\draw [->, ultra thin,>=latex] (S) .. controls ($(S)+(-0.25,0.08)$) and ($(T)+(0.2,-0.25)$) .. ($(T)+(0.05,-0.05)$); 
\end{tikzpicture}
\caption{The scwol structure of $\mathcal{Y}_\Sigma$ on a $2$-simplex of $Y_\Sigma$}
\label{fig:scwol}
\end{figure}

\medskip
\paragraph{\bfseries The complex of groups associated to $\Sigma$}
We now endow $\mathcal{Y}_\Sigma$ with the natural structure of a complex of groups.
Let us choose and fix a point $x_\lambda$ in $\lambda$ and 
define the local group $G^\Sigma_\lambda=G^\Sigma_{\sigma_\lambda}$ 
at $\sigma_\lambda$ 
as the fundamental group $\pi_1(\lambda, x_\lambda)$ (in the usual sense) for each element 
$\lambda$ of $\Lambda$; recall that the label set $\Lambda$ consists of 
connected subspaces of $M$. 
We next associate a group homomorphism 
$\psi^\Sigma_a \colon G^\Sigma_{i(a)} \rightarrow G^\Sigma_{t(a)}$ 
to each edge $a$. 
Let $\lambda$ and $\mu$ be elements of $\Lambda$ satisfying
$i(a)=\sigma_\lambda$ and $t(a)=\sigma_\mu$.
The existence of the edge $a$ implies that 
the cell $e_\mu$ of $Y_\Sigma$ corresponding to $\sigma_\mu$ occurs 
in the boundary of the cell $e_\lambda$ corresponding to $\sigma_\lambda$, 
and in particular the intersection of $\bar{U}_\lambda$
and $\bar{U}_{\mu}$ is nonempty as we have already remarked 
(with the convention $U_N=\bar{U}_N=N$ for 
each element $N$ of $\pi_0(M(\Sigma))$). 
We may thus 
take a path $l_{\lambda, \mu} \colon [0,1] \rightarrow \bar{U}_\lambda \cup \bar{U}_\mu$
satisfying $l_{\lambda, \mu}(0)=x_\lambda$ and $l_{\lambda, \mu}(1)=x_\mu$. We choose and fix such a path
$l_{\lambda, \mu}$ for each edge $a$ with $i(a)=\sigma_\lambda$ and
$t(a)=\sigma_\mu$.  
We may readily verify that $\mu$ is a deformation
retract of $\bar{U}_\lambda \cup \bar{U}_\mu$ 
by the definition of $\bar{U}_\lambda$  
as a tubular or bicollar neighbourhood, and therefore
 we may define a group homomorphism $\psi^\Sigma_a \colon
 G^\Sigma_\lambda \rightarrow G^\Sigma_\mu$ as the composition
\begin{align*}
G^\Sigma_\lambda =\pi_1(\lambda, x_\lambda) \rightarrow \pi_1(\bar{U}_\lambda \cup
 \bar{U}_\mu, x_\lambda) \xrightarrow{(\sharp)} \pi_1(\bar{U}_\lambda \cup \bar{U}_\mu, x_\mu)
 \xrightarrow{\sim} \pi_1(\mu, x_\mu)=G^\Sigma_\mu
\end{align*}
where the first map is induced from the natural inclusion $\lambda
\hookrightarrow \bar{U}_\lambda \cup \bar{U}_\mu$ and the last isomorphism is 
induced from a deformation retraction from $\bar{U}_\lambda \cup \bar{U}_\mu$ to
$\mu$. The middle map $(\sharp)$ is the change of base points with
respect to the path $l_{\lambda, \mu}$, or in other words, the map defined 
by $[c]\mapsto [l_{\lambda, \mu}^{-1}cl_{\lambda, \mu}]$.
Here we define the {\em concatenation} $l_1l_2$ of two paths $l_1, l_2
\colon [0,1]\rightarrow X$ in a topological space $X$ with $l_1(1) =
l_2(0)$ as follows: 
\begin{align*}
l_1l_2 (t) =\begin{cases}
l_1(2t) & \text{for } 0\leq t\leq \frac{1}{2}, \\
l_2(2t-1) & \text{for } \frac{1}{2}\leq t\leq 1.
\end{cases}
\end{align*}
We finally define a twisting element $g^\Sigma_{a,b}$ for each composable pair 
$(a, b)$ in $E^{(2)}(\mathcal{Y}_\Sigma)$. Suppose that the vertices $i(b)$,
$t(b)(=i(a))$ and $t(a)$ are labeled by elements $C$ of
$\pi_0(C(\Sigma))$, $S$ of $\pi_0(S(\Sigma))$ and $N$ of
$\pi_0(M(\Sigma))$ respectively. Then we define $g^\Sigma_{a,b}$ 
as the image of 
$[l_{S, N}^{-1}l_{C, S}^{-1}l_{C, N}]$ under the map $\pi_1(N\cup \bar{U}_S\cup \bar{U}_C,
x_N) \rightarrow \pi_1(N, x_N)=G^\Sigma_N$ induced by a deformation
retraction from $N\cup \bar{U}_S\cup \bar{U}_C$ to $N$. 
The twisted commutativity 
\begin{align} \label{eq:TC}
g^\Sigma_{a,b}\psi^\Sigma_{ab}([c])(g_{a,b}^{\Sigma})^{-1}=\psi^\Sigma_{a} \circ \psi^\Sigma_{b}([c])
\end{align}
straightforwardly holds for each element $[c]$ of $G^\Sigma_C=\pi_1(C, x_C)$.
We have now verified, combining Remark~\ref{rem:cocycle} with the
calculations above, that  
$G(\mathcal{Y}_\Sigma)=(\mathcal{Y}_\Sigma, \{ \psi^\Sigma_a \}_{a\in E(\mathcal{Y}_\Sigma)},
\{ g^\Sigma_{a,b} \}_{(a,b)\in E^{(2)}(\mathcal{Y}_\Sigma)})$ satisfies 
all the conditions of complexes of groups 
over $\mathcal{Y}_\Sigma$ except for injectivity of each $\psi^\Sigma_a$. 
If we further assume that the tribranched surface $\Sigma$ under
consideration is {\em essential}, we readily observe that 
every $\psi^\Sigma_a$ is injective due to the condition (ETBS2) and the twisted
commutativity (\ref{eq:TC}).
As a consequence, the triple $G(\mathcal{Y}_\Sigma)$ is indeed a $2$-complex 
of groups over the scwol $\mathcal{Y}_\Sigma$ when $\Sigma$ is
essential, which we call the {\em complex of groups associated to 
the essential tribranched surface $\Sigma$}.

\medskip
Let us choose and fix a point $x_0$ in
$M(\Sigma)$ and a path $l_\lambda \colon [0,1]\rightarrow M$ 
for each element $\lambda$ of $\Lambda$ such that $l_\lambda(0)=x_0$ and
$l_\lambda(1)=x_\lambda$. We define a morphism $\phi_\Sigma \colon
G(\mathcal{Y}_\Sigma)\rightarrow \pi_1(M,x_0)$ as follows. 
For each label $\lambda$, we define a group homomorphism $\phi_{\Sigma,\lambda
}\colon G^\Sigma_\lambda\rightarrow \pi_1(M,x_0)$ as the composition
\begin{align} \label{eq:identify}
G^\Sigma_\lambda=\pi_1(\lambda, x_\lambda) \rightarrow \pi_1(M, x_\lambda)
 \xrightarrow{(\flat)} \pi_1(M,x_0),
\end{align}
where the first map is induced by the natural inclusion $\lambda
\hookrightarrow M$ and the second map $(\flat)$ is the change of 
the base point with respect to the path $l_\lambda$, that is, the map 
defined as $[c]\mapsto [l_\lambda cl_\lambda^{-1}]$. 
We also associate an element $\phi_\Sigma(a)$ of $\pi_1(M, x_0)$ defined as
$[l_\mu l_{\lambda, \mu}^{-1}l_\lambda^{-1}]$ 
to each edge $a$ of $\mathcal{Y}_\Sigma$ 
when $i(a)$ and $t(a)$ are denoted by $\sigma_\lambda$ and $\sigma_\mu$ 
respectively. 
Then the twisted commutativity
\begin{align} \label{eq:twist}
\phi_\Sigma(a)\phi_{\Sigma,\lambda}([c])\phi_\Sigma(a)^{-1}=\phi_{\Sigma,\mu} \circ\psi^\Sigma_a([c])
\end{align}
straightforwardly holds for each element $[c]$ of $G_\lambda^\Sigma=\pi_1(\lambda,
x_\lambda)$ by the construction of $\phi_\Sigma(a)$. Furthermore, 
for each composable pair $(a,b)$ of $E^{(2)}(\mathcal{Y}_\Sigma)$ satisfying 
$i(b)=\sigma_C$, $t(b)=i(a)=\sigma_S$ and $t(a)=\sigma_N$, 
one may readily verify the equation
$\phi_{\Sigma,N}(g_{a,b}^\Sigma)\phi_\Sigma(ab)=\phi_\Sigma(a)\phi_\Sigma(b)$ 
by direct calculation. 
Therefore $\phi_\Sigma=(\{ \phi_{\Sigma, \lambda} \}_{\lambda \in \Lambda}, \{
\phi_\Sigma(a)\}_{a \in E(\mathcal{Y}_\Sigma)})$ defines a morphism from 
$G(\mathcal{Y}_\Sigma)$ to $\pi_1(M, x_0)$.

Let $\sigma_0$ denote the unique vertex of
$\mathcal{Y}_{\Sigma}$ whose corresponding connected component
$N_0$ of $M(\Sigma)$ contains $x_0$. 
Then the  morphism $\phi_\Sigma\colon G(\mathcal{Y}_\Sigma) \rightarrow \pi_1(M,x_0)$ induces a homomorphism
$\phi_{\Sigma,*} \colon 
\pi_1(G(\mathcal{Y}_\Sigma), \sigma_0) \to \pi_1(M, x_0)$ on the
fundamental groups (refer to \cite[Chapter~III.$\mathcal{C}$ Proposition~3.6]{BH}).

\begin{prop} \label{prop:pres}
Let $\Sigma$ be an essential tribranched surface in $M$.
Then the homomorphism $\phi_{\Sigma, *} \colon
 \pi_1(G(\mathcal{Y}_\Sigma),\sigma_0)\rightarrow \pi_1(M,x_0)$ induced
 from the morphism $\phi_\Sigma \colon G(\mathcal{Y}_\Sigma) \rightarrow
 \pi_1(M,x_0)$ is surjective.
\end{prop}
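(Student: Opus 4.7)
The plan is to represent every class in $\pi_1(M,x_0)$ by a loop in general position with respect to $\Sigma$, decompose it into subarcs each lying in a single piece of the partition (\ref{eq:disj}), and explicitly read off a $G(\mathcal{Y}_\Sigma)$\nobreakdash-loop whose image under $\phi_\Sigma$ recovers the original class.

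Given $[\gamma] \in \pi_1(M,x_0)$ represented by a smooth loop $\gamma$, the first step is to put $\gamma$ in general position. Since $C(\Sigma)$ is a properly embedded $1$\nobreakdash-submanifold of the $3$\nobreakdash-manifold $M$, a dimension count shows that a generic perturbation makes $\gamma$ disjoint from $C(\Sigma)$; transversality with $S(\Sigma)$ then yields a finite intersection, and a further small homotopy arranges that every intersection point lies on the core $S$ of some bicollar $\bar{U}_S$. One may then choose a subdivision $0 = t_0 < t_1 < \cdots < t_{2K} = 1$ of $[0,1]$ so that each subarc $\gamma_j := \gamma|_{[t_{j-1},t_j]}$ lies in a single piece of the partition: for odd $j$ the subarc lies in the closure of some component $N_{i_j} \in \pi_0(M(\Sigma))$, and for even $j$ it lies in some bicollar $\bar{U}_{S_{k_j}}$ crossing $S_{k_j}$ exactly once.

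The second step is to translate this decomposition into a $G(\mathcal{Y}_\Sigma)$\nobreakdash-loop. For each ``interior'' subarc $\gamma_j \subset N_{i_j}$, I insert auxiliary paths in $N_{i_j}$ connecting its endpoints to $x_{N_{i_j}}$; the resulting loop represents a local-group element $g_j \in G^\Sigma_{N_{i_j}} = \pi_1(N_{i_j}, x_{N_{i_j}})$. For each ``crossing'' subarc $\gamma_j \subset \bar{U}_{S_{k_j}}$, which passes from $N_{i_{j-1}}$ to $N_{i_{j+1}}$ across $S_{k_j}$, I encode the crossing as an edge-path $a_j^- \cdot (a'_j)^+$ in $\mathcal{Y}_\Sigma$, where $a_j \colon \sigma_{S_{k_j}} \to \sigma_{N_{i_{j-1}}}$ and $a'_j \colon \sigma_{S_{k_j}} \to \sigma_{N_{i_{j+1}}}$ are the two relevant edges, together with a local element $h_j \in G^\Sigma_{S_{k_j}}$ that corrects for the discrepancy between the fixed auxiliary paths $l_{S_{k_j}, N_{i_{j-1}}}$, $l_{S_{k_j}, N_{i_{j+1}}}$ and the actual subarc $\gamma_j$. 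Concatenating these data produces a $G(\mathcal{Y}_\Sigma)$\nobreakdash-loop $c$ based at $\sigma_0 = \sigma_{N_0}$.

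The verification that $\phi_{\Sigma,*}([c]) = [\gamma]$ in $\pi_1(M, x_0)$ then falls out of the definitions of $\phi_{\Sigma,\lambda}$ and $\phi_\Sigma(a)$, together with the twisted commutativity (\ref{eq:twist}): the auxiliary paths $l_\lambda$ and $l_{\lambda,\mu}$ introduced in the definition of $\phi_\Sigma$ cancel telescopically against those inserted in the previous step, leaving precisely $[\gamma]$. Since $\gamma$ was arranged to miss $C(\Sigma)$, no edges of $\mathcal{Y}_\Sigma$ terminating at a vertex $\sigma_C$ with $C \in \pi_0(C(\Sigma))$ need appear in $c$, so the construction reduces essentially to a graph-of-groups Seifert--van Kampen computation on the $1$\nobreakdash-skeleton of $\mathcal{Y}_\Sigma$. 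The main obstacle is the careful bookkeeping of these cancellations and the precise choice of the correcting local elements $h_j$ at the $\sigma_S$\nobreakdash-vertices; once this is checked, surjectivity of $\phi_{\Sigma,*}$ follows at once, as $[\gamma]$ was arbitrary.
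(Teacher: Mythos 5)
Your proposal is correct in outline, but it takes a genuinely different, more hands-on route than the paper. The paper's proof is soft: it applies the Seifert--van Kampen theorem to the decomposition of $M$ into the pieces indexed by $\Lambda$ to get a surjection from the free product $\ast_{\lambda\in\Lambda}\pi_1(\lambda,x_\lambda)$ onto $\pi_1(M,x_0)$, observes that this surjection is compatible with the natural map from that free product to $\pi_1(G(\mathcal{Y}_\Sigma),\sigma_0)$ furnished by the construction of the fundamental group of a complex of groups, and concludes at once. You instead reprove the relevant generation statement by hand: put a representative loop in general position (missing the codimension-two branch locus $C(\Sigma)$ and the tubes $\bar{U}_C$, transverse to $S(\Sigma)$), cut it at the crossings, and assemble an explicit $G(\mathcal{Y}_\Sigma)$-loop whose image under $\phi_{\Sigma,*}$ telescopes, via the definitions of $\phi_{\Sigma,\lambda}$ and $\phi_\Sigma(a)$, to the given class; since the loop avoids $C(\Sigma)$, only $\sigma_N$- and $\sigma_S$-vertices appear, so the computation is indeed a graph-of-groups one, as you note. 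What your route buys is an explicit preimage and independence from the ``natural quotient map'' from the free product (whose construction the paper leaves implicit, since local elements at vertices other than $\sigma_0$ must be conjugated to the base vertex by edge paths); what it costs is the bookkeeping you acknowledge, which does go through: inside a bicollar $\bar{U}_S\cong S\times[-1,1]$ a crossing arc is homotopic rel endpoints to a path in one boundary copy of $S$, followed by the fibre through $x_S$, followed by a path in the other copy, and the three resulting pieces retract into the adjacent $N$'s and $S$, producing the local elements and (if needed) the correcting $h_j\in\pi_1(S,x_S)$. Two small points to repair in a write-up: with the paper's conventions the scwol edges run from $\sigma_S$ to $\sigma_N$, so a crossing from $N_{i_{j-1}}$ to $N_{i_{j+1}}$ is encoded by $a_j^{+}$ followed by $(a'_j)^{-}$, not $a_j^{-}\cdot(a'_j)^{+}$; and since $l_{N_0}$ need not lie in $N_0$, the image of any $G(\mathcal{Y}_\Sigma)$-loop at $\sigma_0$ has the form $[l_{N_0}\,w\,l_{N_0}^{-1}]$ with $w$ a loop at $x_{N_0}$, so one should apply your construction to $l_{N_0}^{-1}\gamma l_{N_0}$ rather than to $\gamma$ itself; this yields $[\gamma]$ in the image and completes the argument.
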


\begin{proof}
Let $\mathop{\scalebox{1.5}{\raisebox{-0.2ex}{$\ast$}}}_{\lambda\in \Lambda}
 \pi_1(\lambda, x_\lambda)$ denote the free product of all local groups
 $\pi_1(\lambda, x_\lambda)$. The Seifert--van Kampen theorem then
 implies that the canonical homomorphisms $\pi_1(\lambda, x_\lambda)
 \rightarrow \pi_1(M,x_0)$ induce an surjection
 $\mathop{\scalebox{1.5}{\raisebox{-0.2ex}{$\ast$}}}_{\lambda\in \Lambda}
 \pi_1(\lambda, x_\lambda) \twoheadrightarrow \pi_1(M,x_0)$. On the
 other hand, there exists a natural quotient map $\mathop{\scalebox{1.5}{\raisebox{-0.2ex}{$\ast$}}}_{\lambda\in \Lambda}
 \pi_1(\lambda, x_\lambda) \rightarrow
 \pi_1(G(\mathcal{Y}_\Sigma), \sigma_0)$ by the construction of 
$\pi_1(G(\mathcal{Y}_\Sigma), \sigma_0)$. The homomorphism
 $\phi_{\Sigma,*}$ is compatible with these homomorphisms, 
and thus $\phi_{\Sigma,*}$ is also surjective.
\end{proof}

Due to the surjectivity of $\phi_{\Sigma,*}$, 
we may say that the $3$-manifold group 
$\pi_1(M,x_0)$ has a non-trivial presentation in terms of
$\pi_1(G(\mathcal{Y}_\Sigma), \sigma_0)$; in other words, 
$\pi_1(M,x_0)$ admits a {\em splitting} with respect to 
the fundamental group $\pi_1(G(\mathcal{Y}_\Sigma), \sigma_0)$ of 
the $2$-complex of groups $G(\mathcal{Y}_\Sigma)$. In the next
subsection we study when the induced homomorphism $\phi_{\Sigma,*}$ is
injective (and is thus an isomorphism). 

\subsection{Strongly essential tribranched surfaces} \label{ssc:SETBS}

In order to describe a condition for the induced homomorphism
$\phi_{\Sigma, *}$ to be injective, we here introduce the notion of {\em
strongly essential} tribranched surfaces.

\begin{defn}[Strongly essential tribranched surfaces]
Let $\Sigma$ be an essential tribranched surface contained in $M$. 
We say that $\Sigma$ is {\em strongly essential} 
if it satisfies the following additional condition besides the
 conditions (ETBS1), (ETBS2) and (ETBS3) in Definition~\ref{def:ETBS}: 
\begin{enumerate}[labelindent=.5em, labelwidth=4em, labelsep*=1em, leftmargin=!, label=(ETBS\arabic*)]
\setcounter{enumi}{3}
\item for each connected component $N$ of $M(\Sigma)$, the natural 
functorial homomorphism $\pi_1(N)\rightarrow \pi_1(M)$ is injective.
\end{enumerate}
\end{defn}

In the rest of this section 
we consider a strongly essential tribranched surface $\Sigma$ contained
in a $3$-manifold $M$.
Due to the condition (ETBS4) 
and the twisted commutativity (\ref{eq:twist}) 
one readily verifies that the morphism $\phi_\Sigma\colon
G(\mathcal{Y}_\Sigma) \rightarrow \pi_1(M,x_0)$ defined in the
previous subsection is {\em injective} on each
local group $G_\lambda^\Sigma$, and thus the $2$\nobreakdash-complex of groups
$G(\mathcal{Y}_\Sigma)$ is {\em developable} due to
Proposition~\ref{prop:development}.
Note that the surjectivity of $\phi_{\Sigma,*}$ (Proposition~\ref{prop:pres}) implies the {\em connectedness} of the development
 $D(\mathcal{Y}_\Sigma, \phi_\Sigma)$ of $G(\mathcal{Y}_\Sigma)$ with respect to the morphism $\phi_\Sigma$; see
 \cite[Chapter~III.$\mathcal{C}$ 3.14]{BH} for details.  
In the following we shall verify that the development
$D(\mathcal{Y}_\Sigma, \phi_\Sigma)$ 
is not only connected but also {\em simply connected} by reconstructing 
it in another {\em geometric} manner 
(compare to the construction of trees associated to hypersurfaces 
in \cite[Section~1.4]{Shalen}).

\medskip
\paragraph{\bfseries Geometric construction of a development}

Consider the universal cover $\widetilde{M}$ of $M$ and let
$\widetilde{\Sigma}$ denote the preimage of $\Sigma$ under the universal
covering map $p_{\widetilde{M}} \colon \widetilde{M}\rightarrow M$. 
Then one readily shows 
by using covering space theory 
that $\widetilde{\Sigma}$ is also a tribranched surface, and 
the preimage $C(\widetilde{\Sigma})$ of $C(\Sigma)$ 
under $p_{\widetilde{M}}$ coincides with the set 
of branched points of $\widetilde{\Sigma}$. 
Furthermore, for each 
connected component $\widetilde{C}$ of $C(\widetilde{\Sigma})$ 
in the preimage of a connected component $C$ of $C(\Sigma)$
 under  $p_{\widetilde{M}}$, 
there exists a unique tubular neighbourhood $h_{\widetilde{C}} \colon
\widetilde{C} \times D^2 \rightarrow \widetilde{M}$ of $\widetilde{C}$
in $\widetilde{M}$ satisfying 
$p_{\widetilde{M}}(h_{\widetilde{C}}(x,t))=h_C(p_{\widetilde{M}}(x),
t)$. We define $U_{\widetilde{C}}$ as an open subspace
$h_{\widetilde{C}}(\widetilde{C} \times \mathring{D}^2)$ and 
set $S(\widetilde{\Sigma})$ as $\widetilde{\Sigma} \setminus
\bigcup_{\widetilde{C} \in \pi_0(C(\widetilde{\Sigma}))}
U_{\widetilde{C}}$. 
Then, for each connected component $\widetilde{S}$ 
of $S(\widetilde{\Sigma})$ 
in the preimage of a connected component $S$ of $S(\Sigma)$ under $p_{\widetilde{M}}$, 
there exists a unique bicollar neighbourhood $h_{\widetilde{S}} \colon
\widetilde{S} \times [-1,1] \rightarrow \widetilde{M}\setminus
\bigcup_{\widetilde{C} \in \pi_0(C(\widetilde{\Sigma}))}
U_{\widetilde{C}}$ of $\widetilde{S}$ in $\widetilde{M}\setminus
\bigcup_{\widetilde{C} \in \pi_0(C(\widetilde{\Sigma}))}
U_{\widetilde{C}}$ satisfying
$p_{\widetilde{M}}(h_{\widetilde{S}}(x,t))=h_S(p_{\widetilde{M}}(x),
t)$. We define $U_{\widetilde{S}}$ as an 
open subspace $h_{\widetilde{S}}(\widetilde{S}\times(-1,1))$, 
and define $M(\widetilde{\Sigma})$ as 
the complement of $\bigcup_{\widetilde{C}\in
\pi_0(C(\widetilde{\Sigma}))} U_{\widetilde{C}} \cup
\bigcup_{\widetilde{S}\in
\pi_0(S(\widetilde{\Sigma}))}U_{\widetilde{S}}$ in $\widetilde{M}$.
We remark that $S(\widetilde{\Sigma})$ and $M(\widetilde{\Sigma})$
coincide with the preimages of $S(\Sigma)$ and $M(\Sigma)$
under $p_{\widetilde{M}}$ respectively. We now endow 
$\widetilde{M}$ with an equivalence relation 
$\sim_{\widetilde{\Sigma}}$ and construct a combinatorial CW-complex $Y_{\widetilde{\Sigma}}$
of dimension~$2$ as the quotient space
$Y_{\widetilde{\Sigma}}=\widetilde{M}/\sim_{\widetilde{\Sigma}}$, 
in the completely same manner as the construction of $Y_{\Sigma}$. 
By definition there exists a quotient map $r_{\widetilde{\Sigma}} \colon 
\widetilde{M}\rightarrow Y_{\widetilde{\Sigma}}$, 
and it is  easy to construct a continuous map $i_{\widetilde{\Sigma}} \colon
Y_{\widetilde{\Sigma}} \rightarrow \widetilde{M}$ such that 
$r_{\widetilde{\Sigma}} \circ i_{\widetilde{\Sigma}}$ 
is homotopic to the identity map on $Y_{\widetilde{\Sigma}}$. 
The composition of the induced maps 
\begin{align*}
\pi_1(Y_{\widetilde{\Sigma}}) \xrightarrow{i_{\widetilde{\Sigma},*}} \pi_1(\widetilde{M})
 \xrightarrow{r_{\widetilde{\Sigma},*}} \pi_1(Y_{\widetilde{\Sigma}})
\end{align*}
is thus the identity map. On the other hand the fundamental group
$\pi_1(\widetilde{M})$ of the universal cover $\widetilde{M}$ is trivial. 
Consequently $\pi_1(Y_{\widetilde{\Sigma}})$ is also trivial,
or in other words, $Y_{\widetilde{\Sigma}}$ is {\em simply
connected}. Note that the simply connected combinatorial
CW-complex $Y_{\widetilde{\Sigma}}$ admits an action of $\pi_1(M,x_0)$ 
induced from its natural action on $\widetilde{M}$. 
Moreover one readily checks by construction that 
the induced action of $\pi_1(M,x_0)$ on $Y_{\widetilde{\Sigma}}$ 
satisfies the following property; 
\begin{quotation}
$(\star)$ \quad an element $\gamma$ of $\pi_1(M,x_0)$  pointwisely fixes 
a cell $e_\lambda$ of dimension $1$ or $2$ if 
it stabilises $e_\lambda$.
\end{quotation}

Now let $\mathcal{Y}_{\widetilde{\Sigma}}$ denote the scwol associated to
$Y_{\widetilde{\Sigma}}$, which is constructed in the same manner as
$\mathcal{Y}_\Sigma$. Due to $(\star)$, the action of
$\pi_1(M,x_0)$ on $Y_{\widetilde{\Sigma}}$ induces its action 
on the scwol $\mathcal{Y}_{\widetilde{\Sigma}}$. Obviously by construction, the equalities $V(\mathcal{Y}_\Sigma)=V(\pi_1(M,x_0)\backslash \mathcal{Y}_{\widetilde{\Sigma}})$ and $E(\mathcal{Y}_\Sigma)=E(\pi_1(M,x_0)\backslash \mathcal{Y}_{\widetilde{\Sigma}})$ hold.

\begin{prop} \label{prop:sc}
The $2$-complex of groups $G(\mathcal{Y}_\Sigma)$ is isomorphic to
the complex of groups associated to the action of $\pi_1(M, x_0)$ 
on the scwol $\mathcal{Y}_{\widetilde{\Sigma}}$ constructed as above, and 
the morphism $\phi_\Sigma \colon G(\mathcal{Y}_{\Sigma}) \rightarrow
 \pi_1(M,x_0)$ coincides with the morphism associated to this action
 $($up to homotopy$)$. 
In particular, the scwol $\mathcal{Y}_{\widetilde{\Sigma}}$ is
 $\pi_1(M,x_0)$-equivariantly isomorphic to the development
 $D(\mathcal{Y}_\Sigma, \phi_\Sigma)$ of $G(\mathcal{Y}_\Sigma)$ with respect
 to $\phi_\Sigma$.
\end{prop}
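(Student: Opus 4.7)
The plan is to realise $\mathcal{Y}_{\widetilde{\Sigma}}$ together with the $\pi_1(M,x_0)$-action directly as the quotient datum from which $G(\mathcal{Y}_\Sigma)$ was built, so that both the associated complex of groups and the associated morphism coincide with $G(\mathcal{Y}_\Sigma)$ and $\phi_\Sigma$ after appropriate choices of lifts. The last assertion will then follow from the universal property of developments, combined with the simple connectivity of $Y_{\widetilde{\Sigma}}$ (hence of $\mathcal{Y}_{\widetilde{\Sigma}}$) already established in the paragraph preceding the proposition.

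First I would check that the $\pi_1(M,x_0)$-action on $\widetilde{M}$ preserves the stratification $\widetilde{M} \setminus \widetilde{\Sigma} \cup C(\widetilde{\Sigma}) \cup S(\widetilde{\Sigma})$ and the chosen tubular and bicollar neighbourhoods, so that it descends to a cellular action on $Y_{\widetilde{\Sigma}}$ and, via property $(\star)$, to a scwol action on $\mathcal{Y}_{\widetilde{\Sigma}}$. The projections $\widetilde{M} \to M$, $Y_{\widetilde{\Sigma}} \to Y_\Sigma$ and $\mathcal{Y}_{\widetilde{\Sigma}} \to \mathcal{Y}_\Sigma$ then identify $\mathcal{Y}_\Sigma$ with the quotient scwol $\pi_1(M,x_0) \backslash \mathcal{Y}_{\widetilde{\Sigma}}$, so the construction of a complex of groups from a group action on a scwol (recalled in Section~\ref{sc:CG}) applies verbatim.

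Next I would make the following canonical choices. Lift $x_0$ to a point $\tilde{x}_0 \in \widetilde{M}$ and then lift each $l_\lambda$ uniquely, obtaining a point $\tilde{x}_\lambda$ inside a preferred lift $\widetilde{\lambda}$ of $\lambda$; this produces a lift $\tilde{\sigma}_\lambda$ of $\sigma_\lambda$. For each edge $a$ of $\mathcal{Y}_\Sigma$ with $i(a)=\sigma_\lambda$ and $t(a)=\sigma_\mu$, lift $l_{\lambda,\mu}$ uniquely starting at $\tilde{x}_\lambda$ and read off the deck transformation $h_a \in \pi_1(M,x_0)$ carrying its endpoint to $\tilde{x}_\mu$, equivalently carrying $t(\tilde{a})$ to $\widetilde{t(a)}$; a direct calculation gives $h_a = [l_\mu l_{\lambda,\mu}^{-1} l_\lambda^{-1}] = \phi_\Sigma(a)$. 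The key step is to identify, via the change-of-basepoint map $(\flat)$ used in defining $\phi_{\Sigma,\lambda}$, the isotropy subgroup of $\tilde{\sigma}_\lambda$ in $\pi_1(M,x_0)$ with $\phi_{\Sigma,\lambda}(G_\lambda^\Sigma)$. This is exactly where the essentiality hypotheses enter: (ETBS4) for components of $M(\Sigma)$ and (ETBS2) for components of $S(\Sigma)$ and $C(\Sigma)$ guarantee that $\pi_1(\lambda,x_\lambda) \to \pi_1(M,x_\lambda)$ is injective, and then covering space theory identifies its image with the deck stabiliser of $\widetilde{\lambda}$.

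With these identifications in place, the formulae $\psi_a(g)=h_agh_a^{-1}$ and $g_{a,b}=h_ah_bh_{ab}^{-1}$ produced by the group-action recipe reduce, via the twisted commutativities~(\ref{eq:twist}) and the definitions of $\psi^\Sigma_a$ and $g^\Sigma_{a,b}$ in terms of retractions of $\bar{U}_\lambda \cup \bar{U}_\mu$ and $N \cup \bar{U}_S \cup \bar{U}_C$, to the structural data defining $G(\mathcal{Y}_\Sigma)$ and $\phi_\Sigma$. Finally, since $\mathcal{Y}_{\widetilde{\Sigma}}$ is connected, simply connected, and carries a $\pi_1(M,x_0)$-action whose associated complex of groups and associated morphism coincide with $G(\mathcal{Y}_\Sigma)$ and $\phi_\Sigma$, the $\pi_1(M,x_0)$-equivariant isomorphism $\mathcal{Y}_{\widetilde{\Sigma}} \cong D(\mathcal{Y}_\Sigma,\phi_\Sigma)$ follows from the universal property of the development recalled in \cite[Chapter~III.$\mathcal{C}$ Theorem~2.13]{BH}. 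The main obstacle I anticipate is purely bookkeeping: synchronising the two independent changes of basepoint (via $l_{\lambda,\mu}$ for $\psi^\Sigma_a$ and via $l_\lambda$ for $\phi_{\Sigma,\lambda}$) so that the two sets of structural data match on the nose rather than merely up to inner automorphism of the local groups.
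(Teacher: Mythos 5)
Your proposal is correct and follows essentially the same route as the paper's proof: the same choices of lifts $\tilde{x}_0$, $\tilde{l}_\lambda$, $\tilde{x}_\lambda$, the identification $h_a=[l_\mu l_{\lambda,\mu}^{-1}l_\lambda^{-1}]=\phi_\Sigma(a)$, the identification of isotropy groups with the images of the local groups via covering space theory and the injectivity coming from (ETBS2)/(ETBS4), and the appeal to \cite[Chapter~III.$\mathcal{C}$ Theorem~2.13~(2)]{BH} for the equivariant isomorphism with the development. The only cosmetic difference is that you invoke simple connectivity of $\mathcal{Y}_{\widetilde{\Sigma}}$ for the last assertion, which is not actually needed there (it is used only after the proposition), but this does no harm.
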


\begin{proof} 
Recall that $p_{\widetilde{M}}\colon \widetilde{M}\rightarrow M$ denotes
 the universal cover of $M$. Take an arbitrary point $\tilde{x}_0$ from
 $p_{\widetilde{M}}^{-1}(x_0)$. 
For each $\lambda$ in $\Lambda$, let $\tilde{l}_\lambda$ denote 
a unique lift of $l_\lambda$ to $\widetilde{M}$ satisfying
 $\tilde{l}_\lambda(0)=\tilde{x}_0$. We set
 $\tilde{x}_\lambda=\tilde{l}_\lambda(1)$ and denote by
 $\tilde{\lambda}$ a unique connected component of
 $p_{\widetilde{M}}^{-1}(\lambda)$ containing $\tilde{x}_\lambda$. 
Note that $\tilde{x}_\lambda$ is a lift of $x_\lambda$ to
 $\widetilde{M}$.
We shall verify that all the data 
of which the complex of groups $G(\mathcal{Y}_\Sigma)$
 consists (specifically the local groups $G^\Sigma_\lambda$, the
 local homomorphisms $\psi^\Sigma_a$ and the twisting 
elements $g^\Sigma_{a,b}$) are obtained from the action of
 $\pi_1(M,x_0)$ on the scwol $\mathcal{Y}_{\widetilde{\Sigma}}$. 

Via the monodromy homomorphism and the parallel translation, we may identify
 $\pi_1(M,x_0)$  with the automorphism group of
 $p_{\widetilde{M}}^{-1}(\{ x_\lambda \})$. On the other hand, 
since the map (\ref{eq:identify}) is injective due to (ETBS2) and (ETBS4), 
one readily sees that the restriction $p_{\widetilde{M}}\lvert_{\tilde{\lambda}}\colon \tilde{\lambda}\rightarrow \lambda$ becomes the universal covering map.
The isotropy subgroup $\pi_1(M,x_0)_{\tilde{\lambda}}$ of $\pi_1(M,x_0)$ at
 $\tilde{\lambda}$ is then identified with the group of covering automorphisms of 
$p_{\widetilde{M}}\colon \widetilde{M}\rightarrow M$ stabilising $\tilde{\lambda}$, and the latter group coincides with the image of $\pi_1(\lambda,
 x_\lambda)$ in $\pi_1(M, x_0)$ under the injection
 (\ref{eq:identify}). We  may thus conclude that 
$G^\Sigma_\lambda=\pi_1(\lambda, x_\lambda)$ is the isotropy subgroup 
of $\pi_1(M,x_0)$ at $\tilde{\lambda}$ (or $\sigma_{\tilde{\lambda}}$)
 with respect to the natural action of $\pi_1(M,x_0)$ on $\widetilde{M}$
 (or on $\mathcal{Y}_{\widetilde{\Sigma}}$).

Next we verify that, for an appropriate choice of $h_a\in \pi_1(M,x_0)$ for each edge $a$ of the scwol $\mathcal{Y}_\Sigma=\pi_1(M,x_0)\backslash \mathcal{Y}_{\widetilde{\Sigma}}$, the equalities $\psi_a(\,-\,)=h_a\,(-)\,h_a^{-1}$ and $g_{a,b}=h_ah_bh_{ab}^{-1}$ hold. Let $a$ be an edge of $\mathcal{Y}_\Sigma$ and denote its initial
 and terminal vertices by $\sigma_\lambda$ and $\sigma_\mu$
 respectively. 
Let $\tilde{a}$ be a unique edge of $\mathcal{Y}_{\widetilde{\Sigma}}$ 
which is a lift of $a$ and satisfies $i(\tilde{a})=\sigma_{\tilde{\lambda}}$. 
We may identify $\tilde{a}$ with a unique lift 
$\tilde{l}_{\lambda, \mu}$ of $l_{\lambda, \mu}$ to $\widetilde{M}$ 
satisfying $\tilde{l}_{\lambda, \mu}(0)=\tilde{x}_\lambda$ up to homotopy. 
Then the parallel translation along the path $\tilde{l}_{\lambda,\mu}^{-1}\tilde{l}_{\lambda}^{-1}\tilde{l}_\mu$ defines an element $\tilde{h}_a$ of $\mathrm{Aut}(p^{-1}_{\widetilde{M}}(\{x_\mu\}))\cong \pi_1(M,x_\mu)$ satisfying $\tilde{h}_a.t(\tilde{a})=\widetilde{t(a)}=\sigma_{\tilde{\mu}}$. 
We denote by $h_a=[l_\mu l_{\lambda,\mu}^{-1}l_\lambda^{-1}]$ the image of $\tilde{h}_a$ in $\pi_1(M, x_0)$ under the change of base points $\pi_1(M, x_\mu)
 \xrightarrow{(\flat)} \pi_1(M, x_0)$ appearing in (\ref{eq:identify}). 
Then the image of 
 an element $\xi$ of $G^\Sigma_{\lambda}=\pi_1(\lambda, x_\lambda)$ 
 in $\pi_1(M,x_0)$ under the injection (\ref{eq:identify}) is none other than
 $[l_\lambda]\xi[l_\lambda^{-1}]$, and we may calculate as
\begin{align*}
h_a \xi h_a^{-1} = [l_\mu l_{\lambda,
 \mu}^{-1}l_\lambda^{-1}]\bigl([l_\lambda]\xi[l_\lambda^{-1}]\bigr)[l_\lambda
 l_{\lambda, \mu} l_\mu^{-1}] =
 [l_\mu l_{\lambda, \mu}^{-1}]\xi[l_{\lambda, \mu} l_\mu^{-1}],
\end{align*}
which is regarded as an element 
of $\pi_1(\mu,x_\mu)$ defined by $[l_{\lambda, \mu}^{-1}]\xi[l_{\lambda, \mu}]$ via the injection (\ref{eq:identify}). We thus obtain $\psi_a(\xi)=[l_{\lambda,\mu}^{-1}]\xi[l_{\lambda,\mu}]=h_a\xi h_a^{-1}$. Similarly 
we may calculate as 
\begin{align*}
h_a h_b h_{ab}^{-1}
 &=[l_N l_{S, N}^{-1} l_S^{-1}][l_S l_{C, S}^{-1} l_C^{-1}][l_C l_{C, N} l_N^{-1}] \\
&= [l_N l_{S, N}^{-1} l_{C, S}^{-1} l_{C, N} l_N^{-1}] \qquad (\text{as an element
 of $\pi_1(M,x_0)$}) \\
&= [l_{S, N}^{-1} l_{C, S}^{-1} l_{C, N}]=g_{a,b}^\Sigma \qquad (\text{as an
 element of $\pi_1(N,x_N)$})
\end{align*}
for composable edges $a$ and $b$. Here $C$, $S$ and $N$ denote elements
 of $\pi_0(C(\Sigma))$, $\pi_0(S(\Sigma))$ and $\pi_0(M(\Sigma))$
 respectively such that $i(b)=\sigma_C$, $t(b)=i(a)=\sigma_S$
 and $t(a)=\sigma_N$ hold.  
Moreover the equality
\begin{align*}
\phi_{\Sigma}(a) =[l_\mu l_{\lambda, \mu}^{-1}l_\lambda^{-1}]=h_a
\end{align*}
obviously holds for an edge $a$ with $i(a)=\sigma_\lambda$ and
 $t(a)=\sigma_\mu$. Therefore, under the specific choices of a lift
 $\tilde{\sigma}$ of each vertex $\sigma$ of $\mathcal{Y}_{\Sigma}$ and 
an element $h_a$ of $\pi_1(M,x_0)$ for each edge $a$ of
 $\mathcal{Y}_\Sigma$ as 
\begin{align*}
\tilde{\sigma}_\lambda &=\sigma_{\tilde{\lambda}} \quad (\lambda \in
 \Lambda), & h_a &= [l_\mu l_{\lambda,\mu}^{-1}l_\lambda^{-1}] \quad
 \text{for an edge $a$ with }i(a)=\sigma_\lambda,\ t(a)=\sigma_\mu,
\end{align*}
the complex of groups $G(\mathcal{Y}_\Sigma)$ is indeed 
the one associated to the action of $\pi_1(M,x_0)$ on the
 scwol $\mathcal{Y}_{\widetilde{\Sigma}}$, and $\phi_\Sigma$ is the
 associated morphism.

The rest of the statement is then a direct consequence of
 \cite[Chapter~III.$\mathcal{C}$ Theorem~2.13 (2)]{BH}.
\end{proof}

Note that the combinatorial CW-complex $Y_{\widetilde{\Sigma}}$ of dimension~$2$ is 
regarded as the geometric realisation of the scwol
$\mathcal{Y}_{\widetilde{\Sigma}}$.
Since the geometric realisation $\lvert
\mathcal{Y}_{\widetilde{\Sigma}}\rvert=Y_{\widetilde{\Sigma}}$ 
of $\mathcal{Y}_{\widetilde{\Sigma}}$
is simply-connected as we have observed, so is 
$\mathcal{Y}_{\widetilde{\Sigma}}$ itself due to
Proposition~\ref{prop:fundamental}. 
Consequently the
scwol $\mathcal{Y}_{\widetilde{\Sigma}}$ is {\em connected} and {\em simply
connected}, and Proposition~\ref{prop:sc} implies that the development 
$D(\mathcal{Y}_\Sigma, \phi_\Sigma)$ of $G(\mathcal{Y}_\Sigma)$ 
with respect to $\phi_\Sigma$ is also connected and simply
connected. 
Then by basic facts
of covering space theory on complexes of groups (see
\cite[Chapter~III.$\mathcal{C}$ 3.14 (2)]{BH} for details), 
we obtain the following result.

\begin{thm}
Let $\Sigma$ be a strongly essential tribranched surface contained in 
a compact, connected, irreducible and orientable $3$-manifold $M$. 
Then the morphism $\phi_\Sigma \colon G(\mathcal{Y}_\Sigma) \rightarrow
 \pi_1(M,x_0)$ constructed in Section~$\ref{ssc:cgtbs}$ induces a
 group isomorphism $\phi_{\Sigma, *} \colon \pi_1(G(\mathcal{Y}_\Sigma),
 \sigma_0) \xrightarrow{\sim} \pi_1(M,x_0)$.
\end{thm}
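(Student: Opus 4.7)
The plan is to combine Proposition~\ref{prop:pres}, Proposition~\ref{prop:development} and Proposition~\ref{prop:sc} with the standard covering-space theory for complexes of groups. Surjectivity of $\phi_{\Sigma,*}$ is already at hand: it is precisely Proposition~\ref{prop:pres}, which requires only the essentiality (not the strong essentiality) of $\Sigma$. Hence the whole task reduces to proving injectivity.

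For injectivity, I would first extract from (ETBS4) and the twisted commutativity \eqref{eq:twist} the fact that $\phi_\Sigma$ is injective on every local group $G^\Sigma_\lambda$. Indeed, (ETBS4) directly gives injectivity of $\phi_{\Sigma,\lambda}$ whenever $\lambda$ is a component of $M(\Sigma)$; for $\lambda$ a component of $S(\Sigma)$ or $C(\Sigma)$, one chooses an edge $a$ from $\sigma_\lambda$ to some vertex $\sigma_\mu$ with $\mu\in\pi_0(M(\Sigma))$ and uses (ETBS2) to see that $\psi_a^\Sigma$ is injective, whence \eqref{eq:twist} rewrites $\phi_{\Sigma,\mu}\circ\psi_a^\Sigma$ as a conjugate of $\phi_{\Sigma,\lambda}$ and forces the latter to be injective as well. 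Proposition~\ref{prop:development} then guarantees that the $2$-complex of groups $G(\mathcal{Y}_\Sigma)$ is developable, and Proposition~\ref{prop:sc} identifies the development $D(\mathcal{Y}_\Sigma,\phi_\Sigma)$ with the $\pi_1(M,x_0)$-scwol $\mathcal{Y}_{\widetilde{\Sigma}}$ built from the universal cover.

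Next I would verify that $\mathcal{Y}_{\widetilde{\Sigma}}$ is simply connected. Its geometric realisation is the $2$-complex $Y_{\widetilde{\Sigma}}$, for which we have already observed in the discussion preceding Proposition~\ref{prop:sc} that the composite $r_{\widetilde{\Sigma},*}\circ i_{\widetilde{\Sigma},*}$ is the identity on $\pi_1(Y_{\widetilde{\Sigma}})$; since this group is a retract of the trivial group $\pi_1(\widetilde{M})$, it is itself trivial. Proposition~\ref{prop:fundamental} then promotes simple connectedness of $\lvert\mathcal{Y}_{\widetilde{\Sigma}}\rvert$ to simple connectedness of the scwol $\mathcal{Y}_{\widetilde{\Sigma}}$ itself.

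With the development both connected (automatic from surjectivity of $\phi_{\Sigma,*}$, cf.\ the remark after Proposition~\ref{prop:pres}) and simply connected, injectivity of $\phi_{\Sigma,*}$ is an immediate consequence of the covering theory for complexes of groups developed in \cite[Chapter~III.$\mathcal{C}$~3.14~(2)]{BH}: a morphism from a complex of groups to an abstract group whose associated development is a connected and simply connected scwol induces an isomorphism on fundamental groups. The main conceptual obstacle has already been overcome in proving Proposition~\ref{prop:sc}; the only new ingredient required here is the local injectivity of $\phi_\Sigma$, which is exactly where the strengthened hypothesis (ETBS4) intervenes in an essential way.
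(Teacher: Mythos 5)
Your proposal is correct and follows essentially the same route as the paper: local injectivity of $\phi_\Sigma$ from (ETBS4), (ETBS2) and the twisted commutativity \eqref{eq:twist}, developability via Proposition~\ref{prop:development}, the identification of the development with $\mathcal{Y}_{\widetilde{\Sigma}}$ from Proposition~\ref{prop:sc}, simple connectedness of $Y_{\widetilde{\Sigma}}$ by the retraction argument through the universal cover together with Proposition~\ref{prop:fundamental}, and the conclusion via \cite[Chapter~III.$\mathcal{C}$~3.14~(2)]{BH}. The only difference is that you spell out the local-injectivity step in more detail than the paper, which simply asserts it; your elaboration is consistent with the paper's construction.
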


\section{The Bruhat--Tits buildings $\mathcal{B}(\mathrm{SL}(n)_{/F})$ associated to the special linear groups} \label{sc:BT}

{\em Bruhat--Tits buildings} are combinatorial and topological objects
associated to reductive algebraic groups
defined over non-archimedean valuated fields,
which behave as Riemannian symmetric spaces in differential
geometry; in particular they admit natural ``transitive'' actions 
of the algebraic groups (to be precise, the natural group actions on the
Bruhat--Tits buildings are {\em strictly transitive}; 
see the end of Section~\ref{ssc:EB} for the definition of 
strict transitivity). 
The theory of Bruhat--Tits buildings has its origin in the study of 
Nagayoshi Iwahori and Hideya Matsumoto on the generalised Bruhat 
decomposition of $\mathfrak{p}$-adic Chevalley groups \cite{IM}, 
and then it has been elaborated by Fran\c{c}ois Bruhat and
Jacques Tits in a systematic and axiomatic way \cite{BT1,BT2}. 
The Bruhat--Tits tree, which appears 
in the work of Culler and Shalen \cite{CS}, is none other 
than the Bruhat--Tits building associated to 
the special linear group $\mathrm{SL}(2)$ of degree~$2$, and 
the Bruhat--Tits buildings associated 
to the special linear groups {\em of higher degree} play 
crucial roles in our extension of Culler and Shalen's results.
In this section we shall summarise basic notion 
on Bruhat--Tits buildings and their fundamental properties especially for 
the special linear groups. 

\subsection{Euclidean buildings and their contractibility} \label{ssc:EB}

We first review the axiomatic definition of (Euclidean) buildings 
after Tits and basic properties of Euclidean buildings. 
Refer, for instance, to \cite{AB, Garrett} for details 
of the contents of this subsection.

\begin{defn}[Chamber complexes]
Let $\Sigma$ be an abstract simplicial complex of finite dimension (that
 is, every simplex of $\Sigma$ is of finite dimension). We call $\Sigma$ 
a {\em chamber complex} if the following two conditions are fulfilled:
\begin{enumerate}[label=(CC\arabic*)]
\item every maximal simplex of $\Sigma$ has the same dimension $n$;
\item every two maximal simplices $C$ and $C'$ are connected by a {\em
      gallery}; that is, there exists a sequence 
      of maximal simplices $C_0=C,\ C_1,\dotsc, C_r=C'$ of $\Sigma$ such that 
      $C_{i-1}$ and $C_i$ are adjacent for each $1\leq i\leq r$.
\end{enumerate}
\end{defn}

Here we say that maximal simplices $C$ and $C'$ of $\Sigma$ are {\em
adjacent} if $C$ and $C'$ are distinct and contain a common
$(n-1)$\nobreakdash-dimensional face. A maximal simplex of $\Sigma$ is
called a {\em chamber} of $\Sigma$.
The {\em dimension} of $\Sigma$ is defined as the (same) dimension $n$ 
of a chamber of $\Sigma$.
A chamber complex $\Sigma$ of dimension~$n$ 
is said to be {\em thin} if every $(n-1)$\nobreakdash-dimensional
simplex of $\Sigma$ is a face of exactly two chambers. 

\begin{defn}[Buildings]
Let $\Delta$ be an abstract simplicial complex. We call $\Delta$ a {\em
$($simplicial, thick$)$ building} of dimension $n$
if there exists a family $\mathcal{A}$ of $n$\nobreakdash-dimensional 
 thin chamber subcomplexes of $\Delta$ and the pair $(\Delta,
 \mathcal{A})$ satisfies the following axioms:
\begin{enumerate}[label=(B\arabic*)]
\setcounter{enumi}{-1}
\item the complex $\Delta$ is (set-theoretically) expressed as the union
      of all elements of $\mathcal{A}$,
      and each $(n-1)$\nobreakdash-dimensional simplex of $\Delta$ is a
      face of at least three maximal simplices of $\Delta$ 
      (which are of dimension~$n$);
\item every two simplices of $\Delta$ 
      are contained in a single chamber subcomplex of
      $\Delta$ belonging to $\mathcal{A}$;
\item if $\Sigma$ and $\Sigma'$ are elements of $\mathcal{A}$ both of
      which contain two simplices $\sigma$ and $\tau$, there
      exists an isomorphism $\Sigma\xrightarrow {\sim} \Sigma'$ of 
      chamber complexes which fixes all the vertices of $\sigma$ and $\tau$.
\end{enumerate}
\end{defn}

A thin chamber subcomplex $\Sigma$ of $\Delta$ belonging to $\mathcal{A}$ is
called an {\em apartment} of $\Delta$, and a maximal simplex of $\Delta$
is called a {\em chamber} of $\Delta$. 
Among families of thin chamber subcomplexes of $\Delta$ satisfying 
all the axioms (B0), (B1) and (B2), there exists a unique maximal one
$\mathcal{A}^\mathrm{cpl}$ which is called 
the {\em complete system of apartments} of $\Delta$. 

It is well known that a building $\Delta$ of dimension~$n$ 
is a {\em colorable} chamber complex; 
namely there exists a (set-theoretical) $I_{n+1}$-valued function $\tau$
on the vertices of $\Delta$ such that the vertices of each chamber of
$\Delta$ are mapped bijectively onto $I_{n+1}$, 
where $I_{n+1}$ denotes a finite set of cardinality~$n+1$. 
Such a function $\tau$ is called a {\em type function on $\Delta$} 
(with values in $I_{n+1}$). We refer the reader to
\cite[Proposition~4.6]{AB} for details.

\begin{defn}[Euclidean buildings]
A building $\Delta$ of dimension~$n$ is 
said to be a {\em Euclidean building} $($or a {\em building of affine
 type}$)$ if the geometric realisation of each apartment of $\Delta$ is 
isomorphic to the standard tessellation of 
the $n$\nobreakdash-dimensional (real) Euclidean space 
by equilateral $n$\nobreakdash-dimensional 
simplices (more precisely, we require that 
each apartment should be isomorphic to a Euclidean Coxeter complex).
\end{defn}

Now let $\Delta$ be a Euclidean building. 
For arbitrary two points $x$ and $y$ of the geometric realisation $|\Delta|$
of $\Delta$, there exists an apartment $\Sigma_{(x,y)}$ of $\Delta$ whose
geometric realisation $|\Sigma_{(x,y)}|$ contains both of $x$ and $y$ due to the axiom (B1) of
buildings. We equip $|\Sigma_{(x,y)}|$  
with the standard Euclidean metric $d_{|\Sigma_{(x,y)}|}$, 
and define a real-valued function $d_{|\Delta|}$ on $\lvert \Delta
\rvert \times \lvert \Delta \rvert$ by
\begin{align*}
d_{|\Delta|} \colon |\Delta| \times |\Delta| \rightarrow
 \mathbb{R}_{\geq 0} \ ; \ (x,y) \mapsto d_{|\Sigma_{(x,y)}|}(x,y).
\end{align*}
Then $d_{|\Delta|}$ is a metric on the geometric
realisation $|\Delta|$ of $\Delta$ which is well defined independently 
of the choice of an apartment $\Sigma_{(x,y)}$ 
due to the axiom (B2) of buildings.
One readily checks that the topology of $|\Delta|$ determined by the metric
$d_{|\Delta|}$ coincides with the weak topology endowed on $|\Delta|$. 
Bruhat and Tits have verified that 
the metric space $(|\Delta|, d_{|\Delta|})$ is 
a CAT$(0)$~space; in particular $|\Delta|$ is {\em contractible}
(refer to \cite[Propositions 2.5.3.\ et 2.5.16]{BT1} for details; see
also \cite[the proof of Theorem~11.16]{AB}). 
The contractibility of Euclidean buildings shall play a crucial role 
in the construction of tribranched surfaces in Section~\ref{ssc:NAB}. 

We shall end this subsection by presenting several notion
concerning group actions on buildings. Let $G$ be an abstract group and 
$\Delta$ a building on which $G$ acts.
One easily verifies that the action of $G$ on
$\Delta$ induces actions of $G$ both on the complete system of apartments
$\mathcal{A}^\mathrm{cpl}$ of $\Delta$ and on the set of 
all the chambers of $\Delta$. 
An action of a group $G$ on a building $\Delta$ is said to be 
{\em strictly transitive} if $G$ acts transitively  
on the set of all pairs $(\Sigma, C)$ consisting of 
an apartment $\Sigma$ (belonging to $\mathcal{A}^\mathrm{cpl}$) and 
a chamber $C$ contained in $\Sigma$, and said to be {\em
type-preserving} if an arbitrary element $\gamma$ of $G$ maps a vertex of 
$\Delta$ to one of the same type (with respect to a certain type
function on $\Delta$).

\subsection{Combinatorial construction of $\mathcal{B}(\mathrm{SL}(n)_{/F})$} \label{ssc:BT1}

One of the most significant aspects in the theory of Euclidean buildings 
is the fact that one may associate in a canonical manner a Euclidean building
$\mathcal{B}(G_{/F})$ to a reductive algebraic group $G$ defined 
over a non-archimedean valuated field $F$. Furthermore $\mathcal{B}(G_{/F})$
admits a natural, strictly transitive action of $G(F)$.   
The existence of such Euclidean buildings was first observed 
in the pioneering work of Iwahori and Matsumoto \cite{IM} for Chevalley
groups (which are in particular split, semisimple and 
simply connected algebraic groups) defined 
over $\mathfrak{p}$\nobreakdash-adic fields.\footnote{More precisely,
Iwahori and Matsumoto have constructed a (generalised) BN pair
with respect to the {\em Iwahori subgroup} $B$ of a
$\mathfrak{p}$-adic Chevalley group in \cite[Proposition~2.2,
Theorem~2.22]{IM}. Although they have never mentioned buildings in
\cite{IM}, it is well known that one may associate buildings to 
such BN-pairs in a canonical way; see \cite[Theorem~6.56]{AB} for example.} 
Then Bruhat and Tits established
construction of such Euclidean buildings in \cite{BT1,BT2} 
for general reductive algebraic groups. The Euclidean building 
$\mathcal{B}(G_{/F})$ attached to $G_{/F}$ is therefore 
called the {\em Bruhat--Tits building} associated to $G_{/F}$. 

Bruhat and Tits's construction of $\mathcal{B}(G_{/F})$ 
utilising ``valuated root data'' is rather abstract and complicated, 
but limiting ourselves to the Bruhat--Tits building $\mathcal{B}(G_{/F})$ 
associated to the {\em special linear group} $G=\mathrm{SL}(n)$ 
defined over a discrete valuation field 
(which is a $\mathfrak{p}$-adic Chevalley group and thus has been already
 dealt with by Iwahori and Matsumoto in \cite{IM}),  
we may explicitly describe the combinatorial structure of
$\mathcal{B}(G_{/F})$ and the effect of the action of $G(F)$ on $\mathcal{B}(G_{/F})$ 
without introducing any root datum. 
We propose in this subsection a combinatorial description of 
the Bruhat--Tits building $\mathcal{B}(\mathrm{SL}(n)_{/F})$ 
associated to the special linear group $\mathrm{SL}(n)_{/F}$, 
mainly following \cite[Chapter~19]{Garrett}.
We shall only utilise the Bruhat--Tits buildings $\mathcal{B}(\mathrm{SL}(n)_{/F})$ 
associated to the special linear groups in our later applications.

\medskip
Let $F$ be a field equipped with a (normalised) 
discrete valuation $w \colon F^\times \rightarrow
\mathbb{Z}$. We do not require that 
the base field $F$ is complete with respect to 
the multiplicative valuation $\lvert\cdot\rvert_w$ 
associated to $w$ (indeed we shall later apply results 
of this subsection to a case where the base field is not complete). 
We denote the valuation ring of $F$ with respect to $w$ by $\mathcal{O}_w$. 
We fix a uniformiser $\varpi_w$ 
of the discrete valuation field $(F, w)$; in other words, 
we choose and fix a generator $\varpi_w$ of 
the maximal ideal of $\mathcal{O}_w$ (which is known 
to be a principal ideal due to basic facts of valuation theory).   

Let $V_n$ denote an $n$\nobreakdash-dimensional vector space over $F$ 
equipped with a basis $\{e_1, \dotsc, e_n \}$. We identify 
$V_n$ with $F^{\oplus n}$ (the $F$-vector space of
$n$\nobreakdash-dimensional column vectors) with respect to 
the specified basis $\{e_j \}_{j=1}^n$ and regard
the special linear group $\mathrm{SL}_n(F)$ 
as a subgroup of $\mathrm{Aut}_F(V_n)$. An
$\mathcal{O}_w$\nobreakdash-submodule $L$ of $V_n$ is 
called a {\em lattice} of $V_n$
if $L$ spans $V_n$ over $F$: $\langle L\rangle_F=V_n$. 
Every lattice of $V_n$ is then a free $\mathcal{O}_w$\nobreakdash-module 
of rank~$n$ by elementary divisor theory. Two lattices $L$ and $L'$ of
$V_n$ are 
said to be {\em homothetic} if there exists a nonzero element $a$ of
$F$ such that $L$ coincides with $aL'$ (as 
an $\mathcal{O}_w$\nobreakdash-submodule of $V_n$). 
The homothety relation is an equivalence relation on the set of all
lattices of $V_n$, and we define the vertex set 
$V(\mathcal{B}(\mathrm{SL}(n)_{/(F,w)}))$ of the Bruhat--Tits building
$\mathcal{B}(\mathrm{SL}(n)_{/(F,w)})$ as the set of homothety classes of lattices of $V_n$. 
We say that two distinct elements $v$ and $v'$ of
$V(\mathcal{B}(\mathrm{SL}(n)_{/(F,w)}))$ are {\em adjacent} 
if there exist lattices $L$ and $L'$ representing
the homothety classes $v$ and $v'$ respectively 
such that
\begin{align*}
\varpi_wL' \subsetneq L \subsetneq L'
\end{align*}
holds (as $\mathcal{O}_w$-submodules of $V_n$). 
We then define $\mathcal{B}(\mathrm{SL}(n)_{/(F,w)})$ as an abstract simplicial complex 
each of whose simplices is a finite subset $\{v_1, \dotsc, v_r\}$ of 
$V(\mathcal{B}(\mathrm{SL}(n)_{/(F,w)}))$
consisting of vertices adjacent to each other; in other
words, a set $\{v_1, \dotsc, v_r \}$ of $r$~vertices of
$\mathcal{B}(\mathrm{SL}(n)_{/(F,w)})$ forms an $r$\nobreakdash-simplex 
if and only if there exists a lattice $L_i$ 
representing $v_i$ for each $1\leq i\leq r$ such that
\begin{align*}
\varpi_wL_r \subsetneq L_1 \subsetneq L_2 \subsetneq \cdots \subsetneq L_r
\end{align*}
holds (after appropriate relabeling of the subindices). 
For an arbitrary $F$\nobreakdash-basis 
$\mathbf{f}=\{f_1, \dotsc, f_n\}$ of $V_n$, 
consider a subcomplex $\Sigma_\mathbf{f}$ of $\mathcal{B}(\mathrm{SL}(n)_{/(F,w)})$ 
generated by the homothety classes of lattices 
of the form $\sum_{j=1}^n \mathcal{O}_w\varpi_w^{m_j}f_j$ 
(each $m_j$ takes an arbitrary integer). The subcomplex $\Sigma_\mathbf{f}$ is 
indeed a thin chamber complex of dimension~$n-1$.
Denote by $\mathcal{A}$ 
the family of the subcomplexes $\Sigma_\mathbf{f}$ of $\mathcal{B}(\mathrm{SL}(n)_{/(F,w)})$ 
indexed by an $F$\nobreakdash-basis $\mathbf{f}$ of $V_n$. 
Then we may readily verify that the pair $(\mathcal{B}(\mathrm{SL}(n)_{/(F,w)}), \mathcal{A})$ 
satisfies all the axioms (B0), (B1) and (B2) of buildings; 
see \cite[Chapter~19.2]{Garrett} for details. 
The special linear group $\mathrm{SL}_n(F)$ 
acts on the set of lattices of $V_n$ in an obvious manner; 
namely, for a lattice $L=\sum_{j=1}^n \mathcal{O}_w
f_j$ with an $\mathcal{O}_w$\nobreakdash-basis $\{f_1, \dotsc, f_n \}$, 
we define $gL$ as an $\mathcal{O}_w$\nobreakdash-submodule of $V_n$ 
spanned by $\{g(f_1), \dotsc, g(f_n)\}$ (here we regard
$g$ as an element of $\mathrm{Aut}_F(V_n)$). This defines 
an action of $\mathrm{SL}_n(F)$ 
on $V(\mathcal{B}(\mathrm{SL}(n)_{/(F,w)}))$, 
which is naturally extended to an action of $\mathrm{SL}_n(F)$ on $\mathcal{B}(\mathrm{SL}(n)_{/(F,w)})$.
One of the significant features of the action of $\mathrm{SL}_n(F)$ on
$\mathcal{B}(\mathrm{SL}(n)_{/(F,w)})$ is that 
it is a strictly transitive and type-preserving action. 
In particular, an element $\gamma$ of $\mathrm{SL}_n(F)$ fixes {\em all the vertices} 
of a chamber $C$ whenever $\gamma$ stabilises $C$. 

In order to see that it is type-preserving, one has only to 
check that an association of a value $\tau(v)=(w(\det g_v)\!\!\mod n)$ 
to each vertex $v$ of
$\mathcal{B}(\mathrm{SL}(n)_{/(F,w)})$ defines a type function $\tau$ on
$\mathcal{B}(\mathrm{SL}(n)_{/(F,w)})$ with values in $\mathbb{Z}/n\mathbb{Z}$.
Here $g_v$ is an element of $\mathrm{Aut}_F(V_n)$  satisfying
$L=g_v(L_0)$ for a certain lattice $L$ representing $v$, and $L_0$
denotes the standard lattice of $V_n$
defined as $L_0=\sum_{j=1}^n \mathcal{O}_we_j$.
Then the type of a vertex of
$\mathcal{B}(\mathrm{SL}(n)_{/(F,w)})$ does not change under the action of an
element $\gamma$ of $\mathrm{SL}_n(F)$ since one has
\begin{align*}
\tau(\gamma v)=(w(\det (\gamma g_v))\!\! \mod n)=(w(\det \gamma)\!\!
 \mod n)+ \tau(v)=\tau(v)
\end{align*}
by using $\det(\gamma)=1$.

\begin{rem}
The Bruhat--Tits building  $\mathcal{B}(\mathrm{GL}(n)_{/(F,w)})$ 
associated to the {\em general linear group}
 $\mathrm{GL}(n)_{/(F,w)}$ is completely the same one as
 $\mathcal{B}(\mathrm{SL}(n)_{/(F,w)})$. However, the
 natural action of $\mathrm{GL}_n(F)$ on $\mathcal{B}(\mathrm{GL}(n)_{/(F,w)})$ does
 {\em not} preserve the type function $\tau(v)=(w(\det g_v)\!\! \mod n)$
introduced above since the $\mathbb{Z}$-valued function $w\circ \det$ on $\mathrm{GL}_n(F)$ takes
 arbitrary value (indeed $\mathrm{GL}_n(F)$ acts transitively on the vertex set
 $V(\mathcal{B}(\mathrm{GL}(n)_{/(F,w)}))$). 
In order to guarantee that the natural action on the
 Bruhat--Tits building is type-preserving, we deal with the Bruhat--Tits
 building associated to the special linear group $\mathrm{SL}(n)$ rather than
 the Bruhat--Tits building associated to the general linear group
 $\mathrm{GL}(n)$. We shall effectively utilise 
the type-preserving property of the action when we consider 
the quotient complex 
$\mathcal{B}_{n,\widetilde{D},\tilde{y}}/\pi_1(M,x_0)$ in Section~\ref{ssc:construction}.
\end{rem}

\begin{exmp}[Bruhat--Tits trees]
In the case where $n$ equals $2$, the construction of $\mathcal{B}(\mathrm{SL}(2)_{/(F,w)})$ explained above is 
none other than the classical construction of 
the {\em Bruhat--Tits tree} associated to $\mathrm{SL}(2)_{/F}$, 
which is, for example, presented in \cite[Chapitre~II, Section~1]{Serre}. 
Note that the Bruhat--Tits trees play crucial roles in the original 
work of Culler and Shalen \cite{CS}.
\end{exmp}

\section{Construction of essential tribranched surfaces} \label{sc:construction}

We shall establish our construction of essential
tribranched surfaces in this section. There are two technical hearts 
in the construction. One is to obtain a {\em nontrivial} 
type-preserving action of the $3$-manifold group on the 
Bruhat--Tits building associated to the special linear group $\mathrm{SL}(n)$ 
by utilising geometry of character varieties of higher degree. 
After a brief review on character varieties of higher degree in
Section~\ref{ssc:charvar}, we explain how to obtain
such a nontrivial action in
Section~\ref{ssc:NAB}. The other is to construct a non-empty 
tribranched surfaces from such a nontrivial action.  
In Section~\ref{ssc:construction}, we put this procedure in practice,
and then modify the obtained tribranched surfaces to be {\em essential} by certain 
local surgery.

\subsection{$\mathrm{SL}_n(\mathbb{C})$-character variety} \label{ssc:charvar}

We begin with briefly reviewing 
the {\em $\mathrm{SL}_n(\C)$-character variety} of a finitely generated group.
See Lubotzky and Magid~\cite{LM} for more details.

Let $\pi$ be a finitely generated group.
We denote by $R_n(\pi)$ the set $\hom(\pi, \mathrm{SL}_n(\C))$ of all the
$\mathrm{SL}_n(\C)$\nobreakdash-representations of $\pi$, which is an affine algebraic set.
The algebraic group $\mathrm{SL}(n)_{/\C}$ acts on $R_n(\pi)$ by conjugation.
We denote by $X_n(\pi)$ the geometric invariant theoretical 
quotient of $R_n(\pi)$ with respect to this action, which is
called the {\em $\mathrm{SL}_n(\C)$-character variety} of $\pi$.
We define the {\em character}
$\chi_\rho \colon \pi \to \C$ of a $\mathrm{SL}_n(\C)$-representation $\rho
\colon \pi\to \mathrm{SL}_n(\C)$ as
$\chi_\rho(\gamma) = \tr \rho(\gamma)$ for each element $\gamma$ in $\pi$.
The quotient variety $X_n(\pi)$ is known to be realised as the set of characters
$\chi_\rho$ (in the set-theoretical sense), 
and under this identification the quotient map
$R_n(\pi) \to X_n(\pi)$ is regarded 
as the map which sends $\rho$ to $\chi_\rho$.
For an element $\gamma$ of $\pi$, 
we define the {\em trace function} $I_\gamma \colon X_n(\pi) \to \C$
associated to $\gamma$
as $I_\gamma(\chi_\rho) = \tr \rho(\gamma)$, which is a regular function
on $X_n(\pi)$.

The following theorem is a direct consequence of the result of
Claudio Procesi~\cite{Procesi}.
\begin{thm}[Procesi, {\cite[Theorem~3.4 (a)]{Procesi}}] \label{thm:Procesi}
Let $\gamma_1, \dots, \gamma_m$ be a generator system of $\pi$.
Then the trace functions 
$\{ I_{\gamma_{i_1} \dots \gamma_{i_k}} \}_{1 \leq i_1, \dots, i_k \leq m}^{1 \leq k \leq 2^n - 1}$
give affine coordinates of $X_n(\pi)$.
\end{thm}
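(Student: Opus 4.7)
The plan is to reduce the statement to the classical first fundamental theorem for invariants of matrices, following the standard route via geometric invariant theory. First, I would identify the coordinate ring of $X_n(\pi)$ with the ring of conjugation invariants $\C[R_n(\pi)]^{SL_n(\C)}$; this identification is built into the definition of $X_n(\pi)$ as a GIT quotient of the affine algebraic set $R_n(\pi)=\hom(\pi,SL_n(\C))$ by the reductive group $SL_n(\C)$ acting by conjugation, and it matches the set-theoretic description of $X_n(\pi)$ as the set of characters $\chi_\rho$ recalled in the paragraph preceding the theorem.

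Second, evaluation at the generators $\gamma_1,\dots,\gamma_m$ realises $R_n(\pi)$ as a closed $SL_n(\C)$-stable subvariety of $SL_n(\C)^m$, cut out by the relations of $\pi$ applied coordinate-wise. The restriction map $\C[SL_n(\C)^m]^{SL_n(\C)}\twoheadrightarrow \C[R_n(\pi)]^{SL_n(\C)}$ is therefore surjective, so it suffices to prove that the trace functions of words in the coordinate matrices of length at most $2^n-1$ already generate the invariant ring of $SL_n(\C)^m$ under simultaneous conjugation; pulling these back along the quotient map and along the inclusion $R_n(\pi)\hookrightarrow SL_n(\C)^m$ then yields the desired generators of $\C[X_n(\pi)]$.

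Third, I would invoke Procesi's first fundamental theorem for matrix invariants: the ring $\C[M_n(\C)^m]^{GL_n(\C)}$ is generated as a $\C$-algebra by the traces $\tr(A_{i_1}\cdots A_{i_k})$ of arbitrary monomials in the coordinate matrices. Restricting from $M_n(\C)$ to $SL_n(\C)$ preserves this generation property, and since the inverse of any $A\in SL_n(\C)$ can be written as a polynomial in $A$ of degree at most $n-1$ with coefficients that are polynomials in $\tr(A),\dots,\tr(A^{n-1})$ by the Cayley--Hamilton theorem together with $\det A=1$, one can replace any occurrence of an inverse letter $\gamma_i^{-1}$ by a positive word, at the cost of traces of strictly shorter monomials.

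The main obstacle, and the genuine technical heart of Procesi's theorem, is the length bound $2^n-1$: one must show that every trace $\tr(A_{i_1}\cdots A_{i_k})$ with $k>2^n-1$ can be expressed as a polynomial in traces of strictly shorter monomials. I would take this as the concrete form of Procesi's degree estimate (a non-commutative polynomial identity obtained from iterated applications of Cayley--Hamilton and multilinearisation), and iterate it to reduce every trace generator to a trace of a word of length at most $2^n-1$. Combined with the previous two paragraphs this shows that the finite collection displayed in the statement generates $\C[X_n(\pi)]$, hence gives a closed embedding of $X_n(\pi)$ into an affine space --- i.e. a system of affine coordinates. The hard part is not the GIT framework but this sharp bound from non-commutative invariant theory.
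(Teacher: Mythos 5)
Your proposal is correct and matches the paper's treatment: the paper offers no proof of this statement, simply recording it as a direct consequence of Procesi's first fundamental theorem for matrix invariants, and your reduction (GIT identification of $\C[X_n(\pi)]$ with $\C[R_n(\pi)]^{SL_n(\C)}$, surjectivity of restriction of invariants from $SL_n(\C)^m$, and the degree bound $2^n-1$ taken from Procesi) is exactly the standard deduction being invoked. Deferring the bound $2^n-1$ to Procesi is consistent with how the paper itself uses the result, so there is no gap to repair.
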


For a compact $3$-manifold $M$ we abbreviate $X_n(\pi_1(M))$ as $X_n(M)$ 
to simplify notation.

\begin{rem}
Let $M$ be a hyperbolic $3$-manifold with $l$ torus cusps.
Then we may consider a lift $\rho_0 \colon \pi_1(M) \to \mathrm{SL}_2(\C)$ of 
the holonomy representation with respect to the hyperbolic structure of
 $M$~\cite[Proposition~3.1.1]{CS}.
Menal-Ferrer and Porti~\cite{MFP1,MFP2} showed for general $n$ 
the following facts;
\begin{enumerate}[label=\roman*), widest=ii]
\item the character variety $X_n(M)$ is smooth at $\chi_{\iota_n \circ \rho_0}$;
\item the irreducible component of $X_n(M)$ containing
$\chi_{\iota_n \circ \rho_0}$ is of dimension~$l(n-1)$.
\end{enumerate}
Here $\iota_n \colon \mathrm{SL}_2(\C) \to \mathrm{SL}_n(\C)$ denotes
an (arbitrary) irreducible representation.
They also gave explicit local coordinates around
$\chi_{\iota_n \circ \rho_0}$~\cite{MFP2}. 
When $n$ equals $2$, these results had been already proved by Kapovich \cite{Kapovich} (see
 also Bromberg \cite{Bromberg}).
\end{rem}

\begin{defn}[Ideal points] \label{def:ideal}
Suppose that $X_n(\pi)$ is of positive dimension and 
let us take an affine curve $C$ contained in $X_n(\pi)$. 
Let $\widetilde{C}\rightarrow C$ denote a desingularisation of 
a projective completion of $C$,
so that $\widetilde{C}$ is a smooth projective model of $C$.
A closed point $\tilde{x}$ of $\widetilde{C}$ is
 called an {\em ideal point of $C$} if the birational map
 $\widetilde{C}\rightarrow C$ above is undefined at $\tilde{x}$.
\end{defn}

Note that the notion of ideal points does not depend on the choices of
projective completions and desingularisations in the definition (see
\cite[Section~1.3]{CS} for details). 
We also remark that 
there are only finitely many ideal points  of $C$ on $\widetilde{C}$.

\subsection{Nontrivial actions on Bruhat--Tits buildings} \label{ssc:NAB}

We discuss in this subsection how to obtain a nontrivial,
type-preserving action of a finitely generated group $\pi$ 
on a Euclidean building. 
Such a nontrivial action gives rise to a nontrivial splitting 
of $\pi$, which shall play a central role in the construction 
of tribranched surfaces when $\pi$ is a $3$\nobreakdash-manifold group. 
Similarly to the arguments in \cite[Section~2.2]{CS}, we utilise geometry 
of the character variety associated to $\pi$ in order to obtain 
such an action.

Assume that the character variety $X_n(\pi)$ is of positive dimension
and consider an affine curve $C$ in $X_n(\pi)$. 
Then we may take a lift $D$ of $C$ in $R_n(\pi)$. Namely $D$
is an affine curve contained in the inverse image of $C$ under the natural 
projection $\mathrm{pr}_n \colon R_n(\pi) \rightarrow X_n(\pi)$ 
such that the restriction $\mathrm{pr}_n\vert_D$ is not a constant morphism. 
The projection $\mathrm{pr}_n\vert_D \colon D\rightarrow C$ induces 
a (surjective) regular morphism $\mathrm{pr}_n\vert_{D}^{\sim} \colon \widetilde{D}
\rightarrow \widetilde{C}$ on the smooth projective models of $C$ and $D$,
which sends the ideal points of $\widetilde{D}$ to those of
$\widetilde{C}$. 

Recall that, by the definition of $R_n(\pi)$, 
each closed point $y$ of the affine algebraic set $R_n(\pi)$ corresponds to an
$\mathrm{SL}_n(\C)$-representation $\rho_y\colon \pi \to \mathrm{SL}_n(\C)$. We denote by $\C[R_n(\pi)]$ the affine
coordinate ring of $R_n(\pi)$.
Let $\rho_\mathrm{taut} \colon \pi \rightarrow \mathrm{SL}_n(\C[R_n(\pi)])$ denote 
the tautological representation of $\pi$; 
namely $\rho_\mathrm{taut}(\gamma)$ is a regular
$\mathrm{SL}_n(\mathbb{C})$\nobreakdash-valued function on $R_n(\pi)$ for each
element $\gamma$ of $\pi$ whose value at a closed point $y$ of
$R_n(\pi)$ is $\rho_y(\gamma)$. 
Let $\rho_{\widetilde{D}} \colon \pi \rightarrow
\mathrm{SL}_n(\C(D))$ denote the composition of the tautological representation
$\rho_\mathrm{taut} \colon \pi \rightarrow \mathrm{SL}_n(\C[R_n(\pi)])$
with 
\begin{align*}
\mathrm{SL}_n(\C[R_n(\pi)])\rightarrow \mathrm{SL}_n(\C[D]) \hookrightarrow \mathrm{SL}_n(\C(D)),
\end{align*}
where the first map is induced by the natural embedding $D\hookrightarrow
R_n(\pi)$. In the construction of $\rho_{\widetilde{D}}$, we identify
$\C(D)$ with the field of rational functions of $\widetilde{D}$ 
due to the fact that $\widetilde{D}$ is birational to $D$ (this 
gives justification to the notation $\rho_{\widetilde{D}}$).  
We  call $\rho_{\widetilde{D}}$ the {\em
tautological representation associated to the affine curve $D$}. 
Now recall that a closed point $y$ of the smooth projective curve 
$\widetilde{D}$ (possibly an ideal point of $D$) determines 
a discrete valuation $w_y \colon \C(D)^\times
\rightarrow \mathbb{Z}; f \mapsto \mathrm{ord}_y(f)$ 
on the field of rational functions $\C(D)$ of $\widetilde{D}$ 
(that is, the {\em order function} at $y$; see \cite[Definition~(1.32)]{Mumford} for details). 
The {\em Bruhat--Tits building associated to
$(\widetilde{D},y)$} is then defined as
$\mathcal{B}_{n,\widetilde{D},y}=\mathcal{B}(\mathrm{SL}(n)_{/(\C(D), w_y)})$, 
which admits a canonical action of $\mathrm{SL}_n(\C(D))$. We thus obtain 
an action of $\pi$ on the Bruhat--Tits building $\mathcal{B}_{n,\widetilde{D},y}$
\begin{align*}
\pi \xrightarrow{\rho_{\widetilde{D}}} \mathrm{SL}_n(\C(D))
 \xrightarrow{\text{canonical}} \mathrm{Aut}(\mathcal{B}_{n,\widetilde{D},y})
\end{align*}
which is automatically type-preserving as we have already remarked in
Section~\ref{ssc:BT1}.

The following theorem is an analogue of Culler and Shalen's
``Fundamental Theorem'' \cite[Theorem~2.2.1]{CS} for 
representations of $\pi$ of higher dimension.

\begin{thm} \label{thm:FT}
Let $pr_n\vert_D^{\sim} \colon \widetilde{D}\rightarrow \widetilde{C}$
 be as above and let $y$ be a closed point of $\widetilde{D}$. 
Set $x=\mathrm{pr}_n\vert_D^{\sim}(y)$.
Then the trace function $I_\gamma$ associated to an element $\gamma$ of
 $\pi$ is holomorphic at $x$ 
if $\gamma$ fixes a certain vertex of the Bruhat--Tits building 
$\mathcal{B}_{n,\widetilde{D}, y}$ associated to $(\widetilde{D}, y)$.
\end{thm}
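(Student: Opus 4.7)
My plan is to reduce the assertion that $I_\gamma$ is holomorphic at $x\in\widetilde{C}$ to the integrality statement $\tr\rho_{\widetilde{D}}(\gamma)\in\mathcal{O}_{w_y}$, and then to extract this integrality from the lattice interpretation of vertex stabilisers in $SL_n(\C(D))$ developed in Section~\ref{ssc:BT1}.

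First I would observe that, by the very construction of $\rho_{\widetilde{D}}$ from the tautological representation $\rho_{\mathrm{taut}}$, the element $\tr\rho_{\widetilde{D}}(\gamma)\in\C(D)$ is the pullback of the regular function $I_\gamma|_C\in\C[C]\subset\C(\widetilde{C})$ along $\mathrm{pr}_n|_D\colon D\to C$. Under the surjective regular morphism $\mathrm{pr}_n|_D^{\sim}\colon\widetilde{D}\to\widetilde{C}$ between smooth projective curves, the discrete valuations satisfy $w_y=e\cdot w_x$ on $\C(\widetilde{C})$ for the ramification index $e\geq 1$; hence $I_\gamma|_C$ is holomorphic at $x$ precisely when its pullback $\tr\rho_{\widetilde{D}}(\gamma)$ is holomorphic at $y$. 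The problem is thereby reduced to proving $w_y(\tr\rho_{\widetilde{D}}(\gamma))\geq 0$.

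Next, suppose $\gamma$ fixes a vertex $v$ of $\mathcal{B}_{n,\widetilde{D},y}$ represented by an $\mathcal{O}_{w_y}$-lattice $L\subset\C(D)^{\oplus n}$ with an $\mathcal{O}_{w_y}$-basis $\{f_1,\dots,f_n\}$. By definition of the action one has $\rho_{\widetilde{D}}(\gamma)L=cL$ for some $c\in\C(D)^{\times}$, so the matrix of $\rho_{\widetilde{D}}(\gamma)$ with respect to $\{f_j\}$ has the form $cA$ with $A\in GL_n(\mathcal{O}_{w_y})$. Taking determinants and using $\det\rho_{\widetilde{D}}(\gamma)=1$ gives $c^{n}\det A=1$; since $\det A\in\mathcal{O}_{w_y}^{\times}$, this forces $c\in\mathcal{O}_{w_y}^{\times}$ and consequently $cA\in SL_n(\mathcal{O}_{w_y})$. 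Conjugating by the base-change matrix in $GL_n(\C(D))$ that sends the standard basis $\{e_j\}$ to $\{f_j\}$ exhibits $\rho_{\widetilde{D}}(\gamma)$ as $GL_n(\C(D))$-conjugate to an element of $SL_n(\mathcal{O}_{w_y})$, and since the trace is conjugation-invariant one obtains $\tr\rho_{\widetilde{D}}(\gamma)\in\mathcal{O}_{w_y}$, as required.

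The principal subtlety is the lattice step above: the vertex $v$ only determines a homothety class, so one must eliminate a possible scalar factor $c$ with $w_y(c)\neq 0$. The hypothesis $\det\rho_{\widetilde{D}}(\gamma)=1$ is exactly what does this, which is the structural reason for working with $SL(n)$ rather than $GL(n)$, in accordance with the remark following the construction of $\mathcal{B}(SL(n)_{/(F,w)})$ in Section~\ref{ssc:BT1}. Once this scalar is controlled, the remainder is a direct higher-dimensional analogue of the final step in Culler and Shalen's proof of \cite[Theorem~2.2.1]{CS}, and the descent from $\widetilde{D}$ to $\widetilde{C}$ via the ramification index is routine.
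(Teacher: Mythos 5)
Your proposal is correct and follows essentially the same route as the paper: both reduce holomorphy of $I_\gamma$ at $x$ to the integrality $\tr\rho_{\widetilde{D}}(\gamma)\in\mathcal{O}_{w_y}$ via the identification of $I_\gamma$ with the trace of the tautological representation and the compatibility of orders under $\mathrm{pr}_n\vert_D^{\sim}$, and then deduce integrality from the fact that vertex stabilisers in $SL_n(\C(D))$ are conjugates of $SL_n(\mathcal{O}_{w_y})$ together with conjugation-invariance of the trace. The only cosmetic difference is how the homothety scalar is eliminated: you do it directly by taking determinants and using $n\,w_y(c)=0\Rightarrow w_y(c)=0$, whereas the paper writes the stabiliser as $Z(SL_n(\C(D)))\,SL_n(\mathcal{O}_{w_y})$ and absorbs the centre using that roots of unity lie in the integrally closed ring $\mathcal{O}_{w_y}$ --- the same determinant-controls-the-scalar idea in different clothing.
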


\begin{proof}
We first claim that $I_\gamma$ is holomorphic at $x$ if and only if 
$\tr \rho_{\widetilde{D}}(\gamma)$ is contained in the valuation ring
 $\mathcal{O}_y$ of $\C(D)$ with respect to the valuation $w_y=\mathrm{ord}_y$. 
Indeed we may easily check that $I_\gamma$ coincides with $\tr
 \rho_{\widetilde{D}}(\gamma)$ as an element of $\C(C) (\subset \C(D))$, 
 and the holomorphy of
 $I_\gamma$ at $x$ is equivalent to the non-negativity of the order of
 $I_\gamma$ at $x$. The claim easily follows from these observations 
combined with the elementary fact that, for each element $f$ of $\C(C)$,
the order $\mathrm{ord}_x(f)$ of $f$ at $x$
is non-negative if and
 only if $w_y(f)=\mathrm{ord}_y(f)$ is non-negative.

Let $v_0$ denote the vertex of $\mathcal{B}_{n,\widetilde{D}, y}$ 
represented by the standard lattice $\sum_{j=1}^n \mathcal{O}_y e_j$. 
The isotropy subgroup of $\mathrm{SL}_n(\C(D))$ at $v_0$ is then calculated as 
$Z(\mathrm{SL}_n(\C(D)))\mathrm{SL}_n(\mathcal{O}_y)$. Here $Z(\mathrm{SL}_n(\C(D)))$ denotes the
 centre of $\mathrm{SL}_n(\C(D))$ and consists of scalar matrices $aI_n$ 
where $a$ is an $n$\nobreakdash-th root of unity contained in $\C(D)$. 
But the group of $n$\nobreakdash-th roots of unity $\mu_n(\C(D))$
 contained in $\C(D)$ is indeed contained in $\mathcal{O}_y$ 
because $\mathcal{O}_y$ is integrally closed in $\C(D)$. Hence $Z(\mathrm{SL}_n(\C(D)))$
 is a subgroup of $\mathrm{SL}_n(\mathcal{O}_y)$ and the isotropic subgroup at
 $v_0$ exactly coincides with $\mathrm{SL}_n(\mathcal{O}_y)$. 

Now assume that $\gamma$ fixes a vertex $v$ of $\mathcal{B}_{n,
 \widetilde{D}, y}$. Then there exists an element $g$ of
 $\mathrm{Aut}_{\C(D)}(V_n)$ satisfying $gv_0=v$ (recall that $V_n$
 denotes the $n$-dimensional $\C(D)$-vector space $\sum_{j=1}^n
 \C(D)e_j$). The isotropic subgroup
 of $\mathrm{SL}_n(\C(D))$ at $v$ then coincides with
 $g\mathrm{SL}_n(\mathcal{O}_y)g^{-1}$, and hence $\rho_{\widetilde{D}}(\gamma)$
 is contained in the  conjugate $g\mathrm{SL}_n(\mathcal{O}_y)g^{-1}$ 
 of $\mathrm{SL}_n(\mathcal{O}_y)$. The trace function is invariant under
 conjugation, and we may thus conclude that $\tr
 \rho_{\widetilde{D}}(\gamma)$ is contained in $\mathcal{O}_y$ as desired.  
\end{proof}

As a direct consequence of Theorem~\ref{thm:FT}, we may verify that 
the action of $\pi$ associated to an {\em ideal point} of $X_n(\pi)$ is 
nontrivial.
Recall that an action of a group $G$ on a simplicial complex $\Delta$ is 
said to be {\em nontrivial} if, for every vertex $v$ of $\Delta$, 
the isotropic subgroup $G_v$ of $G$ at $v$ is a proper subgroup of $G$.

\begin{cor} \label{cor:nontrivial}
Let $\tilde{x}$ be an ideal point of an affine  curve $C$ contained in $X_n(\pi)$ and $\tilde{y}$ a lift
 of $\tilde{x}$ $($namely, an ideal point of a lift $D$ of $C$
 satisfying $\mathrm{pr}_n\vert_D^{\sim}(\tilde{y})=\tilde{x})$. 
Then the associated action of $\pi$ on
 $\mathcal{B}_{n,\widetilde{D}, \tilde{y}}$ is nontrivial.
\end{cor}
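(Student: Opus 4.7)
The plan is to argue by contradiction. Suppose the action of $\pi$ on $\mathcal{B}_{n,\widetilde{D},\tilde{y}}$ were trivial, so that some vertex $v$ of the building is fixed by every element of $\pi$. Theorem~\ref{thm:FT} (applied with $y=\tilde{y}$ and hence $x=\tilde{x}$) then immediately yields that the trace function $I_\gamma$ is holomorphic at $\tilde{x}$ for \emph{every} $\gamma \in \pi$, since every such $\gamma$ fixes the vertex $v$.

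I would then translate this into a statement about the affine coordinates of the ambient character variety $X_n(\pi)$. Choose a finite generating set $\gamma_1, \dotsc, \gamma_m$ of $\pi$ and invoke Theorem~\ref{thm:Procesi}: the finite family $\{I_{\gamma_{i_1}\cdots\gamma_{i_k}}\}_{1\le i_1,\dotsc,i_k\le m}^{1\le k\le 2^n-1}$ provides affine coordinates on $X_n(\pi)$, and each member of this family is the trace function attached to a specific element of $\pi$. By the previous paragraph, each of these coordinate functions, regarded as a rational function on $\widetilde{C}$ via the natural map $\widetilde{C} \to C \subset X_n(\pi)$, is holomorphic at $\tilde{x}$.

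The final step is to observe that this contradicts $\tilde{x}$ being an ideal point in the sense of Definition~\ref{def:ideal}. The (a priori only partially defined) map $\widetilde{C} \to X_n(\pi)$ is described in the affine coordinates above by precisely the trace functions just considered; once each of its coordinates is regular at $\tilde{x}$, the map itself extends to a morphism on a Zariski neighbourhood of $\tilde{x}$, whose image actually lies in $C$ by continuity (and the fact that the map is onto $C$ away from $\tilde{x}$). Hence $\tilde{x}$ cannot be a point at which the birational map $\widetilde{C} \to C$ is undefined, contradicting the assumption that $\tilde{x}$ is an ideal point.

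I do not anticipate a serious technical obstacle: the algebraic heart of the argument — the dictionary between fixing a vertex of the Bruhat-Tits building and non-negativity of the valuation $w_{\tilde{y}}$ applied to the relevant trace functions — is already packaged inside Theorem~\ref{thm:FT}, and the remaining ingredient is only the elementary principle that a rational map to affine space is regular at a point exactly when each of its component functions is. The one place that merits a careful sentence is the transfer from $\widetilde{D}$ to $\widetilde{C}$ via $\mathrm{pr}_n|_D^{\sim}$ and the identification of $I_\gamma \in \C(C)$ with $\tr \rho_{\widetilde{D}}(\gamma) \in \C(D)$ in the valuation-theoretic step, but this is exactly the claim already established in the opening paragraph of the proof of Theorem~\ref{thm:FT}.
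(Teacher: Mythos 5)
Your proposal is correct and follows essentially the same route as the paper: contradiction via Theorem~\ref{thm:FT} applied to a putative globally fixed vertex, then Theorem~\ref{thm:Procesi} to conclude all trace coordinates of $C$ are holomorphic at $\tilde{x}$, contradicting the definition of an ideal point. Your final step merely unfolds the paper's one-line remark that some coordinate function must have a pole at an ideal point; the content is the same.
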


\begin{proof}
Let $D$ be a lift of $C$ in $R_n(\pi)$.
Striving for a contradiction, 
suppose that the action of $\pi$ induced on
 $\mathcal{B}_{n,\widetilde{D},y}$ is trivial, or in other words,
 suppose that  
there exists a vertex $v$ of $\mathcal{B}_{n,\widetilde{D},\tilde{y}}$ 
at which 
the isotropic subgroup of $\pi$ coincides 
with the whole group $\pi$. 
Theorem~\ref{thm:FT} then implies that 
the trace function $I_\gamma$ does not have a pole at
 $\tilde{x}$ for {\em every} element $\gamma$ of $\pi$.
In particular every affine coordinate function of $C$ is holomorphic at
 $\tilde{x}$ due to Theorem~\ref{thm:Procesi}. 
The last assertion contradicts the fact that  
 at least one coordinate function must have a pole at $\tilde{x}$ (recall
 that we have chosen $\tilde{x}$ from {\em ideal} points of $C$). 
\end{proof}

\begin{rem}
In the case where $n$ equals $2$, 
Culler and Shalen have also verified the converse 
of Theorem~\ref{thm:FT} in \cite[Theorem~2.2.1]{CS}; 
namely, they have proved that if $I_\gamma$ is 
holomorphic at $x$ (or equivalently, if $\tr
 \rho_{\widetilde{D}}(\gamma)$ is contained in $\mathcal{O}_y$), there
 exists a vertex of $\mathcal{B}_{n, \widetilde{D}, y}$ which is fixed
 by the action of $\gamma$. When $n$ is greater than or equal to $3$, 
 the converse of
 Theorem~\ref{thm:FT} does not hold at all in general 
(indeed one readily observes that the proof given in \cite[Theorem~2.2.1]{CS} 
clearly collapses for matrices of higher rank). 
Theorem~\ref{thm:FT} is, however, sufficient to construct 
tribranched surfaces, and the failure of the
 converse of Theorem~\ref{thm:FT} does not cause any harm 
 for the purpose of this article.
\end{rem}

\subsection{Ideal points of character varieties and tribranched surfaces}
\label{ssc:construction}

Now we show that an essential tribranched surface in a $3$-manifold
is constructed from a nontrivial type-preserving action
of its fundamental group on a Euclidean building.
Such an action is obtained from an ideal point of an affine curve in the character variety
as in Section \ref{ssc:NAB}, and consequently, an ideal point gives
an essential tribranched surface under certain conditions.

Let $M$ be a compact, connected, irreducible and orientable $3$-manifold.
In the following argument, a ``triangulation'' of $M$ should be understood to be {\em a piecewise-linear triangulation}, that is, the link of every $i$-simplex in the triangulation is piecewise-linearly homeomorphic to a $(2 - i)$-simplex or the boundary of a $(3- i)$-simplex for $i = 0, 1, 2$, according as the $i$-simplex lies in $\partial M$ or not  (see for instance \cite[Chapter 1]{Hempel}).
Now let $K$ be a (possibly locally infinite) $2$-dimensional combinatorial CW-complex each of whose closed cells is identified with a simplex as a CW-complex.
We call a map $f \colon M \to K$ \textit{piecewise-linear} if, for some triangulation of $M$, the images of the vertices of every simplex in $M$ span a simplex in $K$, and the restriction of $f$ to each simplex of $M$ is a linear map.
We define $Y(K)$ to be the $1$-dimensional subcomplex of the first barycentric subdivision of $K$ consisting of all the barycentres of $1$- and $2$-simplices and all the edges connecting them.

\begin{lem} \label{lem:Y}
Let $f \colon M \to K$ be a piecewise-linear map.
Then the inverse image of $Y(K)$ under $f$ is a tribranched surface in $M$.
\end{lem}

\begin{proof}
Consider a triangulation of $M$ with respect to which $f$ is a piecewise-linear map, and set $\Sigma$ to be the inverse image of $Y(K)$ under $f$.
Note that $\Sigma$ is a compact subset of $M$ since it is a closed subset of the compact manifold $M$. Let us use the notation introduced in Section~\ref{ssc:TBS}.

We first show that $(M, \Sigma)$ is locally homeomorphic to $(\overline{\mathbb{H}}, Y \times [0, \infty))$;
recall that we define the topological space $Y$ as 
\[ Y = \{ \, r e^{\sqrt{-1} \theta} \in \C \mid r\in \R_{\geq 0} \text{ and }\theta = 0, \pm 2 \pi/3 \, \}. \]
The piecewise-linear map $f$ maps each $3$-simplex $\tau$ in $M$ onto either a vertex, an edge or a $2$-simplex in $K$.
Corresponding to the image of $\tau$ in $K$, the restriction of $\Sigma$ to $\tau$ is either the empty set, a normal disk (more precisely a triangle or a quadrilateral), or a $2$-dimensional combinatorial CW-complex consisting of one triangle and two quadrilaterals sharing one common edge; see Figure~\ref{fig:Y}.
The inverse image $\Sigma$ is the union of these subspaces glued up along $2$-simplices in $M$.

\begin{figure}[ht]
\centering
\begin{tabular}{c@{\qquad}c@{\qquad}c}
  \begin{tikzpicture}
   \coordinate (O) at (0,0);
   \coordinate (A) at (-2.0,-2);
   \coordinate (B) at (0.4,-3.2);
   \coordinate (C) at (1.4,-1.2);
   \draw [darkgray, very thick, opacity=0.6] (O) -- (B) (O) -- (A) -- (B) -- (C) --cycle;
   \draw [darkgray, very thick, opacity=0.6, dashed] (A) -- (C);
   \coordinate (OA) at ($0.5*(A)$);
   \coordinate (OB) at ($0.5*(B)$);
   \coordinate (OC) at ($0.5*(C)$);
   \fill [pattern=north east lines] (OA) -- (OB) -- (OC) -- cycle;
   \draw (OA) -- (OB) -- (OC) -- cycle;
  \end{tikzpicture} &
  \begin{tikzpicture}
   \coordinate (O) at (0,0);
   \coordinate (A) at (-2.0,-2);
   \coordinate (B) at (0.4,-3.2);
   \coordinate (C) at (1.4,-1.2);
   \draw [darkgray, very thick, opacity=0.6] (O) -- (B) (O) -- (A) -- (B) -- (C) --cycle;
   \draw [darkgray, very thick, opacity=0.6, dashed] (A) -- (C);
   \coordinate (OA) at ($0.5*(A)$);
   \coordinate (OC) at ($0.5*(C)$);
   \coordinate (AB) at ($0.5*(A)+0.5*(B)$);
   \coordinate (BC) at ($0.5*(B)+0.5*(C)$);
   \fill [pattern=north east lines] (OA) -- (AB) -- (BC) -- (OC) -- cycle;
   \draw (OA) -- (AB) -- (BC) -- (OC) -- cycle;
  \end{tikzpicture} & 
 \begin{tikzpicture}
   \coordinate (O) at (0,0);
   \coordinate (A) at (-2.0,-2);
   \coordinate (B) at (0.4,-3.2);
   \coordinate (C) at (1.4,-1.2);
  \draw [darkgray, very thick, opacity=0.6] (O) -- (B) (O) -- (A) -- (B) -- (C) --cycle;
   \draw [darkgray, very thick, opacity=0.6, dashed] (A) -- (C);
  \coordinate (OA) at ($0.5*(A)$);
  \coordinate (OB) at ($0.5*(B)$);
  \coordinate (AB) at ($0.5*(A)+0.5*(B)$);
  \coordinate (BC) at ($0.5*(B)+0.5*(C)$);
  \coordinate (AC) at ($0.5*(A)+0.5*(C)$);
  \coordinate (OAB) at ($1/3*(A)+1/3*(B)$);
  \coordinate (ABC) at ($1/3*(A)+1/3*(B)+1/3*(C)$);
  \fill [pattern=north east lines] (OAB) -- (OB) -- (BC) -- (ABC) -- cycle;
  \draw (OAB) -- (OB) -- (BC) -- (ABC) -- cycle;
  \fill [pattern=north east lines] (OAB) -- (AB) -- (ABC) -- cycle;
  \draw (OAB) -- (AB) -- (ABC) -- cycle;
  \draw [name path=front] (OAB) -- (OB);
  \draw [opacity=0, name path=back] (AC) -- (ABC);
  \fill [pattern=north east lines, name intersections={of=front and back, by=P}] (OAB) -- (OA) -- (AC) -- (P) -- cycle;
  \draw (OAB) -- (OA) -- (AC) -- (P);
  \draw [dashed] (P) -- (ABC);
  \end{tikzpicture} \\
 \end{tabular}
\caption{The inverse image $\Sigma$ in a single $3$-simplex $\tau$ of $M$}
\label{fig:Y}
\end{figure}

Now take an arbitrary point $x$ of $\Sigma$, and let us study the topological structure around $x$. Firstly we know from the construction of $\Sigma$ that $x$ cannot be any vertex in $M$.
If $x$ is in the interior of a $3$-simplex $\tau$ in $M$, then the above classification of types of $\Sigma$ restricted to $\tau$ shows at once that $(M, \Sigma)$ is locally homeomorphic to $(\mathbb{H}, Y \times (0, \infty))$ around $x$.
Next suppose that $x$ is in the interior of a $2$-simplex in $M$.
Then the above classification again shows that, for each $3$-simplex $\tau$ containing the $2$-simplex under consideration, a sufficiently small open neighborhood of $x$ in $\Sigma \cap \tau$ is homeomorphic to $\R \times [0, \infty)$ or $Y \times [0, \infty)$.
Since every $2$-simplex is adjacent to at most two $3$-simplices in $M$, a sufficiently small open neighborhood of $x$ in $\Sigma$ is homeomorphic to $\R \times [0, \infty)$ or $Y \times [0, \infty)$ if $x \in \partial M$, and to $\R^2$ or $Y \times \R$ otherwise.
It thus follows that $(M, \Sigma)$ is locally homeomorphic to $(\overline{\mathbb{H}}, Y \times [0, \infty))$ around $x$ in this case.

As the final case suppose that $x$ is the midpoint of an edge in $M$.
Note that, for each $3$-simplex $\tau$ containing the edge under consideration, a sufficiently small open neighborhood of $x$ in $\Sigma \cap \tau$ is a sector in any cases of the above classification of $\Sigma \cap \tau$. Since we consider a piecewise-linear triangulation of $M$, a finite number of $3$-simplices are glued along $2$-simplices around every edge in $M$ so that its link in $M$ is homeomorphic to a closed interval or a circle according as the edge lies in $\partial M$ or not.
If we take a sufficiently small open neighborhood of $x$ in $\Sigma$, we now see that its boundary is homeomorphic to the link of the edge under consideration since the neighbourhood is just the union of sectors glued up along $2$-simplices; see Figure~\ref{fig:NBD}. This implies that, aroud $x$, $\Sigma$ is homeomorphic to $\R \times [0, \infty)$ or $\R^2$ according as $x$ is contained in $\partial M$ or not, and thus $(M, \Sigma)$ is locally homeomorphic to $(\overline{\mathbb{H}}, \R \times [0, \infty))$ in this case.
In summary, we see that $(M, \Sigma)$ is locally homeomorphic to $(\overline{\mathbb{H}}, Y \times [0, \infty))$ around each $x$ in $\Sigma$.

\begin{figure}[ht]
\centering
\begin{tikzpicture}
 \coordinate (O) at (0,0);
 \coordinate (OO) at (0,-3);
 \coordinate (X) at (0,-1.5);
 \coordinate (A) at (-1.5,-2);
 \coordinate (AA) at (1.5,-1.8);
 \coordinate (B) at (-1.4,-0.9);
 \coordinate (BB) at (1.6,-1);
 \coordinate (C) at (-0.7,-0.2);
 \coordinate (CC) at (0.9,-0.1);
 \draw [name path=BOO, opacity=0] (B) -- (OO);
 \draw [name path=COO, opacity=0] (C) -- (OO);
 \draw [name path=BBOO, opacity=0] (BB) -- (OO);
 \draw [name path=CCOO, opacity=0] (CC) -- (OO);
 \draw [very thick] (O) -- (OO);
 \draw [gray, thick, opacity=0.6, name path=OA] (O) -- (A) -- (OO);
 \draw [gray, thick, opacity=0.6, name path=OB,name intersections={of=OA and BOO, by=P}] (O) -- (B) -- (P);
 \draw [gray, thick, opacity=0.6, name path=OC,name intersections={of=OB and COO, by=Q}] (O) -- (C) -- (Q);
 \draw [gray, thick, opacity=0.6, name path=OAA] (O) -- (AA) -- (OO);
 \draw [gray, thick, opacity=0.6, name path=OBB,name intersections={of=OAA and BBOO, by=PP}] (O) -- (BB) -- (PP);
 \draw [gray, thick, opacity=0.6, name path=OCC,name intersections={of=OBB and CCOO, by=QQ}] (O) -- (CC) -- (QQ);
 \draw [gray, thick, opacity=0.6, dashed] (P) -- (OO) (Q) -- (OO) (PP) -- (OO) (QQ) -- (OO);
 \fill (X) circle [radius=2pt];
\coordinate (OOA) at ($(O)!.5!(A)$);
 \coordinate (R) at ($(X)!0.5!(OOA)$);
\coordinate (RR) at ($(X)!1.2!(R)$);
\coordinate (OB) at ($(O)!.5!(B)$);
 \coordinate (S) at ($(X)!0.5!(OB)$);
\coordinate (SS) at ($(X)!1.2!(S)$);
\coordinate (OC) at ($(O)!.5!(C)$);
\coordinate (T) at ($(X)!0.5!(OC)$);
\coordinate (TT) at ($(X)!1.2!(T)$);
\coordinate (OOAA) at ($(OO)!.5!(AA)$);
 \coordinate (U) at ($(X)!0.5!(OOAA)$);
\coordinate (UU) at ($(X)!1.2!(U)$);
\coordinate (OBB) at ($(O)!.5!(BB)$);
 \coordinate (V) at ($(X)!0.4!(BB)$);
\coordinate (VV) at ($(X)!1.2!(V)$);
\coordinate (OCC) at ($(O)!.4!(CC)$);
\coordinate (W) at ($(X)!0.5!(CC)$);
\coordinate (WW) at ($(X)!1.2!(W)$);
\draw [thin] (X) -- (RR) (X) -- (SS) (X) -- (TT) (X) -- (UU) (X) -- (VV) (X) -- (WW);
\fill [pattern=north east lines] (RR) -- (X) -- (SS) -- cycle (TT) -- (X) -- (WW) (VV) -- (X) -- (UU);
\fill [pattern=north west lines] (SS) -- (X) -- (TT) -- cycle (WW) -- (X) -- (VV);
\fill [white] ($(X)+(0.1,0.15)$) rectangle ($(X)+(0.4,-0.15)$);
\node at (X) [right] {$x$};
\draw [ultra thick] (A) -- (B) -- (C) (AA) -- (BB) -- (CC);
\draw [ultra thick, dashed] (C) -- (CC);
\draw [very thick] (R) -- (S) -- (T) -- (W) -- (V) -- (U);
\coordinate (M) at ($(CC)!.5!(BB)+(0.1,0)$);
\coordinate (MM) at (3,-0.8);
\draw [->] (MM) [bend right=20pt] to (M);
\node at (MM) [xshift=15pt,yshift=-8pt] {the link of the edge};
\coordinate (N) at ($(R)!.5!(S)+(-0.1,0)$);
\coordinate (NN) at (-3,-2);
\draw [->] (NN) [bend left=20pt] to (N);
\node at (NN) [yshift=-8pt] {the boundary of};
\node at (NN) [yshift=-23pt] {a neighbourhood of $x$ in $\Sigma$};
\coordinate (D) at (0.1,-2.25);
\coordinate (DD) at (2,-2.5);
\draw [->] (DD) [bend right=20pt] to (D);
\node at (DD) [below,xshift=18pt] {the edge containing $x$};
\end{tikzpicture} 
\caption{Around the midpoint $x$ of an edge}
\label{fig:NBD}
\end{figure}

Next we show that $\Sigma$ satisfies (TBS1).
Let $C$ be an arbitrary component of the set $C(\Sigma)$ of branched points, and consider a sufficiently small tubular neighbourhood $\nu(C)$ of $C$ in $M$.
The intersection $\nu(C) \cap \Sigma$ naturally admits the structure of a fibre bundle over $C$ whose fibre is homeomorphic to $Y$.
We may identify $f(\nu(C) \cap \Sigma)$ with $Y$ so that $f(C)$ corresponds to $\{0\}$.
Then since the inverse image of $\{0\}$ under $f$ is $C$, the topological space $f((\nu(C) \cap \Sigma) \setminus C)$ has 3~components, and so does $(\nu(C) \cap \Sigma) \setminus C$ by continuity of $f$. Therefore the fibre bundle $\nu(C) \cap \Sigma \to C$ above must be trivial, which implies that $\Sigma$ satisfies (TBS1).

Finally  we show that $\Sigma$ satisfies (TBS2).
We denote by $M_0$ the complement of a small open tubular neighbourhood of $C(\Sigma)$ in $M$.
Let $S$ be an arbitrary component of the subsurface $S(\Sigma)$, which can be regarded as a properly embedded subsurface in the orientable $3$-manifold $M_0$.
The image $f(S)$ is contained in a component $\Gamma$ of the complement in $Y(K)$ of the subset consisting of all the barycentres of $2$-simplices.
Since $\Gamma$ is bicollared in $K$, $S$ is two-sided, and so orientable.
Hence $\Sigma$ satisfies (TBS2), and the lemma follows.
\end{proof}

We now consider a type-preserving action of $\pi_1(M)$ on a Euclidean building $\mathcal{B}$.
The simplicial complex structure of $\mathcal{B}^{(2)}$ naturally induces the combinatorial CW-complex structure of $\mathcal{B}^{(2)}/\pi_1(M)$, where, for each non-negative integer $i$, we denote by $\mathcal{B}^{(i)}$ the $i$-skeleton of $\mathcal{B}$.
In particular, each closed cell of the combinatorial CW-complex $\mathcal{B}^{(2)}/\pi_1(M)$ is identified with a simplex as a CW-complex.
We say that a type-preserving action of $\pi_1(M)$ on a Euclidean  building $\mathcal{B}$ {\em gives a tribranched surface $\Sigma$} if there exists a map $f \colon M \to \mathcal{B} / \pi_1(M)$ such that the tribranched surface $\Sigma$ coincides with the inverse image of $Y(\mathcal{B}^{(2)} / \pi_1(M))$ under $f$.

\begin{thm} \label{thm:BT}
Let $n$ be a natural number greater than or equal to $3$, 
and assume that the boundary $\partial M$ of $M$ is non-empty when 
$n$ is strictly greater than $3$.
Then a nontrivial type-preserving action of $\pi_1(M)$
on a Euclidean building $\mathcal{B}$ of dimension~$n-1$ gives
an essential tribranched surface in $M$.
\end{thm}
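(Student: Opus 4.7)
The plan is to imitate the Stallings--Waldhausen construction of an essential surface from a group action on a tree, with the tree replaced by the $1$\nobreakdash-complex $\widetilde{Y}:=Y(\mathcal{B}^{(2)})$ sitting inside the $2$\nobreakdash-skeleton of the building. Concretely, I aim to construct a $\pi_1(M)$\nobreakdash-equivariant continuous map $\widetilde{f}\colon \widetilde{M}\to \mathcal{B}$ whose transverse preimage of $\widetilde{Y}$ descends to the desired tribranched surface $\Sigma$ in $M$.

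I would fix a smooth triangulation of $M$ and lift it equivariantly to $\widetilde{M}$. Since the geometric realisation $\lvert\mathcal{B}\rvert$ is CAT$(0)$ and hence contractible (as recalled in Section~\ref{ssc:EB}), the map $\widetilde{f}$ may be built skeleton by skeleton: send a set of $\pi_1(M)$\nobreakdash-orbit representatives of $0$\nobreakdash-cells of $\widetilde{M}$ to arbitrary vertices of $\mathcal{B}$, propagate equivariantly, and extend to higher cells by CAT$(0)$ geodesic fillings. When $n=3$ the building is $2$\nobreakdash-dimensional, so after a general\nobreakdash-position perturbation of $\widetilde{f}$ with respect to the barycentric cell structure, $\widetilde{f}^{-1}(\widetilde{Y})$ is a properly embedded $2$\nobreakdash-complex in $\widetilde{M}$: preimages of the interiors of edges of $\widetilde{Y}$ give $2$\nobreakdash-sheets, while preimages of barycentres of $2$\nobreakdash-simplices (at which three edges of $\widetilde{Y}$ meet) give precisely the tri\nobreakdash-valent branch arcs demanded by (TBS1). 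When $n>3$ the subcomplex $\mathcal{B}^{(2)}$ has positive codimension in $\mathcal{B}$, so a generic $\widetilde{f}$ would miss $\widetilde{Y}$ in the wrong dimension; here I invoke the hypothesis $\partial M\neq \emptyset$, which guarantees that $M$ collapses onto a $2$\nobreakdash-dimensional spine. Lifting such a collapse equivariantly produces an equivariant homotopy of $\widetilde{f}$ into $\mathcal{B}^{(2)}$, after which the same general\nobreakdash-position argument applies. The quotient $f\colon M\to \mathcal{B}/\pi_1(M)$ then yields $\Sigma = f^{-1}(Y(\mathcal{B}^{(2)}/\pi_1(M)))$, and orientability (TBS2) of $S(\Sigma)$ can be arranged by a final local adjustment using that $M$ itself is orientable.

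It remains to arrange essentiality. Condition (ETBS1) is immediate: each component $N$ of $M(\Sigma)$ lifts through $\widetilde{f}$ into the open star of some vertex $v$ of $\mathcal{B}$, so the image of $\pi_1(N)\to\pi_1(M)$ is contained in the isotropy subgroup $\pi_1(M)_v$, which is proper by hypothesis on the nontriviality of the action. To secure (ETBS2) and (ETBS3) I would perform a sequence of local surgeries on $\Sigma$: compress any sheet of $S(\Sigma)$ whose inclusion fails injectivity on $\pi_1$ via a compressing $2$\nobreakdash-disk supplied by the loop theorem together with the irreducibility of $M$; carry out the analogous modifications along the branch arcs to enforce injectivity of $\pi_1(C)\to\pi_1(S)$; and finally discard any resulting component lying inside a $3$\nobreakdash-ball or a collar of $\partial M$. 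The principal obstacle, mirroring the subtle step in Culler and Shalen's two\nobreakdash-dimensional work, is to realise each such surgery by an equivariant modification of $\widetilde{f}$ itself, so that (ETBS1) is preserved at every stage---in particular so that the simplifications do not collapse $\widetilde{\Sigma}$ into a globally fixed vertex of $\mathcal{B}$, which would render the resulting tribranched surface inessential.
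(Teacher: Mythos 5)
Your outline reproduces the paper's strategy (equivariant map to the building, pullback of the dual $1$-complex $Y(\mathcal{B}^{(2)}/\pi_1(M))$, the $2$-dimensional spine when $n>3$, nontriviality of the action giving (ETBS1)), but the second half of your argument is asserted rather than proved, and that is where the real content lies. The step you yourself flag as ``the principal obstacle'' --- realising each compression, each surgery along a branch curve, and each elimination of a component in a ball or collar by a modification of the map $f\colon M\to\mathcal{B}^{(2)}/\pi_1(M)$, so that the new surface is again the preimage of $Y(\mathcal{B}^{(2)}/\pi_1(M))$ and hence again ``given by the action'' --- is exactly what the paper carries out explicitly: one changes $f$ only on a ball $B$ around the compressing disk (or around a disk sheet $S$ when $\pi_1(C)\to\pi_1(S)$ is not injective), and one must check that $f|_{\partial B}$ extends over $B$ with preimage of $Y(\mathcal{B}^{(2)}/\pi_1(M))$ equal to exactly two new disks; this uses the local structure of $Y(\mathcal{B}^{(2)}/\pi_1(M))$ near the barycentre of a $2$-simplex (three adjacent vertex stars) and near the barycentre of an edge. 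Moreover you give no termination argument: nothing in your proposal rules out an infinite sequence of surgeries. The paper introduces the lexicographic complexity $(l(\Sigma),m(\Sigma),n(\Sigma))$ with $m(\Sigma)=\sum_S(2-\chi(S))^2$, checks that the branch-curve surgery strictly decreases $l$, that a compression strictly decreases $m$ (a small Euler-characteristic computation, split into the cases where the compressed sheet stays connected or disconnects), and that discarding inessential components decreases $n$ without increasing $l$ or $m$; a surface of minimal complexity given by the action is then essential. Note also that your worry about ``collapsing onto a fixed vertex'' is a red herring: (ETBS1) holds automatically for \emph{any} tribranched surface given by the action, by the open-star argument plus nontriviality, so the only issues are the map-realisation of the surgeries and the induction --- and these are precisely what is missing.

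In the first half there are smaller problems. ``General position'' for a map into a Euclidean building is not standard transversality, since the target is not a manifold; the paper instead takes $\tilde f$ simplicial (after subdividing the triangulation of $M$) with linear restriction to each simplex, and linearity immediately yields the local model $(\overline{\mathbb{H}},\,Y\times[0,\infty))$. Condition (TBS1) is not only the local trivalent picture but the global triviality of the $Y$-bundle over each component $C$ of $C(\Sigma)$; the paper deduces it from the fact that the three local sheets map to three distinct edges of $Y(\mathcal{B}^{(2)}/\pi_1(M))$ incident to $f(C)$, so the bundle cannot be twisted. Finally, (TBS2) cannot be ``arranged by a final local adjustment'': a non-orientable sheet is not made orientable by local moves. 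Rather, orientability comes for free, since each sheet is (locally) a level set of a map that is linear on simplices, hence two-sided, and a two-sided surface in the orientable manifold $M$ is orientable.
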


\begin{proof}
The proof is divided into two parts.
In the first part we show that the action of $\pi_1(M)$ on $\mathcal{B}$ gives
a non-empty tribranched surface which is not necessarily essential,
and in the second part we modify such a tribranched surface given by the action
to be essential by local surgery.

Let us take a triangulation of $M$ and consider the triangulation on
 $\widetilde{M}$ induced from it.
We construct a $\pi_1(M)$\nobreakdash-equivariant simplicial map
$\tilde{f} \colon \widetilde{M} \to \mathcal{B}^{(2)}$ as follows.
First consider the case of $n=3$ (in the case the
 $2$-skeleton of $\mathcal{B}$ coincides with $\mathcal{B}$ itself since 
it is of dimension $2$).
For each vertex $v$ of $M$, we choose a lift $\tilde{v}$ of $v$ in $\widetilde{M}$
and a vertex $\tilde{f}(\tilde{v})$ of $\mathcal{B}$.
Then we define $\tilde{f}|_{\widetilde{M}^{(0)}}$ as
\[ \tilde{f}\vert_{\widetilde{M}^{(0)}}(\gamma \cdot \tilde{v}) = \gamma \tilde{f}(\tilde{v}) \]
for arbitrary $\gamma \in \pi_1(M)$ so that
$\tilde{f}|_{\widetilde{M}^{(0)}}$ is $\pi_1(M)$-equivariant.
Now assume that we have already constructed a $\pi_1(M)$-equivariant
 simplicial map $\tilde{f}\vert_{\widetilde{M}^{(i-1)}} \colon
 \widetilde{M}^{(i-1)}\rightarrow \mathcal{B}$ on the
 $(i-1)$\nobreakdash-skeleton of $\widetilde{M}$, and let us take an
 arbitrary $i$-simplex $\sigma$ of $\widetilde{M}$. 
We may extend the restriction
 $\tilde{f}\vert_{\partial \sigma}$ of
 $\tilde{f}\vert_{\widetilde{M}^{(i-1)}}$ onto $\partial \sigma$ to a
 map $\tilde{f}\vert_\sigma$ on $\sigma$ due to the contractibility of
 the Euclidean building $\mathcal{B}$. Moreover 
we can take $\tilde{f}|_{\sigma}$ to be a simplicial map by subdividing
 $M$ (and $\widetilde{M}$) if necessary.
By continuing this procedure, we can extend
 $\tilde{f}|_{\widetilde{M}^{(0)}}$ to simplicial maps 
on $\widetilde{M}^{(1)}$, $\widetilde{M}^{(2)}$,
and $\widetilde{M}$ inductively, and obtain a desired simplicial map 
$\tilde{f}\colon \widetilde{M}\to \mathcal{B}$.
Next consider the case of $n \geq 4$.
Since $\partial M$ is non-empty by the assumption, we can take a $2$-dimensional subcomplex $V$ which is a deformation retract of $M$.
Denote by $\widetilde{V}$ the preimage of $V$ under the universal covering map $\widetilde{M} \to M$.
We define $\tilde{f}\vert_{\widetilde{V}^{(0)}}$ on
 $\widetilde{V}^{(0)}$ and, by subdividing $M$ if necessary, 
we extend it to a $\pi_1(M)$-equivariant simplicial map 
$\tilde{f}\vert_{\widetilde{V}}\colon \widetilde{V}\rightarrow \mathcal{B}^{(2)}$ similarly to the
 case of $n=3$. Note that the image of the extended map
 $\tilde{f}\vert_{\widetilde{V}}$ is contained in the
 $2$\nobreakdash-skeleton $\mathcal{B}^{(2)}$ of $\mathcal{B}$ since
 $\widetilde{V}$ is of dimension~$2$.
By composing $\tilde{f}\vert_{\widetilde{V}}$  
with a deformation retraction $\widetilde{M} \to \widetilde{V}$, 
we obtain a desired map $\tilde{f} \colon \widetilde{M} \to \mathcal{B}^{(2)}$.

We can slightly modify the above construction so that the restriction
of $\tilde{f} \colon \widetilde{M} \to \mathcal{B}^{(2)}$ to each simplex
is a linear map.
Denote by $f \colon M \to \mathcal{B}^{(2)} / \pi_1(M)$ the quotient of the simplicial map
$\tilde{f} \colon \widetilde{M} \to \mathcal{B}^{(2)}$ by $\pi_1(M)$,
which is a piecewise-linear map,
 and set $\Sigma$ to be the inverse image of $Y(\mathcal{B}^{(2)} / \pi_1(M))$ under $f$.
By Lemma~\ref{lem:Y} we see that $\Sigma$ is a tribranched surface in $M$.
Note that the above construction of $\Sigma$ is far from being canonical
since it depends on many choices, for instance, of a triangulation of $M$
and a $\pi_1(M)$-equivariant simplicial map $\tilde{f}$.

Next we show that $\Sigma$ satisfies (ETBS1),
which, in particular, implies that $\Sigma$ is non-empty.
Striving for a contradiction, suppose that 
there exists a component $N$ of $M(\Sigma)$
such that the homomorphism $\pi_1(N) \to \pi_1(M)$ induced
by the natural inclusion $N\hookrightarrow M$ is surjective.
Let $N_0$ be a component of the preimage of $N$
under the universal covering map $\widetilde{M} \to M$.
Since $\tilde{f}(N_0)$ does not intersect $Y(\mathcal{B}^{(2)})$ 
by construction,
it is contained in the open star of a certain vertex $v$ of $\mathcal{B}^{(2)}$
in its barycentric subdivision. 
Obviously $N_0$ is a covering space over $N$, and thus the fundamental
 group $\pi_1(N)$ stabilises $N_0$. 
The image of the homomorphism $\pi_1(N) \to \pi_1(M)$ then also
stabilises the open star of $v$ containing $\tilde{f}(N_0)$ due to 
the $\pi_1(M)$-equivariance of $\tilde{f}$, and it is, in particular, 
contained in the isotropic subgroup $\pi_1(M)_v$ of $\pi_1(M)$ at $v$. 
Hence we conclude that $\pi_1(M)_v$ coincides with the whole group
 $\pi_1(M)$, combining the arguments above with 
the assumption on the surjectivity of the homomorphism $\pi_1(N)\to
 \pi_1(M)$, which contradicts nontriviality of the action of $\pi_1(M)$
 on $\mathcal{B}$.

As we have already mentioned at the beginning of the proof,
the tribranched surface $\Sigma$ itself might not be essential.
From now on we modify $\Sigma$ to be essential
as the second part of the proof.
For a tribranched surface $\Sigma$ given by the action of $\pi_1(M)$
on $\mathcal{B}$, we set
\begin{align*}
l(\Sigma) &= \text{the number of components of $C(\Sigma)$}, \\
m(\Sigma) &= \sum_S (2-\chi(S))^2 \quad (\text{where $\chi(S)$ is the Euler characteristic of the surface $S$}), \\
n(\Sigma) &= \text{the number of components of $\Sigma$},
\end{align*}
where the sum in the second equation runs over all components $S$ of $S(\Sigma)$.
We see at once that these integers are all non-negative.
We consider the triple $(l(\Sigma), m(\Sigma), n(\Sigma)) \in \Z^3$
with respect to the lexicographical order of $\Z^3$
as a {\em complexity} of a non-empty tribranched surface $\Sigma$.
In the following we show that if $\Sigma$ is not essential,
there are operations of replacing $\Sigma$ by another tribranched surface
with lower complexity, which is also given by the actions of $\pi_1(M)$ on $\mathcal{B}$. Consequently a tribranched surface of minimal complexity given by the
 action of $\pi_1(M)$ on $\mathcal{B}$
must be essential.

Let us consider the case where $\Sigma$ does not satisfy (ETBS2).
First assume that there exists a pair of components $C$ and $S$
of $C(\Sigma)$ and $S(\Sigma)$ respectively such that the natural inclusion map
between them induces a homomorphism $\pi_1(C) \to \pi_1(S)$
which is not injective.
This implies that $S$ is a disk.
Let $S_1$ and $S_2$ be the other components of $S(\Sigma)$ whose boundary
contain parallel copies of $C$ as components (the surfaces $S_1$ and
 $S_2$ might coincide). 
Take a small neighbourhood $B$ of $S$ which is homeomorphic to a ball
and intersects $S_1$ and $S_2$ in the collars of $C$.
Figure~\ref{fig:B1} illustrates a local picture of the neighbourhood $B$.
\begin{figure}[ht]
\centering
\begin{tikzpicture}
\draw (-4,3) .. controls (0,1.25) and (0,1.25) .. (4,3);
\draw (-4,-3) .. controls (0,-1.25) and (0,-1.25) .. (4,-3);

\fill [pattern=north west lines] (0,-1.68) arc (-90:90:0.4 and 1.68) arc (90:270:0.4 and 1.68);
\draw [very thick](0,-1.68) arc (-90:90:0.4 and 1.68);
\draw [very thick,dashed] (0,1.68) arc (90:270:0.4 and 1.68);
\draw (-1.5,-1.97) arc (-90:90:0.4 and 1.97);
\draw[dashed] (-1.5,1.97) arc (90:270:0.4 and 1.97);
\draw (1.5,-1.97) arc (-90:90:0.4 and 1.97);
\draw[dashed] (1.5,1.97) arc (90:270:0.4 and 1.97);

\draw (0,0) circle (2.5);

\draw (-1.5,-2.5) -- (-1.5,-3) -- (-0.5,-3);
\draw (0.5,-3) -- (1.5,-3) -- (1.5,-2.5);

\draw (-3,3) node {$\Sigma$};
\draw (0,2) node {$C$};
\draw (0,0) node [rectangle, fill=white] {$S$};
\draw (0,3) node {$B$};
\draw (-1.5,0) node {$D_1$};
\draw (1.5,0) node {$D_2$};
\draw (-2.2,0) node {$B_1$};
\draw (2.2,0) node {$B_2$};
\draw (-4,0) node {$S_1$};
\draw (4,0) node {$S_2$};
\draw (0,-3) node {$B_3$};
\end{tikzpicture}
\caption{A neighbourhood $B$ of the surface component $S$}
\label{fig:B1}
\end{figure}
Choose properly embedded disks $D_1$ and $D_2$ in $B$
bounding $S_1 \cap \partial B$ and $S_2 \cap \partial B$ respectively and not intersecting $S$.
We construct a map $g \colon B \to \mathcal{B}^{(2)} / \pi_1(M)$
such that $g|_{\partial B} = f|_{\partial B}$
and that $g^{-1}(Y(\mathcal{B}^{(2)} / \pi_1(M))) = D_1 \cup D_2$ as follows.
Since $f(\partial D_1)$ and $f(\partial D_2)$ are contained in open edges
of $Y(\mathcal{B}^{(2)} / \pi_1(M))$ near the vertex $f(C)$, the maps 
$g|_{\partial D_1}=f|_{\partial D_1}$ and $g|_{\partial D_2}=f|_{\partial D_2}$ extend to $D_1$ and $D_2$ respectively
so that $g(D_1)$ and $g(D_2)$ are contained in the same open edges.
The ball $B$ is divided into $3$ balls $B_1$, $B_2$ and $B_3$ by $D_1$ and $D_2$,
where $\partial B_1$ does not contain $D_2$ but contains $D_1$,
$\partial B_2$ does not contain $D_1$ but contains $D_2$,
and $\partial B_3$ contains both disks.
There exists a unique $2$-simplex of $\mathcal{B}^{(2)}$ which contains
 $f(C)$ as its barycentre, and the open star of each of its 3~vertices 
contains one of $g(\partial B_1 \setminus D_1)$, $g(\partial B_2
 \setminus D_2)$ and $g(\partial B_3 \setminus (D_1 \cup D_2))$. 
We can thus extend $g|_{\partial B_1}$, $g|_{\partial B_2}$ and $g|_{\partial B_3}$
to $B_1$, $B_2$ and $B_3$ respectively
so that all of $g(B_1 \setminus D_1)$, $g(B_2 \setminus D_2)$ and $g(B_3 \setminus (D_1 \cup D_2))$ do not intersect $Y(\mathcal{B}^{(2)} / \pi_1(M))$.
Then we see at once that the inverse images of $Y(\mathcal{B}^{(2)} /
 \pi_1(M))$ under the maps $g\vert_{B_1}$, $g\vert_{B_2}$ and
 $g\vert_{B_3}$ are $D_1$, $D_2$
and $D_1 \cup D_2$ respectively.
Figure \ref{fig:G1} illustrates the image $g(B)$ of the neighbourhood
 $B$ of $S$. 
\begin{figure}[ht]
\centering
\begin{tikzpicture}
\draw (0,4) -- (-30:4) -- (-150:4) -- (0,4);
\draw [dashed] (0,0) -- (30:2);
\draw [dashed] (0,0) -- (150:2);
\draw [dashed] (0,0) -- (0,-2);

\fill (0,0) circle [radius=2pt];
\node at (0,0) [below] {$f(C)$};

\draw[line width=1.5pt] (30:0.5) -- (30:1.5);
\draw[line width=1.5pt] (150:0.5) -- (150:1.5);
\draw[line width=1.5pt] (30:0.5) arc (0:180:0.43);
\draw[line width=1.5pt] (30:1.5) .. controls (0,3) and (0,3) .. (150:1.5);
\draw[line width=1.5pt] (30:0.5) .. controls (0.43,-2) and (2.88,-2) .. (30:1.5);
\draw[line width=1.5pt] (150:0.5) .. controls (-0.43,-2) and (-2.88,-2) .. (150:1.5);

\fill[gray, opacity=0.5] (30:0.5) -- (30:1.5) .. controls (0,3) and (0,3) .. (150:1.5) -- (150:0.5) arc (180:0:0.43);
\fill[gray, opacity=0.5] (150:1.5) -- (150:0.5) .. controls (-0.43,-2) and (-2.88,-2) .. (150:1.5);
\fill[gray, opacity=0.5] (30:1.5) -- (30:0.5) .. controls (0.43,-2) and (2.88,-2) .. (30:1.5);

\draw[->,>=latex] (3.5,0) -- (30:1);
\draw[->,>=latex] (-3.5,0) -- (150:1);

\draw (-1.2,-0.5) node {$g(B_1)$};
\draw (1.2,-0.5) node {$g(B_2)$};
\draw (0,1.3) node {$g(B_3)$};
\draw (-4.1,0) node {$g(D_1)$};
\draw (4.1,0) node {$g(D_2)$};
\end{tikzpicture}
\caption{The image $g(B)$ for the pair $(C, S)$}
\label{fig:G1}
\end{figure}
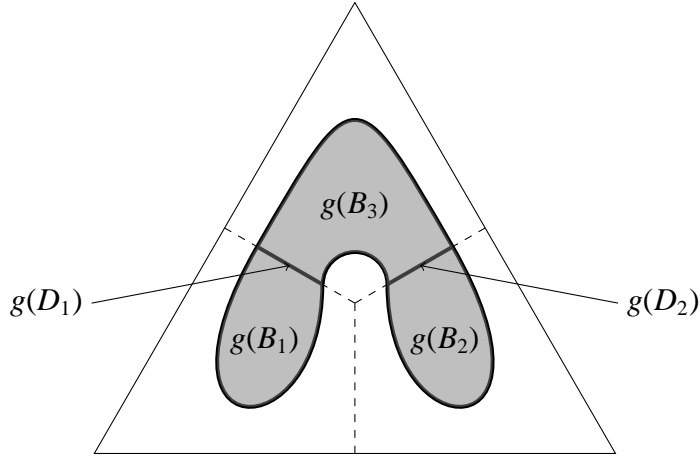
We now define $f' \colon M \to \mathcal{B}^{(2)} / \pi_1(M)$
so that $f'|_{M \setminus B} = f|_{M \setminus B}$ and $f'|_B = g$.
Then $f'^{-1}(Y(\mathcal{B}^{(2)} / \pi_1(M)))$ is another tribranched surface
and has a lower complexity since $l(\Sigma)$ decreases.

Next assume that there exists a pair of components $S$ and $N$
of $S(\Sigma)$ and $M(\Sigma)$ respectively such that the natural inclusion map
between them induces a homomorphism $\pi_1 (S) \to \pi_1 (N)$
which is not injective.
By Dehn's lemma, there exits a compressing disk $D$ of $S$ in $N$.
Take a small neighbourhood $B$ of $D$ which is homeomorphic to a ball
and intersects an annulus in $S$.
Figure~\ref{fig:B2} illustrates a local picture of the neighbourhood $B$.
\begin{figure}[ht]
\centering
\begin{tikzpicture}
\fill [pattern=crosshatch,opacity=.3] (-4,3) .. controls (0,1.25) and (0,1.25) .. (4,3) -- (4,-3) .. controls (0,-1.25) and (0,-1.25) .. (-4,-3) -- (-4,3); 
\draw (-4,3) .. controls (0,1.25) and (0,1.25) .. (4,3);
\draw (-4,-3) .. controls (0,-1.25) and (0,-1.25) .. (4,-3);

\draw [very thick](0,-1.68) arc (-90:90:0.4 and 1.68);
\draw[very thick,dashed] (0,1.68) arc (90:270:0.4 and 1.68);
\draw (-1.5,-1.97) arc (-90:90:0.4 and 1.97);
\draw[dashed] (-1.5,1.97) arc (90:270:0.4 and 1.97);
\draw (1.5,-1.97) arc (-90:90:0.4 and 1.97);
\draw[dashed] (1.5,1.97) arc (90:270:0.4 and 1.97);

\draw (0,0) circle (2.5);

\draw (-1.5,-2.5) -- (-1.5,-3) -- (-0.5,-3);
\draw (0.5,-3) -- (1.5,-3) -- (1.5,-2.5);

\draw (-3,3) node {$S$};
\draw (0,0) node {$D$};
\draw (0,3) node {$B$};
\draw (-1.5,0) node {$D_1$};
\draw (1.5,0) node {$D_2$};
\draw (-2.2,0) node {$B_1$};
\draw (2.2,0) node {$B_2$};
\draw (0,-3) node {$B_3$};
\node [rectangle, fill=white] at (3.5,-2) {$N$};
\end{tikzpicture}
\caption{A neighbourhood $B$ of the compression disk $D$}
\label{fig:B2}
\end{figure}
Choose properly embedded disks $D_1$ and $D_2$ in $B$
bounding the components of the boundary of the annulus.
We construct a map $g \colon B \to \mathcal{B}^{(2)} / \pi_1(M)$
such that $g|_{\partial B} = f|_{\partial B}$
and that $g^{-1}(Y(\mathcal{B}^{(2)} / \pi_1(M))) = D_1 \cup D_2$ as follows.
Since $f(\partial D_1)$ and $f(\partial D_2)$ are contained in the open star
of a vertex in $Y(\mathcal{B}^{(2)} / \pi_1(M))$ which is a barycentre of an edge of $\mathcal{B}$,
the maps $g|_{\partial D_1}=f|_{\partial D_1}$ and $g|_{\partial D_2}=f|_{\partial D_2}$ extend to $D_1$ and $D_2$
respectively so that $g(D_1)$ and $g(D_2)$ are contained in the same star.
The ball $B$ is divided into $3$ balls $B_1$, $B_2$ and $B_3$ by $D_1$ and $D_2$,
where $\partial B_1$ does not contain $D_2$ but contains $D_1$,
$\partial B_2$ does not contain $D_1$ but contains $D_2$,
and $\partial B_3$ contains both disks.
Since $g(\partial B_1 \setminus D_1)$ and $g(\partial B_2 \setminus D_2)$ are contained in the open star of a vertex of $\mathcal{B}^{(2)}$ in its barycentric subdivision (corresponding to $N$), and since $g(\partial B_3 \setminus (D_1 \cup D_2))$ is contained in that of another vertex of $\mathcal{B}^{(2)}$,
we can extend $g|_{\partial B_1}$, $g|_{\partial B_2}$ and $g|_{\partial B_3}$
to $B_1$, $B_2$ and $B_3$ respectively
so that $g(B_1 \setminus D_1)$, $g(B_2 \setminus D_2)$ and $g(B_3 \setminus (D_1 \cup D_2))$ do not intersect $Y(\mathcal{B}^{(2)} / \pi_1(M))$.
Then we see at once that the inverse images of $Y(\mathcal{B}^{(2)} /
 \pi_1(M))$ under the maps $g\vert_{B_1}$, $g|_{B_2}$
and $g|_{B_3}$ are $D_1$, $D_2$ and $D_1\cup D_2$ respectively.
Figure \ref{fig:G2} illustrates the image $g(B)$ of the neighbourhood
 $B$ of the compression disk $D$.
\begin{figure}[ht]
\centering
\begin{tikzpicture}
\draw (0,3) -- (0,1.7);
\draw (0,-3) -- (0,0.3);
\draw (0,3) -- (5.2,0) -- (0,-3) -- (-5.2,0) -- (0,3);

\fill [pattern=crosshatch,opacity=.3]  (1.73,0) -- (-1.73,0) -- (-2.59,-1.5) -- (0,-3) -- (2.59,-1.5) -- cycle;

 \draw [->,>=latex] (2,-2.4) .. controls (1.5,-2.6) and (1.2,-2.6) .. (0.5,-2.2);
 \node at (2,-2.4) [right] {\parbox{8em}{a star \\ corresponding to $N$}};
 
\draw [dashed] (1.73,0) -- (0,0);
\draw [dashed] (1.73,0) -- (2.59,1.5);
\draw [dashed] (1.73,0) -- (2.59,-1.5);
\draw [dashed] (-1.73,0) -- (0,0);
\draw [dashed] (-1.73,0) -- (-2.59,1.5);
\draw [dashed] (-1.73,0) -- (-2.59,-1.5);

\draw[line width=1.5] (0.3,0) -- (1.3,0);
\draw[line width=1.5] (-0.3,0) -- (-1.3,0);
\draw[line width=1.5] (0.3,0) arc (0:180:0.3);
\draw[line width=1.5] (1.3,0) .. controls (0,2.25) and (0,2.25) .. (-1.3,0);
\draw[line width=1.5] (0.3,0) .. controls (0.43,-2.83) and (2.17,-1.5) .. (1.3,0);
\draw[line width=1.5] (-0.3,0) .. controls (-0.43,-2.83) and (-2.17,-1.5) .. (-1.3,0);

\fill[gray, opacity=0.5] (0.3,0) -- (1.3,0) .. controls (0,2.25) and (0,2.25) .. (-1.3,0) -- (-0.3,0) arc (180:0:0.3);
\fill[gray, opacity=0.5] (-1.3,0) -- (-0.3,0) .. controls (-0.43,-2.83) and (-2.17,-1.5) .. (-1.3,0);
\fill[gray, opacity=0.5] (1.3,0) -- (0.3,0) .. controls (0.43,-2.83) and (2.17,-1.5) .. (1.3,0);

\fill (0,-3) circle [radius=2pt];

\draw[->,>=latex] (-2.3,2.7) -- (-0.8,0.05);
\draw[->,>=latex] (2.3,2.7) -- (0.8,0.05);

\draw (-0.98,-0.7) node {$g(B_1)$};
\draw (0.98,-0.7) node {$g(B_2)$};
\draw (0,0.8) node {$g(B_3)$};
\draw (-2.3,3) node {$g(D_1)$};
\draw (2.3,3) node {$g(D_2)$};
\end{tikzpicture}
\caption{The image $g(B)$ for the pair $(S, N)$}
\label{fig:G2}
\end{figure}
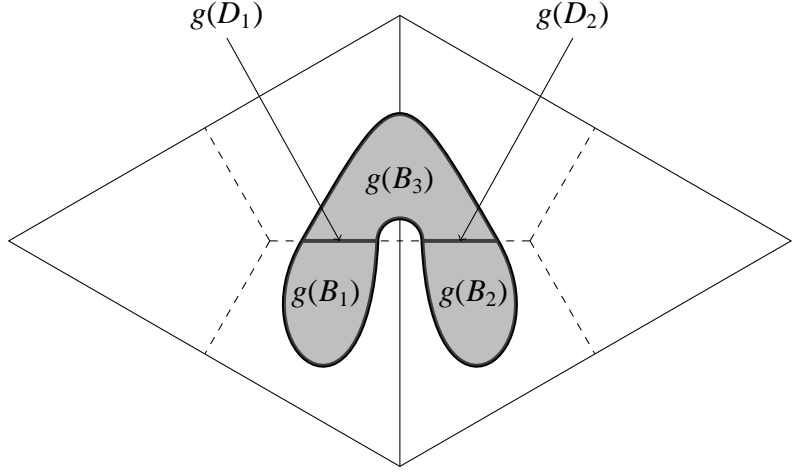
Now we define $f' \colon M \to \mathcal{B}^{(2)} / \pi_1(M)$
so that $f'|_{M \setminus B} = f|_{M \setminus B}$ and $f'|_B = g$.
Set $\Sigma' = f'^{-1}(Y(\mathcal{B}^{(2)} / \pi_1(M)))$,
which is another tribranched surface with the same $l(\Sigma')$ as $l(\Sigma)$.
We show in the followings that $m(\Sigma') $ is strictly less than $m(\Sigma)$,
which implies that $\Sigma'$ has a lower complexity than $\Sigma$.
Set $S' = (S \setminus B) \cup D_1 \cup D_2$.
First suppose that $S'$ is connected.
Then we can calculate as 
\begin{align*}
m(\Sigma) - m(\Sigma') &= (2-\chi(S))^2-(2-\chi(S'))^2 \\
&= 4+4(2-\chi(S')) > 0
\end{align*}
by using $\chi(S') = \chi(S) + 2 \leq 2$.
Next suppose that $S'$ has two components $S_1'$ and $S_2'$.
Note that neither $S_1'$ nor $S_2'$ is a sphere. 
Then we can calculate as
\begin{align*}
m(\Sigma) - m(\Sigma') &= (2-\chi(S))^2-(2-\chi(S_1'))^2-(2-\chi(S_2'))^2 \\
&= 2(2-\chi(S_1'))(2-\chi(S_2')) > 0
\end{align*}
by using $\chi(S_1') + \chi(S_2') = \chi(S) + 2$, $\chi(S_1') < 2$ and
$\chi(S_2') < 2$. In both the cases $m(\Sigma')$ decreases from
$m(\Sigma)$, as desired.

Finally, we consider the case where $\Sigma$ does not satisfy (ETBS3).
Then we see as follows that, after eliminating a component of $\Sigma$
contained in a ball in $M$ or a collar of $\partial M$, the resultant tribranched surface is
also given by the action of $\pi_1(M)$ on $\mathcal{B}$.
If there is a component of $\Sigma$ contained in a ball $B$,
we can construct a map $f' \colon M \to \mathcal{B}^{(2)} / \pi_1(M)$
such that $f'|_{M \setminus B} = f|_{M \setminus B}$
and that $f'(B)$ does not intersect $Y(\mathcal{B}^{(2)} / \pi_1(M))$, because  $f(\partial B)$ is contained in a contractible component
of the complement of $Y(\mathcal{B}^{(2)} / \pi_1(M))$ in $\mathcal{B}^{(2)} / \pi_1(M)$.
If there is one contained in a collar of $\partial M$,
we set $f' \colon M \to \mathcal{B}^{(2)} / \pi_1(M)$ to be 
the composition of a deformation retraction
from $M$ to the complement of the collar with the restriction of $f$ to it.
In both the cases the complexity of a new tribranched surface
 defined as
$f'^{-1}(Y(\mathcal{B}^{(2)} / \pi_1(M)))$
is lower than the original one's,
since $l(\Sigma)$ and $m(\Sigma)$ do not increase and $n(\Sigma)$ decreases.
The proof is now completed.
\end{proof}

\begin{rem}
Since a tribranched surface of minimal complexity given
by the action of $\pi_1(M)$ on a Euclidean building is not necessarily unique,
the construction of an essential tribranched surface in the proof
is far from being canonical.
\end{rem}

Now let us return to the settings in Section~\ref{ssc:NAB}. 
Let $\tilde{x}$ be an ideal point of a curve $C$ in $X_n(M)$
and let $\tilde{y}$ be a lift of $\tilde{x}$, which is an ideal point of
a lift $D$ of $C$.
We say that {\em $\tilde{x}$ gives an tribranched surface $\Sigma$}
if the associated action of $\pi_1(M)$ on $\mathcal{B}_{n, \widetilde{D}, \tilde{y}}$
gives $\Sigma$.
The following is the main theorem of this article,
which is now a direct consequence
of Corollary~\ref{cor:nontrivial} and Theorem~\ref{thm:BT}.

\begin{thm} \label{thm:main}
Let $n$ be a natural number greater than or equal to $3$, 
and assume that the boundary $\partial M$ of $M$ is non-empty when 
$n$ is strictly greater than $3$.
Then an ideal point of an affine algebraic curve in $X_n(M)$ 
gives an essential tribranched surface in $M$. 
\end{thm}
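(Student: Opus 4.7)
The plan is to combine the two technical cornerstones already established in Sections \ref{ssc:NAB} and \ref{ssc:construction}, namely Corollary \ref{cor:nontrivial} and Theorem \ref{thm:BT}, with only a minimal amount of additional bookkeeping. Starting from an ideal point $\tilde{x}$ of an affine curve $C \subset X_n(M)$, the first step is to produce a lift $D$ of $C$ to $R_n(\pi_1(M))$ and to pick an ideal point $\tilde{y}$ of the smooth projective model $\widetilde{D}$ lying above $\tilde{x}$ under the induced morphism $\mathrm{pr}_n|_D^{\sim}\colon \widetilde{D} \to \widetilde{C}$. Since $\mathrm{pr}_n|_D^{\sim}$ is surjective and sends ideal points to ideal points, such a $\tilde{y}$ exists.

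Next I would invoke the construction at the beginning of Section \ref{ssc:NAB} to associate to $(\widetilde{D},\tilde{y})$ the Bruhat-Tits building $\mathcal{B}_{n,\widetilde{D},\tilde{y}} = \mathcal{B}(SL(n)_{/(\mathbb{C}(D), w_{\tilde{y}})})$, on which $\pi_1(M)$ acts via the tautological representation $\rho_{\widetilde{D}}\colon \pi_1(M) \to SL_n(\mathbb{C}(D))$ composed with the canonical strictly transitive action of $SL_n(\mathbb{C}(D))$. Two observations are essential here: first, $\mathcal{B}_{n,\widetilde{D},\tilde{y}}$ is a Euclidean building of dimension $n-1$ by the description in Section \ref{ssc:BT1}; second, the canonical $SL_n(\mathbb{C}(D))$-action is type-preserving (as emphasised in Section \ref{ssc:BT1}, where the type function $\tau(v) = (w_{\tilde{y}}(\det g_v) \bmod n)$ is explicitly shown to be invariant under $SL_n$), so the pulled-back action of $\pi_1(M)$ is type-preserving as well.

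The nontriviality of the action is exactly the content of Corollary \ref{cor:nontrivial}: if $\pi_1(M)$ had a global fixed vertex, Theorem \ref{thm:FT} would force every trace function $I_\gamma$ to be holomorphic at $\tilde{x}$, contradicting the fact (via Procesi's Theorem \ref{thm:Procesi}) that at least one affine coordinate of $C$ must have a pole at the ideal point $\tilde{x}$. With this nontrivial, type-preserving action of $\pi_1(M)$ on a Euclidean building of dimension $n-1$ in hand, the hypotheses of Theorem \ref{thm:BT} are met (using the assumption $\partial M \neq \emptyset$ when $n > 3$), and Theorem \ref{thm:BT} directly produces an essential tribranched surface $\Sigma \subset M$, obtained as $\Sigma = f^{-1}(Y(\mathcal{B}_{n,\widetilde{D},\tilde{y}}^{(2)}/\pi_1(M)))$ for a suitable map $f\colon M \to \mathcal{B}_{n,\widetilde{D},\tilde{y}}/\pi_1(M)$.

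Since the heavy lifting has been packaged into Corollary \ref{cor:nontrivial} and Theorem \ref{thm:BT}, the proof of the main theorem is really just a matter of assembling these ingredients; there is no genuine obstacle beyond checking the dimensional and boundary hypotheses match up. The only point worth emphasising in the write-up is the definitional unwinding that the action produced by $\tilde{x}$ via Corollary \ref{cor:nontrivial} is the same one whose target building feeds into Theorem \ref{thm:BT}, which is immediate from the construction of $\mathcal{B}_{n,\widetilde{D},\tilde{y}}$ and the definition of an ideal point giving a tribranched surface.
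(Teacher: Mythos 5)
Your proposal is correct and follows exactly the route the paper takes: the paper states Theorem~\ref{thm:main} as a direct consequence of Corollary~\ref{cor:nontrivial} (nontriviality of the action at an ideal point) and Theorem~\ref{thm:BT} (construction of an essential tribranched surface from a nontrivial type-preserving action on a Euclidean building of dimension $n-1$), which is precisely the assembly you describe. Your additional bookkeeping (lifting $C$ to $D$, choosing $\tilde{y}$ over $\tilde{x}$, checking the dimension and the type-preserving property) is the same definitional unwinding implicit in the paper's one-line deduction.
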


\section{An application to small Seifert manifolds} \label{sc:A}

One of great advantages of extending Culler--Shalen theory to higher dimensional
representations is that we may apply our extended theory 
also to a {\em non-Haken}
$3$-manifold, that is, a $3$-manifold which does not contain any essential surfaces.
Here we describe an application of Theorem \ref{thm:main}
to a class of $3$-manifolds called {\em small Seifert manifolds}, which
contain non-Haken $3$-manifolds. We remark that all the homology
groups appearing in this section are singular homology groups.

A {\em Seifert manifold} is a compact, orientable $3$-manifold
admitting the structure of a Seifert fibred space
whose base orbifold is a compact surface with cone points.
A \textit{small Seifert manifold} is a Seifert manifold with at most $3$ singular fibres.
We refer the reader to \cite[Chapter IV]{Jaco}
for details on Seifert manifolds.

Let $p$, $q$ and $r$ be natural numbers greater than or equal to $3$. 
We denote by $S^2(p,q,r)$ the $2$-sphere with three cone points
whose cone angles are $2 \pi / p$, $2 \pi / q$ and $2 \pi / r$ respectively, and 
consider a small Seifert manifold $M$ with the base orbifold  $S^2(p, q, r)$.
Such a $3$-manifold is known to be irreducible, and it is Haken
if and only if its first homology group $H_1(M,\Z)$ is infinite.
The fundamental group $\pi_1(M)$ has a presentation of the form
\[ \langle x, y, h \mid h \text{: central}, x^p = h^a, y^q = h^b, (xy)^r = h^c \rangle \]
for certain integers $a, b, c$ satisfying $(a, p) = (b, q) = (c, r) = 1.$
The orbifold fundamental group $\pi_1^\mathrm{orb} (S^2(p, q, r))$ of
$S^2(p,q,r)$ is isomorphic
to the {\em ordinary triangle group} (or the {\em von Dyck group}) $\Delta(p, q, r)$ defined as
\[ \langle x, y \mid x^p = y^q = (xy)^r = 1 \rangle, \]
and by identifying $\pi_1^\mathrm{orb}(S^2(p,q,r))$ with $\Delta(p,q,r)$, 
we may regard the natural homomorphism $\pi_1(M) \to \pi_1^\mathrm{orb} (S^2(p, q, r))$
induced by the projection $M\rightarrow S^2(p,q,r)$ as the group homomorphism which 
maps $x$ and $y$ identically and sends $h$ to the unit (in particular it
is a surjection). 
It is easy to see that the first homology group $H_1(M,\Z)$ is infinite
if and only if the equality
\[ \frac{a}{p} + \frac{b}{q} = \frac{c}{r} \]
holds. From this observation we thus find that $M$ tends to be non-Haken in
most cases.

In the case where $M$ is Haken, 
we may readily construct an affine curve in $X_3(M)$ consisting 
of abelian characters because the first homology group 
$H_1(M,\Z)$ is infinite. In the following we verify that 
$X_3(M)$ contains an affine curve 
also in the case where $M$ is non-Haken.
It thus follows from Theorem~\ref{thm:main} that an ideal point
of the curve gives an essential tribranched surface $\Sigma$ contained
in $M$, which one can never obtain by utilising classical Culler--Shalen theory
(since the $\mathrm{SL}_2(\C)$-character variety $X_2(M)$ is of dimension $0$ in
the case).

We may regard the group $\Delta(p, q, r)$ as a subgroup of index $2$
of the {\em Schwartzian triangle group} $\Gamma(p, q, r)$ defined as
\[  \langle a, b, c  \mid a^2 = b^2 = c^2 = (ab)^p = (bc)^q = (ca)^r = 1 \rangle, \]
identifying $x$ with $ab$ and $y$ with $bc$
respectively.
It follows from the argument in \cite[Section 6]{Goldman}
that there exists a family of $\mathrm{SL}_3(\C)$-representations 
$\rho_s \colon \Gamma(p, q, r) \to \mathrm{SL}_3(\C)$ with complex parameter~$s$ defined by
\begin{align*}
\rho_s(a) &=
\begin{pmatrix}
1 & 0 & 0 \\
-2 s \cos \frac{\pi}{p} & -1 & 0 \\
-2 \cos \frac{\pi}{r} & 0 & -1
\end{pmatrix}, \\
\rho_s(b) &=
\begin{pmatrix}
-1 & -2 s^{-1} \cos \frac{\pi}{p} & 0 \\
0 & 1 & 0 \\
0 & -2 \cos \frac{\pi}{q} & -1
\end{pmatrix}, \\
\rho_s(c) &=
\begin{pmatrix}
-1 & 0 & -2 \cos \frac{\pi}{r} \\
0 & -1 & -2 \cos \frac{\pi}{q} \\
0 & 0 & 1
\end{pmatrix}
\end{align*}
(the representations above are minor modifications of the ones introduced in \cite{Goldman},
where $\cos \frac{\pi}{p}$, $\cos \frac{\pi}{q}$ and $\cos \frac{\pi}{r}$ are
replaced by
$\cos \frac{2 \pi}{p}$, $\cos \frac{2 \pi}{q}$ and $\cos \frac{2 \pi}{r}$
respectively in the matrices).
A simple computation enables us to obtain the equation
\[ \tr \rho_s(abac) = 8 (s + s^{-1}) \cos \frac{\pi}{p} \cos \frac{\pi}{q} \cos \frac{\pi}{r} + 16 \cos^2 \frac{\pi}{p} \cos^2 \frac{\pi}{r} + 4 \cos^2 \frac{\pi}{q} - 1, \]
which shows that the restrictions of $\tr \rho_s $ to $\Delta(p, q, r)$ define
a nontrivial  affine curve contained in $X_3(\Delta(p, q, r))$.
Since the natural homomorphism $\pi_1(M) \to \pi_1^\mathrm{orb} (S^2(p, q, r))$ is surjective,
the morphism $X_3(\pi_1^\mathrm{orb} (S^2(p, q, r))) \to X_3(M)$
induced on the character varieties is an embedding.
Therefore one readily sees that, by identifying $X_3(\Delta(p,q,r))$
with $X_3(\pi_1^\mathrm{orb}(S^2(p,q,r)))$, the character variety
$X_3(M)$ also contains a nontrivial curve.

\section{Questions} \label{sc:Q}

We conclude with a list of questions.
Let $M$ be a compact, connected, irreducible and orientable $3$-manifold.
It is known by Boyer and Zhang \cite{BZ}, Motegi \cite{Motegi}, and
Schanuel and Zhang \cite{SZ} that there exists an essential surface not
given by any ideal points of any affine curves in $X_2(M)$ for a certain
$3$-manifold $M$.
We may now propose the following important question:

\begin{q} \label{q:detection}
Does there exist an essential surface $($without branched points$)$ 
not given by any ideal points of any affine curves in $X_2(M)$ but given 
by an ideal point of an affine curve in $X_n(M)$ for $n \geq 3$?
\end{q}

Here we remark that an essential surface (without any branched points) is 
also an essential tribranched surface in our terminology. [Note: as we have mentioned at the end of Section~\ref{sc:introduction}, (a much stronger form of) Question~\ref{q:detection} has been already solved affirmatively in \cite{FKN2}.]

\medskip

The next question concerns strongly essential tribranched surfaces, which we have defined and discussed in Subsection \ref{ssc:SETBS}.
\begin{q}
Under the same assumption as Theorem~$\ref{thm:main}$, is a strongly
 essential tribranched surface in $M$ also given by an ideal point of an
 affine curve in $X_n(M)$?
\end{q}

Now recall that we have imposed a little too strong assumption on the boundary of the 3-manifold $M$ under consideration in the proof of Theorem~\ref{thm:main}; namely we have assumed there that the boundary of $M$ is not empty when $n$ is strictly greater than $3$.
\begin{q}
Does the same conclusion as Theorem~$\ref{thm:main}$ hold without the
 assumption that the boundary $\partial M$ is non-empty when $n$ is
 strictly greater than $3$?   
\end{q}

Next let $M$ be a small Seifert manifold whose base orbifold is 
$S^2(p,q,r)$ with $p,q,r\geq 3$ (recall the definitions from
Section~\ref{sc:A}). 
Let us consider a theta graph $\Theta$ in $S^2(p, q, r)$, 
which has 2 vertices and 3 edges connecting them, 
so that all the cone points are separated by $\Theta$; see Figure~\ref{fig:Theta}. 
Then it is straightforward to see that the preimage $\Sigma_0$ of
$\Theta$ in $M$ under the projection $M\to S^2(p,q,r)$ 
is an essential tribranched surface, which seems to be a ``simplest'' one contained in $M$.

\begin{figure}[ht]
\centering
\begin{tikzpicture}

\draw [very thick] plot [domain=0:360,smooth] ({sqrt(16/3)*cos(\x)+sqrt(2/3)*sin(\x)},{sqrt(6)*sin(\x)});

\coordinate (A) at ({sqrt(2)},{-sqrt(2)});
\coordinate (B) at (-{sqrt(2)},{sqrt(2)});
\fill (A) circle [radius=3pt];
\fill (B) circle [radius=3pt];
\draw [very thick] (A) -- (B);

\coordinate (P) at (1,{sqrt(2.5)}) node at (P) {$\otimes$};
\node at (P) [right] {$\dfrac{2\pi}{p}$};
\coordinate (Q) at (-1.3,-{sqrt(1.8)}) node at (Q) {$\otimes$};
\node at (Q) [right] {$\dfrac{2\pi}{q}$};
\coordinate (R) at (3,-1) node at (R) {$\otimes$};
\node at (R) [right] {$\dfrac{2\pi}{r}$};
\end{tikzpicture}
\caption{A theta graph $\Theta$ separating the three cone points of $S^2(p,q,r)$}
\label{fig:Theta}
\end{figure}

\begin{q}
Is an essential tribranched surface, which is  given by an ideal point of
 the nontrivial curve considered in Section~$\ref{sc:A}$, isotopic to $\Sigma_0$?
\end{q}

The final question is concerning the characterisation of the class of $3$-manifolds containing essential tribranched surfaces.

\begin{q} \label{q:characterisation}
Does every aspherical $3$-manifold contain any essential tribranched surfaces or any strongly essential tribranched surfaces?
\end{q}


\end{document}